\theoremstyle{plain}
 \newtheorem{thm}{Theorem}[section]
 \newtheorem{cor}[thm]{Corollary}
\newtheorem{lem}[thm]{Lemma}
\newtheorem{prop}[thm]{Proposition}
\theoremstyle{definition}
\newtheorem{rmk}[thm]{Remark}
\theoremstyle{plain}
\newtheorem{theorem}[thm]{Theorem}
\newtheorem{lemma}[thm]{Lemma}
\newtheorem{corollary}[thm]{Corollary}
\newtheorem{proposition}[thm]{Proposition}
\newtheorem{remark}[thm]{Remark}
\newtheorem{observation}[thm]{Observation}
\theoremstyle{definition}
\newtheorem{criterion}[thm]{Criterion}
\newtheorem{defin}[thm]{Definition}
\newtheorem{facts}[thm]{Facts}
\numberwithin{equation}{section}
\newcommand{\sB}{{\mathcal B}}
\newcommand{\sC}{{\mathcal C}}
\newcommand{\sG}{{\mathcal G}}
\newcommand{\sH}{{\mathcal H}}
\newcommand{\sL}{{\mathcal L}}
\newcommand{\sN}{{\mathcal N}}
\newcommand{\sO}{{\mathcal O}}
\newcommand{\NN}{\ensuremath{\mathbb{N}}}
\newcommand{\hol}{\ensuremath{\mathcal{O}}}
\newcommand\om{\omega}
\newcommand\la{\lambda}
\newcommand\Lam{\Lambda}
\newcommand\s{\sigma}
\newcommand\e{\epsilon}
\newcommand\al{\alpha}
\newcommand\be{\beta}
\newcommand\Ga{\Gamma}
\newcommand\De{\Delta}
\newcommand\ga{\gamma}
\newcommand\de{\delta}
\newenvironment{dedication}
        {\begin{quotation}\begin{center}\begin{em}}
        {\par\end{em}\end{center}\end{quotation}}
\DeclareMathOperator{\Pic}{Pic}
\DeclareMathOperator{\Alb}{Alb}
\DeclareMathOperator{\Tors}{Tors}
\newcommand{\HH}{\ensuremath{\mathbb{H}}}
\newcommand{\ra}{\ensuremath{\rightarrow}}
\newcommand{\CC}{\mathbb{C}}
\newcommand{\PP}{\mathbb{P}}
\newcommand{\QQ}{\mathbb{Q}}
\newcommand{\ZZ}{\mathbb{Z}}
\newcommand{\Aut}{\mathrm{Aut}}
\newcommand{\Diff}{\mathrm{Diff}}
\newcommand{\id}{\mathrm{id}}
\newcommand{\Inn}{\mathrm{Inn}}
\newcommand{\Ker}{\mathrm{Ker}}
\newcommand{\Out}{\mathrm{Out}}
\newcommand{\red}{\mathrm{red}}
\newcommand{\tor}{\mathrm{tor}}
 \lstdefinelanguage{Magma}%
  {%
   otherkeywords={:=,+:=,-:=,*:=},%
   procnamekeys={function,func,intrinsic,procedure,proc,return},%
   morekeywords={true,false},%
   morekeywords=[2]{adj,and,cat,cmpeq,cmpne,diff,div,eq,ge,gt,in,is,join,le,lt,%
          meet,mod,ne,notadj,notin,notsubset,or,sdiff,subset,xor},%
   morekeywords=[3]{assigned,break,by,case,catch,continue,declare,default,%
          delete,do,elif,else,end,eval,exists,exit,for,forall,fprintf,if,local,%
          not,print,printf,quit,random,read,readi,repeat,restore,save,select,%
          then,time,to,try,until,vprint,vprintf,vtime,when,where,while},%
   morekeywords=[4]{clear,forward,freeze,iload,import,load},%
   morekeywords=[5]{assert,assert2,assert3,error,require,requirege,requirerange},%
   morekeywords=[6]{car,comp,cop,elt,ext,frac,hom,ideal,iso,lideal,loc,map,%
          ncl,pmap,quo,rec,recformat,rep,rideal,sub},%
      sensitive,%
      morecomment=[l]//,%
      morecomment=[s]{/*}{*/},%
      morestring=[b]"%
  }[keywords,procnames,comments,strings]%
\begin{document}
\title[
 Cohomologically
trivial automorphisms of elliptic surfaces-I]{ 
 On the
cohomologically trivial automorphisms of  elliptic surfaces  I: $\chi(S)=0$. }
\author[F. Catanese]{Fabrizio Catanese}
\author[D. Frapporti]{Davide Frapporti}
\author[C. Glei\ss ner]{Christian Glei\ss ner}
\author[W. Liu]{Wenfei Liu}
\author[M. Sch\"utt]{Matthias Sch\"utt}
\makeatletter
\let\@wraptoccontribs\wraptoccontribs
\makeatother

\date{\today}

\address{Lehrstuhl Mathematik VIII, Mathematisches Institut der Universit\"{a}t
Bayreuth, NW II, Universit\"{a}tsstr. 30,
95447 Bayreuth, Germany, and Korea Institute for Advanced Study, Hoegiro 87, Seoul, 
133-722, Korea}
\email{Fabrizio.Catanese@uni-bayreuth.de}
\address{Politecnico di Milano, Dipartimento di Matematica, via Bonardi 9,
20133 Milano, Italy}
\email{davide.frapporti@polimi.it}
\address{Lehrstuhl Mathematik VIII, Mathematisches Institut der Universit\"{a}t
Bayreuth, NW II, Universit\"{a}tsstr. 30,
95447 Bayreuth, Germany}
\email{Christian.Gleissner@uni-bayreuth.de}
\address{School of Mathematical Sciences, Xiamen University, Siming South Road 422, Xiamen, Fujian Province, P.R. China}
\email{wliu@xmu.edu.cn}
\address{Institut f\"ur Algebraische Geometrie, Leibniz Universit\"at
  Hannover, Welfengarten 1, 30167 Hannover, Germany\\ and\;\;\;\;\;\;\;\;\;\;\;\;\;\;\;\;\;\;\;\;\;\;\;\;\;\;\;\;\;\;\;\;\;\;\;\;\;
  \linebreak
  Riemann Center for Geometry and Physics, Leibniz Universit\"at
  Hannover, Appelstrasse 2, 30167 Hannover, Germany}
\email{schuett@math.uni-hannover.de}

\keywords{Compact K\"ahler manifolds, algebraic surfaces, automorphisms, cohomologically trivial automorphisms, topologically trivial automorphisms, Enriques--Kodaira classification}
\subjclass[2010]{14J50, 14J80,  14J27, 14H30, 14F99, 32L05, 32M99, 32Q15, 32Q55 }
\begin{abstract}
 In this first part we  describe  the group $\Aut_{\ZZ}(S)$ of  cohomologically trivial automorphisms of a properly elliptic surface (a minimal surface $S$ with Kodaira dimension $\kappa(S)=1$),   in the initial   case  $ \chi(\sO_S) =0$.

In particular,   in the   case where $\Aut_{\ZZ}(S)$ is finite,  we give   the  upper bound 4 for its cardinality, showing 
more precisely that  if $\Aut_{\ZZ}(S)$
is nontrivial, it is 
 one of the following groups: $\ZZ/2, \ZZ/3, (\ZZ/2)^2$. We also show with easy examples that the groups
$\ZZ/2, \ZZ/3$ do effectively occur.  

  Respectively,  in  the case where $\Aut_{\ZZ}(S)$ is infinite, 
we give the sharp upper bound 2  for the number  of its  connected components.

\end{abstract}
\maketitle

\begin{dedication}
Dedicated to Yurii  (Gennadievich) Prokhorov on the occasion of his 60th birthday.
\end{dedication}

\tableofcontents
\vspace{-1cm}
\section{Introduction}

Let $X$ be a 
 compact connected
 complex manifold. Bochner and Montgomery \cite{bm1, bm2} showed that the automorphism group $\Aut(X)$
(the group of biholomorphic maps
$ g\colon X \ra X$, i.e., the group of  diffeomorphisms  
$ g \in \Diff (X)$
which preserve the complex structure of $X$) 
is a finite dimensional complex Lie Group, possibly with infinitely many connected components, whose Lie Algebra is the space
$H^0(X,  \Theta_X)$ of holomorphic vector fields on $X$.

 $\Aut^0(X)$ denotes  the connected  component of the identity in $\Aut(X)$,
 and an important   object of interest is the group of components $ \Aut(X) / \Aut^0(X)$,
 which is in general at most countable.

For applications to Hodge Theory and to period mappings  quite important is the study of some larger subgroups,
the group of numerically trivial automorphisms\footnote{The reader should be aware that  some  
authors called the numerically trivial automorphisms   {\bf automorphisms acting trivially on cohomology} 
without specifying that they meant cohomology with rational (and not integral) coefficients.}  
\[
\Aut_\QQ(X):=\{\sigma\in\Aut(X) \mid \sigma \text{ induces the trivial action on }H^*(X, \QQ)\},
\]
and the group of cohomologically trivial automorphisms 
\[
\Aut_\ZZ(X):=\{\sigma\in\Aut(X) \mid \sigma \text{ induces the trivial action on }H^*(X, \ZZ)\},
\]
in  
which we are especially interested in this article.

 It is an easy but important remark to observe that $\Aut_\ZZ(X) = \Aut_\QQ(X)$ if 
$H^*(X, \ZZ)$ has no torsion. In the case where $dim(X)=2$, this amounts by Poincar\'e duality 
(see Remark \ref{poincare}) to the condition   that $H_1(X, \ZZ)$ 
 be a torsion free abelian group.

The case where $X$ is a cKM = compact K\"ahler Manifold, endowed with a K\"ahler metric $\om$,  was
considered around 1978 by Lieberman \cite{Li78} and Fujiki \cite{fujiki}, in particular Lieberman \cite{Li78} proved
that $\Aut_\QQ(X) / \Aut^0(X)$ is a finite group. 

For surfaces $S$ not of general type\footnote{For those of general type $\Aut(S)$ is a finite group and  
$|\Aut_\QQ(S) |$ is universally bounded. 
In fact, $|\Aut_\QQ(S)|\leq 4$ if $\chi(\sO_S)\geq 189$ (\cite{Cai04}). On the other hand, minimal surfaces of general type with $\chi(\sO_S)<189$ form a bounded family, and hence their full automorphism groups have a uniform bound.
 It is an open question whether there are surfaces of general type with $|\Aut_\QQ(S)| > 25.$
}
the aim is to describe the group of numerically trivial automorphisms,
this was begun in 1972 by   Pyatetskij-Shapiro and  Shafarevich \cite{ps-shaf} (see also Burns and Rapoport \cite{br}) who proved that, for a K3 surface $S$, $\Aut_\QQ(S) $ is a trivial group.
 Peters \cite{Pe79}, \cite{Pe80} began the study of $\Aut_\QQ(S) $ for compact K\"ahler surfaces. 
Automorphisms of surfaces were also investigated by Ueno \cite{ueno} and  Maruyama \cite{Ma71}
in the 70's, then Barth and Peters \cite{barth-peters},  Mukai and Namikawa \cite{MN84}, 
 Jin-Xing Cai
\cite{Cai09} and Mukai \cite{mukai}.

We are considering the   case
where $S$ is minimal and the Kodaira dimension 
 $\kappa(S) =1$
 (these are the so-called properly elliptic surfaces).
 
For these, we have a pluricanonical elliptic fibration $ \Phi : S \ra B$, and since $K^2_S=0$,
by the Noether formula the basic invariant is  $\chi(S) : = p_g(S) - q(S) + 1$
such that $\chi(S) \geq 0$ (observe that the topological Euler number $e(S)$ is then determined by
Noether's formula, asserting that $ 12 \chi(S) = e(S)$).

 A first easy but  important tool is to observe that the group $Aut(S)$ preserves this elliptic fibration.
 
 Our aim is to give upper bounds for the cardinality of the group $\Aut_\ZZ(S)$  (respectively  $\Aut_\ZZ(S) / Aut^0(S)$
 if $S$ admits vector fields) in terms of $\chi(S)$,
 hence in this first part we begin with the initial case $\chi(S)=0$.

 If $\chi(S)=0$ the surfaces, if minimal,  are isogenous to a higher  elliptic  product, in the sense of
 \cite{isogenous} and  \cite[Chapter I]{time}, where  the following  fact is proven:
  \footnote{ One says instead, see  \cite{isogenous}, that $S$ is isogenous to a higher product if 
  $S$ is isogenous to the product $C_1 \times C_2$
  of two curves of respective genera at least $2$.}

\begin{lem}\label{lem: k1 str}
Let  $S$ be a minimal surface with  Kodaira dimension $\kappa(S)=1$ and with $\chi(\sO_S)=0$.

 Then 
 $S$ is isogenous to a higher  elliptic product, meaning that:
\begin{enumerate}[leftmargin=*]
    \item $S=(C\times E)/\Delta_G$, where 
  $C$
 and $E$ are smooth curves with ${\rm genus}(C)\geq 2$ and ${\rm genus}(E)=1$, and $G$ is a finite group acting faithfully on $C$ and $E$ such that the diagonal $\Delta_G\subset G\times G$ acts freely on $C\times E$. 
    \item   Moreover     
    \begin{itemize}
    \item[(I)] either  $G$ acts on $E$ by translations,  equivalently
    $$p_g(S)={\rm genus}(C/G) =q(S)-1 \Leftrightarrow  {\rm genus}(E/G)=1 \Leftrightarrow Aut^0(S) \neq \{Id_S\}$$ 
      \item[(II)]
    or  $G$ does not act on $E$ only by translations,   equivalently
    $$p_g(S)=q(S)-1 ={\rm genus}(C/G)-1 \Leftrightarrow {\rm genus}(E/G)=0 \Leftrightarrow Aut^0(S)=  \{Id_S\}.$$
    \end{itemize}
    \item
    The fibres of $f : S= (C\times E)/\Delta_G \ra B = C/G$ are smooth elliptic or multiples of
    a smooth elliptic curves, that is $f$ is an elliptic quasi-bundle.

\end{enumerate}

\end{lem}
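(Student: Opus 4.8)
\emph{The plan} is to control the singular fibres of the canonical elliptic fibration through the Euler number, upgrade the resulting isotriviality to a global product-quotient presentation, and then read off every invariant from the $G$-action on the two factors.

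First I would record that minimality together with $\kappa(S)=1$ gives $K_S^2=0$, so Noether's formula $12\chi(\sO_S)=K_S^2+e(S)$ and the hypothesis $\chi(\sO_S)=0$ force the topological Euler number $e(S)=0$. Applying this to the canonical elliptic fibration $\Phi\colon S\to B$, and using that $e(S)$ is the sum over $b\in B$ of the non-negative local contributions of the fibres $\Phi^{-1}(b)$, I would invoke Kodaira's list of singular fibre types: every type other than $I_0$ and its multiples $mI_0$ contributes a strictly positive amount. Hence $e(S)=0$ is equivalent to the statement that every fibre is a smooth elliptic curve or a multiple of one, i.e.\ that $\Phi$ is an elliptic quasi-bundle; this is (3). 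In particular there is no fibre of bad reduction, so the $j$-invariant, as a morphism $B\to\mathbb{P}^1$, omits $\infty$ and is therefore constant, and $\Phi$ is isotrivial with a fixed general fibre $E$ of genus $1$.

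Next I would feed this isotrivial quasi-bundle into the structure theory of isotrivial fibrations (see \cite{isogenous} and \cite[Chapter I]{time}): after a suitable finite Galois base change $C\to B$ with group $G$ the pulled-back family becomes trivial, yielding $S=(C\times E)/\Delta_G$ with $G$ acting faithfully on each factor, $B=C/G$, and $\Delta_G$ acting freely on $C\times E$ (freeness being forced by smoothness of the minimal surface $S$). The multiple fibres occur precisely over the branch points of $C\to C/G$: a stabiliser there must act freely on $E$, hence by a nontrivial translation, producing exactly an $mI_0$ fibre in accordance with (3). Since $C\times E\to S$ is \'etale one has $\kappa(S)=\kappa(C\times E)={\rm genus}(C)$-dependent, namely $\kappa(S)=\kappa(C)$, so $\kappa(S)=1$ is equivalent to ${\rm genus}(C)\geq 2$; this proves (1). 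Finally I would compute the invariants as $G$-invariants on the product. Writing the action of $g$ on $E$ as $z\mapsto \zeta_g z+t_g$, the group $G$ acts on the lines $H^0(\Omega^1_E)$ and $H^0(\Theta_E)$ through the characters $g\mapsto\zeta_g$ and $g\mapsto\zeta_g^{-1}$. By K\"unneth and ${\rm genus}(C)\geq 2$ one gets $H^0(S,\Theta_S)=H^0(\Theta_E)^G$ and $H^0(S,\Omega^1_S)=H^0(\Omega^1_C)^G\oplus H^0(\Omega^1_E)^G$, whence $q(S)={\rm genus}(C/G)+\dim H^0(\Omega^1_E)^G$. Since $\Aut^0(S)$ has Lie algebra $H^0(S,\Theta_S)$, the four conditions ``$G$ acts on $E$ only by translations'', $H^0(\Omega^1_E)^G\neq 0$, ${\rm genus}(E/G)=1$, and $\Aut^0(S)\neq\{\id_S\}$ are all equivalent to the vanishing of every $\zeta_g$. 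In that case (I) one has $q(S)={\rm genus}(C/G)+1$, and as $\chi(\sO_S)=0$ forces $p_g(S)=q(S)-1$ throughout, $p_g(S)={\rm genus}(C/G)$; moreover $G$ then acts freely on $E$ by translations, so $E/G$ is again elliptic. In the complementary case (II) the rotation part is nontrivial, $q(S)={\rm genus}(C/G)$, $p_g(S)={\rm genus}(C/G)-1$, and $E/G\cong\mathbb{P}^1$ has genus $0$. As (I) and (II) are exact negations of one another, verifying one chain of equivalences yields the other, establishing (2).

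The main obstacle is the structural step: passing from isotriviality to a global free diagonal quotient $(C\times E)/\Delta_G$, rather than a mere differentiable fibre bundle. This requires trivialising the monodromy by the Galois base change and identifying smoothness of $S$ with freeness of the diagonal action, matching each multiple fibre to a translation-stabiliser on $E$. Once this is in place the remaining cohomological computations are routine representation theory of $G$ on the spaces of holomorphic one-forms, and the numerical constraint (for instance that ${\rm genus}(C/G)\geq 1$ in case (II)) follows automatically from $p_g(S)\geq 0$ and $\chi(\sO_S)=0$.
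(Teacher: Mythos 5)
Your proposal is correct, and it follows essentially the same route as the proof the paper relies on: the paper itself gives no argument for this lemma, deferring to \cite{isogenous} and \cite[Chapter I]{time}, where the proof is exactly your chain (Noether's formula forces $e(S)=0$, Kodaira's fibre classification then gives the quasi-bundle property (3), constant $j$-invariant gives isotriviality, the Serrano--Catanese structure theorem produces $S=(C\times E)/\Delta_G$ with free diagonal action, and the invariants in (2) follow from K\"unneth and the character of $G$ on $H^0(\Omega^1_E)$ and $H^0(\Theta_E)$). The only point to flag is cosmetic: faithfulness of $G$ on each factor is not automatic from the structure theorem but is arranged by passing to the minimal realization, as in Remark \ref{faithful} of the paper.
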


  In case (I) of the previous Lemma one says that $S$ is  a {\bf pseudo-elliptic surface}.

If $S=(C\times E)/\Delta_G$, and the action is free, one can assume without loss of generality that $G$ acts faithfully
on $C$ and on $E$, going to a minimal realization (see Remark \ref{faithful}).

 The key problem now is whether $ \Aut_{\ZZ}(S) = \Aut^0(S)$.

 Our main results, summarizing and reformulating as follows Theorems \ref{pseudo-elliptic} and  \ref{lem: act trivial 1}, 
 correct cases (ii) and (iii)  from 
 \cite{Pe80}
which correspond to cases (II) and (I) of the previous Lemma \footnote{  In case (I) the flaw is an unproven assertion  in the proof of Proposition 5.3,
when treating case (d) on page 265, lines 7- 10. In case (II) the analysis is more delicate, as shown in this paper. But, roughly speaking, the major flaw is to assert that if a branch point (downstairs)  is fixed, then also a ramification point  (upstairs) is fixed:
because  $E \ra E/G$ is the composition of an unramified with  a fully ramified morphism. } .

\begin{thm}\label{thm: Main-Theorem-2}
\label{thm}
 Assume that $S$ is a properly elliptic surface ($S$ is minimal of Kodaira dimension $1$),
with $\chi(S)=0$. 

Then $S= (C \times E)/G$ is isogenous to a higher elliptic product and

\begin{enumerate}
\item[(I)]
In the  pseudo-elliptic case where $\Aut^0(S)$ is infinite, equivalently $G$ acts by translations on $E$,
  the subgroup $ \Aut_{\ZZ}(S)$ coincides with $\Aut^0(S)$, except for the pseudo-elliptic  surfaces 
 such that
 \begin{enumerate}
\item[(I-a)]   $G= \ZZ/2m$, where  $m$ is an odd integer, 
 
\item[(I-b)] $C/G= \PP^1$ and $ C \ra \PP^1$ is branched in four points
 with local monodromies $\{ m,m, 2, -2\}$: 
 
 for these we  have $ |\Aut_{\ZZ}(S)/ \Aut^0(S)| = 2$. 
 \end{enumerate}
\item[(II)]
If $\Aut^0(S) = \{ Id_S\}$ but  $ \Aut_{\ZZ}(S) \neq \{ Id_S\}$, then
\begin{enumerate}
\item[(II-a)] $B : = C/G$ has genus $h \geq 1$.
\item[(II-b)]  $\Aut_\ZZ(S)$ is    isomorphic to  one of the following groups:
\[
 \ZZ/2, \ \ZZ/3, \ (\ZZ/2)^2.
\]
\item[(II-c)] 
The first two cases:   $\Aut_\ZZ(S) = \ZZ/2$, and $\Aut_\ZZ(S) = \ZZ/3$,
do actually occur. 
 \end{enumerate}
\end{enumerate}
\end{thm}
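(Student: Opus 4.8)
The plan is to exploit the presentation $S=(C\times E)/\Delta_G$ of Lemma~\ref{lem: k1 str}, to read the action on cohomology off the K\"unneth decomposition of $H^*(C\times E)$, and to identify the kernel of that action. Since $\mathrm{genus}(C)\ge 2>1=\mathrm{genus}(E)$, the factors are non-isomorphic, so $\Aut(C\times E)=\Aut(C)\times\Aut(E)$; as every automorphism preserves the fibration $f\colon S\to B=C/G$ and the $\Delta_G$-action is free, one gets $\Aut(S)=N/\Delta_G$ with $N\subseteq\Aut(C)\times\Aut(E)$ the normalizer of $\Delta_G$. Writing a general element as $\sigma=[(\phi,\psi)]$, I note that $\Aut^0(S)$ is the image of the translation group of $E$ (paired with $\phi=\id_C$). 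Rationally $H^*(S,\QQ)=H^*(C\times E,\QQ)^G$, so $H^1(S,\QQ)=H^1(B,\QQ)\oplus H^1(E,\QQ)^G$, while $H^2(S,\QQ)$ carries between the two fibre classes the mixed summand $(H^1(C,\QQ)\otimes H^1(E,\QQ))^G$; on these $\sigma$ acts by $\bar\phi^*$, by $\psi^*$, and by $\bar\phi^*\otimes\psi^*$ respectively. As a translation acts trivially on $H^*(E)$, this already yields $\Aut^0(S)\subseteq\Aut_\ZZ(S)$, one of the inclusions needed in case (I).

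In case (I) the group $G$ acts on $E$ by translations, so $H^1(E,\QQ)^G=H^1(E,\QQ)\subseteq H^1(S,\QQ)$; since $\Aut(E,0)\hookrightarrow GL(H^1(E,\ZZ))$ is faithful, triviality on $H^1(S)$ forces $\psi$ to be a translation, whence the normalizer condition puts $\phi$ in the centralizer $Z_{\Aut(C)}(G)$, and $\sigma$ is a fibre translation twisted by $[\phi]\in Z_{\Aut(C)}(G)/G$. For $\mathrm{genus}(B)\ge 2$ the Lefschetz fixed point formula turns $\bar\phi^*=\id$ on $H^1(B)$ into $\bar\phi=\id_B$, so $\phi\in G$ and $\sigma\in\Aut^0(S)$; the same holds for $\mathrm{genus}(B)=1$ after a short translation analysis. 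Only $B=\PP^1$ remains, where $H^1(B)=0$ kills both the degree-one and the mixed contribution, so the whole of $Z_{\Aut(C)}(G)/G$ lies in $\Aut_\QQ(S)$ and only the integral structure can detect it. Passing to that refinement, the universal coefficient theorem and Poincar\'e duality give $H^2(S,\ZZ)_{\mathrm{tors}}\cong\Tors\,\pi_1(S)^{\mathrm{ab}}$, which together with the $\sigma$-action I compute from $1\to\pi_1(C)\times\pi_1(E)\to\pi_1(S)\to G\to 1$; demanding trivial action on this torsion group singles out precisely $G=\ZZ/2m$ with $m$ odd and local monodromies $\{m,m,2,-2\}$ for $C\to\PP^1$, producing exactly one nontrivial coset and hence $|\Aut_\ZZ(S)/\Aut^0(S)|=2$, with $\Aut_\ZZ(S)=\Aut^0(S)$ in every other case-(I) surface.

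In case (II) one has $E/G=\PP^1$, so $H^1(E,\QQ)^G=0$ and $q=\mathrm{genus}(B)=h$; with $\chi(S)=0$, i.e. $p_g=q-1\ge 0$, this forces $h=q\ge 1$, which is (II-a). Now $\Aut^0(S)=\{\id\}$, so $\Aut_\ZZ(S)$ is finite, and triviality on $H^1(S)=H^1(B)$ forces (by Lefschetz for $h\ge 2$, directly for $h=1$) that $\bar\phi=\id_B$, whence $\phi\in G$ and $\sigma$ is a translation of $S$ along the fibres by a torsion section compatible with the $G$-action. The mixed summand constrains the linear part of $\psi$: it must act trivially on $H^0(E,\Omega^1_E)$ whenever $p_g(S)>0$, so $\sigma$ is a pure fibre translation, while for $p_g=0$ (equivalently $h=1$) a linear part of order $2$ or $3$ may survive. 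Compatibility confines the torsion sections to $E[2]\cong(\ZZ/2)^2$ or $E[3]$, and the integral condition that $\sigma$ act trivially on $\Tors\,\pi_1(S)^{\mathrm{ab}}$ then selects one of $\ZZ/2,\ \ZZ/3,\ (\ZZ/2)^2$, which is (II-b). For (II-c) I would exhibit explicit Galois covers $C\to B$ of a curve of genus $h\ge 1$ and an elliptic curve $E$ with a $\ZZ/2$- respectively $\ZZ/3$-action and quotient $\PP^1$, checking that the induced fibre translation is cohomologically trivial.

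The main obstacle throughout is this passage from rational to integral triviality: the isomorphism $H^2(S,\ZZ)_{\mathrm{tors}}\cong\Tors\,\pi_1(S)^{\mathrm{ab}}$ and the precise $\sigma$-action on it are what isolate the single exceptional family in case (I) and pin down the short list in case (II). The subtlety flagged after Lemma~\ref{lem: k1 str}---that $E\to E/G$ factors as an unramified map followed by a totally ramified one, so a fixed branch point downstairs need not lift to a fixed ramification point upstairs---is exactly the trap to avoid in the fixed-point and torsion bookkeeping.
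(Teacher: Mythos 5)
Your overall skeleton for case (I) (K\"unneth invariants, reduction to the centralizer of $G$ in $\Aut(C)$, Lefschetz for ${\rm genus}(B)\geq 2$, and $\Aut(S)=\sN_G/\De_G$) agrees with Proposition \ref{G-translates} and the first steps of Theorem \ref{pseudo-elliptic}, but the decisive step --- isolating the exceptional family over $B=\PP^1$ and ruling out a genus-$1$ base --- does not work as you describe. You propose to detect the obstruction through the action on $\Tors\,H^2(S,\ZZ)\cong \Tors\,H_1(S,\ZZ)$; however, in the exceptional family $\{m,m,2,-2\}$ the paper proves that $H_1(S,\ZZ)$ is \emph{torsion free}, so ``trivial action on torsion'' is vacuous there and cannot single that family out (and conversely, triviality on torsion is far from sufficient for cohomological triviality). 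What the paper actually uses is: (i) Lemma \ref{m=2}, which says a cohomologically trivial automorphism can move a multiple fibre only if there are exactly two fibres of multiplicity $2$ and all other multiplicities are \emph{odd} --- this, combined with the fact that an automorphism of $\PP^1$ fixing three points is the identity, is what forces exactly four branch points, monodromies $\{m,m,2,-2\}$ with $m$ odd, and $G\cong\ZZ/2m$ (your proposal has no source for the oddness of $m$); (ii) for a genus-$1$ base, the explicit presentation \eqref{first-hom} of $H_1(S,\ZZ)$, in which the relation $\sum_i x_i=0$ together with $x_1=x_2$ forces $x_1=x_2=0$, a contradiction; and (iii) an existence argument --- what the paper calls the hard part --- showing that for this family the involution swapping the two order-$2$ branch points lifts to $S$ and genuinely acts trivially on integral cohomology (via torsion-freeness, $b_2=2$, and preservation of both fibrations). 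Without (iii) you obtain at best an upper bound, not $|\Aut_\ZZ(S)/\Aut^0(S)|=2$.

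Case (II) has two further gaps. First, for $h=1$ you claim that triviality on $H^1(B,\ZZ)$ ``directly'' yields $\bar\phi=\id_B$: this is false, since translations of an elliptic curve act trivially on all of $H^*(B,\ZZ)$. The paper closes this hole with Lemma \ref{lem: mult fib qf=0}: because ${\rm genus}(E/G)=0$ gives $q_f=0$, distinct multiple fibres of $f$ have distinct classes in $H^2(S,\ZZ)$, hence every branch point of $C\to B$ (such points exist by Riemann--Hurwitz, as $C$ has genus $\geq 2$ over an elliptic base) is fixed, and a translation with a fixed point is the identity --- exactly the fixed-point bookkeeping you flag at the end but never carry out. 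Second, (II-b) is asserted rather than derived: after the inclusion $\Aut_\ZZ(S)\subseteq Z(G)$, the a priori bound coming from Lemma \ref{G} is a subgroup of $(\ZZ/2)^2\times\mu_2$, $\ZZ/3\times\mu_3$, $\ZZ/2\times\mu_4$ or $\mu_6$ (order up to $8$, plus a sporadic nonabelian $G$), and cutting this down to $\ZZ/2,\ \ZZ/3,\ (\ZZ/2)^2$ is the bulk of Theorem \ref{lem: act trivial 1}: the Schur/K\"unneth argument forcing $\psi_2$ to be a translation when $h\geq 2$, and, for $h=1$, a classification of monodromy normal forms together with Reidemeister--Schreier computations of $H_1(S,\ZZ)$ (Proposition \ref{2x2} and the computer-assisted Theorem \ref{Magma}). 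Your picture of the surviving automorphisms as ``pure fibre translations by sections in $E[2]$ or $E[3]$'' is also off the mark: the one nontrivial group occurring for $h=1$ (namely for $G=\ZZ/2\times\mu_4$) is generated by $z\mapsto -z$, not by a translation. In short, the architecture matches the paper, but the three pillars --- Lemma \ref{m=2}, the existence construction in case (I), and the $h=1$ classification in case (II) --- are missing, and the torsion criterion you substitute for them would fail.
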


 Theorem \ref{pseudo-elliptic} is devoted to case (I),  the pseudo-elliptic case where $G$ acts on $E$ by translations: there we describe 
 $\Aut_\QQ(S) $ explicitly,  and show that the  single notable exception
 in which  $\Aut^0(S)$   has index two in $\Aut_\ZZ(S)$ does indeed exist: we just take the Galois covering 
 described in (I-b) of the above theorem (which exists by Riemann's existence theorem) and show that the involution of $\PP^1$ exchanging the first two branch points
 of $ C \ra B=\PP^1$, and leaving the other two fixed, lifts to an automorphism of $S$.
 
 The hard part of the proof is to show that this automorphism is cohomologically trivial: for showing this we have to
 perform calculations on the universal cover of $S$, which is the universal cover $\HH \times \CC$ of $C \times E$.

 Observe that Theorem 3.1  of \cite{CatLiu21}  uses surfaces in this class to produce examples, for all $n \in \NN$,
 of pseudo-elliptic surfaces such that the index of $\Aut_\ZZ(S)$ in $\Aut_\QQ(S)$ is at least $n$. 
 
 Moreover, Theorem 4.1 of the same paper shows that if we drop the assumption of minimality, then there are blow ups of pseudo-elliptic surfaces such that $\Aut^0(S)$ becomes trivial, but $\Aut_\ZZ(S)$ does not: 
 and we get examples, for all  $n \in \NN$, where $|\Aut_\ZZ(S)| = n$. 
 
 \bigskip
 
Case (II) where $S$ is minimal with $\chi(S)=0$, and  where $G$ does not act on $E$ via translations is dealt with in Theorem 
 \ref{lem: act trivial 1}.

 We have several concrete examples of automorphisms in $\Aut_\ZZ(S)$ as stated  in (II-c): the simplest 
 is one of order $3$ and where the cohomology of $S$ is torsion free.

 We illustrate this  main example in order to give a flavour of the arguments  used.
 
 Firstly, a finite group $G$ acting faithfully on an elliptic curve $E$ is a semi-direct product
 $ G = T \rtimes \mu_r$, where $\mu_r$ is the group of $r$-th roots of unity,  $ r \in \{2,3,4,6\}$,
 and  $T$ is a finite group of translations on $E$, hence $2$-generated.
 
 To give instead the action of $G$ on a curve $C$, it is equivalent to construct a Galois $G$-covering ($C \ra B$)
 of a curve $B$, determined (in view of the Riemann existence theorem) by a homomorphism onto $G$ of 
 the orbifold fundamental group,
 a quotient of  the fundamental group $\pi_1(B \setminus \sB)$ of
 the complement in the curve $B$ of the set $\sB$ of branch points.
 
 In this simple example there are no branch points, $B$ has genus $2$, and the group $G \cong \ZZ/3 \times \mu_3$.
 What is left is to give the monodromy, a surjection $\pi_1(B) \ra G$, and we choose that the first two generators
 $\al_1, \be_1$ map to the two generators of $G$, while $\al_2, \be_2$ map to the identity.
 
 In this way $S \ra B$ is a fibre bundle with fibre $E$ and it turns out that $S$ is without torsion,
 indeed $H_1(S, \ZZ) \cong H_1(B, \ZZ) $. It suffices then to take the group $H \cong \ZZ/3$ of automorphisms of $C \times E$
 acting trivially on $C$ and via translations by elements of $T$ on $E$: these automorphisms induce automorphisms of $S$
 and clearly act trivially on 
  $H^*(C \times E, \QQ)$: hence a fortiori they act trivially on the subspace of $G$-invariants,
  $ H^*(C \times E, \QQ) ^G = H^*(S, \QQ) = H^*(S, \ZZ)$.
 
 We have more complicated examples (where $H_1(S, \ZZ)$ has a nonzero torsion subgroup), and in order to prove Theorem 
  \ref{lem: act trivial 1} we need to run a long list of possible
 groups $G$ and monodromy homomorphisms.

Finally, for applications to the  Teichm\"uller space (stack) of $X$ (\cite{handbook, Me19}) 
the primary object of interest is the subgroup of 
 $C^\infty$-isotopically
 trivial automorphisms
$$\Aut_*(X) : = \{\sigma\in\Aut(X) \mid \sigma \in \Diff^0(X)\},$$
($\Diff^0(X)$
 denotes the connected  component of the identity in the  group of diffeomorphisms),
clearly contained in the group of homotopically trivial automorphisms
\[
\Aut_\sharp(X) = \{\sigma\in\Aut(X) \mid \sigma \text{ is homotopic to }\id_X\}.
\]
 It is an intriguing question to find examples where these two subgroups differ.

The chain of subgroups 
$$
   \Aut^0(X) \vartriangleleft \Aut_*(X) \vartriangleleft \Aut_\sharp(X)\vartriangleleft \Aut_\ZZ(X)
   \vartriangleleft \Aut_\QQ(X)  \vartriangleleft \Aut(X),
$$
 was discussed in our previous paper \cite{CatLiu21}, especially dedicated to the case of complex dimension $n=2$.
  In the present  paper we prove in Proposition \ref{homotopic} that, for surfaces with Kodaira dimension $1$ and $\chi(S)=0$,
 $\Aut_\sharp(S) =   \Aut^0(S)$.

\bigskip

The sequel to this paper, Part II \cite{cls},  deals with cohomologically (and numerically) trivial automorphisms of properly elliptic surfaces $S$
with $\chi(S)>0$,
showing in particular that $|\Aut_\QQ(S)|$ is unbounded:   this  is a surprising result since for 
over  40 years it was believed  that  these groups are trivial for $p_g >0$ \footnote{ Theorem 4.5 of \cite{Pe80}, confirmed by  Cor. 0.3 of \cite{Cai09}) }.

{\bf Acknowledgement:} We want to thank heartily the referee, for providing many suggestions in order to make the 
presentation  of our arguments more transparent and accessible to the reader (repetita juvant, said the Romans).

\section{Preliminaries}
\subsection{Cohomology classes of multiple fibres}
\begin{lem}\label{lem: mult fib qf=0}
Let $f\colon S \rightarrow B$ be a fibred surface with $q_f:=q(S)-g(B)=0$. Then for two distinct multiple fibres $m_1 F_1$ and $m_2 F_2$, we have $[F_1] \neq [F_2]\in H^2(S, \ZZ)$.
\end{lem}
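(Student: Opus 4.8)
The plan is to argue by contradiction. Writing $f^{-1}(P_i)=m_iF_i$ with $F_i$ reduced and $P_1\ne P_2$, the two reduced fibres are disjoint, $F_1\cap F_2=\emptyset$. Suppose $[F_1]=[F_2]$ in $H^2(S,\ZZ)$ and consider the line bundle $L:=\sO_S(F_1-F_2)$. Then $c_1(L)=[F_1]-[F_2]=0$, so by the exponential sequence $L$ lies in $\Pic^0(S)=H^1(S,\sO_S)/H^1(S,\ZZ)$. The goal is to turn the hypothesis $q_f=0$ into the statement that such an $L$ must be pulled back from $B$, and then to contradict this by restricting to a single fibre.

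The main step, and the place where $q_f=0$ is used, is to show $\Pic^0(S)=f^*\Pic^0(B)$. Since $f$ has connected fibres ($f_*\sO_S=\sO_B$), the low-degree terms of the Leray spectral sequence give
\[
0\to H^1(B,\sO_B)\xrightarrow{\,f^*\,} H^1(S,\sO_S)\to H^0(B,R^1f_*\sO_S)\to 0,
\]
using $H^2(B,\sO_B)=0$. Now $q_f=q(S)-g(B)=h^1(S,\sO_S)-h^1(B,\sO_B)$ equals $\dim H^0(B,R^1f_*\sO_S)$, so $q_f=0$ makes $f^*$ an isomorphism on $H^1(\,\cdot\,,\sO)$. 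As this linear isomorphism carries the lattice $H^1(B,\ZZ)$ into $H^1(S,\ZZ)$, the induced homomorphism of complex tori $f^*\colon\Pic^0(B)\to\Pic^0(S)$ has bijective differential and is therefore surjective (and in fact injective, by the projection formula $f_*f^*M=M$, hence an isomorphism). Consequently $L=f^*M$ for some $M\in\Pic^0(B)$.

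Finally I would restrict to $F_1$. On one side, because $F_1$ and $F_2$ are disjoint,
\[
L|_{F_1}=\sO_S(F_1)|_{F_1}\otimes\sO_S(-F_2)|_{F_1}=\sO_{F_1}(F_1).
\]
On the other side $f|_{F_1}$ is constant with value $P_1$, so $(f^*M)|_{F_1}$ is trivial; comparing gives $\sO_{F_1}(F_1)\cong\sO_{F_1}$. This is the desired contradiction, because for a multiple fibre $m_1F_1$ with $m_1\ge 2$ the class $\sO_{F_1}(F_1)$ has order exactly $m_1$ in $\Pic(F_1)$ --- a classical feature of multiplicities which I would invoke rather than reprove --- and in particular is nontrivial.

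The genuine obstacle is the middle step: identifying $\Pic^0(S)$ with the image of $f^*$, which is precisely the content of $q_f=0$. Once a cohomologically trivial $L$ is known to come from the base, restricting to a fibre trivialises it and collides with the nontriviality of the normal bundle of a multiple fibre, and the argument closes. I note in passing that when $m_1\ne m_2$ the conclusion is already visible rationally, since $m_1[F_1]$ and $m_2[F_2]$ both equal the nonzero fibre class in $H^2(S,\QQ)$ and hence $[F_1]\ne[F_2]$ already there; the torsion case $m_1=m_2$ is the one that really needs the argument above, though the proof as written covers all cases at once.
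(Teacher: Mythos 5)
Your proof is correct and follows essentially the same route as the paper: assume $[F_1]=[F_2]$, deduce $\sO_S(F_1-F_2)\in\Pic^0(S)$ via the exponential sequence, use $q_f=0$ to identify $\Pic^0(S)$ with $f^*\Pic^0(B)$, and derive a contradiction by restricting to $F_1$, where a pullback from $B$ is trivial but the normal bundle $\sO_{F_1}(F_1)$ of a multiple fibre is not. The only (cosmetic) differences are that you obtain the isomorphism $f^*\colon\Pic^0(B)\to\Pic^0(S)$ from the Leray sequence on $H^1(\cdot,\sO)$ rather than by dualizing $f_*\colon\Alb(S)\to\Alb(B)$ as the paper does, and that you make explicit the classical order-$m_1$ fact which the paper only asserts implicitly.
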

\begin{proof}
Suppose on the contrary that $[F_1] = [F_2]\in H^2(S, \ZZ)$. Then by the long exact sequence associated to the exponential sequence $0\rightarrow \ZZ \rightarrow \sO_S\rightarrow \sO_S^*\rightarrow 0$, we have $\sO_S(F_1-F_2)\in \Pic^0(S)$. Now $q_f=0$ implies that the map $f_*\colon \Alb(S)\rightarrow \Alb(B)$ is an isomorphism. Therefore, its dual $f^*\colon \Pic^0(B)\rightarrow \Pic^0(S)$ is also an isomorphism. Thus there is some $L\in \Pic^0(B)$ such that $f^*L = \sO_S(F_1-F_2)$. On the other hand, since $\sO_S(F_1-F_2)|_{F_1}\not\cong \sO_{F_1}$, it cannot be of the form $f^*L$, which is a contradiction.
\end{proof}

\begin{cor}\label{cor: mult fib qf=0}
Let $f\colon S\rightarrow B$ be a fibred surface with $q(S)=0$. Then for two distinct multiple fibres $m_1 F_1$ and $m_2 F_2$, we have $[F_1] \neq [F_2]\in H^2(S, \ZZ)$.
\end{cor}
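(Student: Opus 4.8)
The plan is to reduce the statement to Lemma \ref{lem: mult fib qf=0}, which already establishes the desired conclusion under the weaker-looking hypothesis $q_f = q(S) - g(B) = 0$. Thus it suffices to verify that the stronger hypothesis $q(S) = 0$ forces $q_f = 0$, and then the corollary follows formally.

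To this end, I would first recall the elementary inequality $g(B) \leq q(S)$ valid for any fibration $f \colon S \to B$ onto a smooth curve $B$. This comes from the fact that the pullback of holomorphic one-forms gives an injection $f^* \colon H^0(B, \Omega^1_B) \hookrightarrow H^0(S, \Omega^1_S)$: a nonzero one-form on $B$ pulls back to a nonzero one-form on $S$ since $f$ is dominant (being submersive on a dense open set). Consequently $g(B) = h^0(B, \Omega^1_B) \leq h^0(S, \Omega^1_S) = q(S)$. Alternatively one may phrase this through the surjection $\Alb(S) \twoheadrightarrow \Jac(B)$ induced on Albanese varieties by $f$, which again forces $g(B) \leq q(S)$.

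Now, since $q(S) = 0$ and $g(B) \geq 0$, the inequality $g(B) \leq q(S) = 0$ yields $g(B) = 0$, and therefore $q_f = q(S) - g(B) = 0 - 0 = 0$. Applying Lemma \ref{lem: mult fib qf=0} with this value of $q_f$ immediately gives $[F_1] \neq [F_2] \in H^2(S, \ZZ)$ for any two distinct multiple fibres $m_1 F_1$ and $m_2 F_2$.

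I do not expect any genuine obstacle here: all the substantive work resides in the lemma, via the argument that $\sO_S(F_1 - F_2)$ would have to be a pullback from $B$ yet restricts nontrivially to $F_1$. The corollary is a purely formal consequence once one observes that $q(S) = 0$ collapses both $g(B)$ and $q_f$ to zero. The only point deserving a moment's care is the justification of $g(B) \leq q(S)$, i.e. the injectivity of $f^*$ on holomorphic one-forms.
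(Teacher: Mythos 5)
Your proposal is correct and follows exactly the route the paper intends: the paper states Corollary \ref{cor: mult fib qf=0} without proof as an immediate consequence of Lemma \ref{lem: mult fib qf=0}, the implicit point being precisely your observation that $g(B)\leq q(S)$ forces $q_f=q(S)-g(B)=0$ when $q(S)=0$. Your justification of $g(B)\leq q(S)$ via injectivity of $f^*$ on holomorphic one-forms is the standard argument and fills in the only detail the paper leaves tacit.
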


\begin{rmk}
If $q_f>0$, it  is  possible that $[F_1] = [F_2]\in H^2(S, \ZZ)$, see Theorem \ref{pseudo-elliptic}. However, $F_1$ and $F_2$ cannot be linearly equivalent.
\end{rmk}

The following lemma is Principle 3, page 185 of \cite{CatLiu21}:

\begin{lemma}\label{m=2}
Let $f\colon S \rightarrow B$ be a fibred surface, let $F'' = m F'$ be a multiple fibre with $F'$ is irreducible,
and let $\s$ be a cohomologically trivial automorphism of $S$: then $\s(F')= F'$ unless possibly if the genus of $B$ is at most $1$, $m=2$, there are only two multiple fibres with multiplicity $2$, they are isomorphic to each other, and all the other multiple fibres have odd multiplicity.
\end{lemma}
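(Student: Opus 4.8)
The plan is to run the whole argument through the action of $\sigma$ on the fibration and on $\Pic^0(S)$. First I would record the two consequences of cohomological triviality that drive everything: since $\sigma^*=\id$ on $H^1(S,\ZZ)$ and $H^1(S,\sO_S)$ is the $(0,1)$-part of $H^1(S,\ZZ)\otimes\CC$, the map $\sigma^*$ is the identity on $\Pic^0(S)=H^1(S,\sO_S)/H^1(S,\ZZ)$; and since $\sigma^*=\id$ on $H^2(S,\ZZ)$, the automorphism $\sigma$ preserves the class of every divisor. As $\Aut(S)$ preserves $f$, the automorphism $\sigma$ permutes the multiple fibres preserving multiplicities and induces $\bar\sigma$ on $B$ with $f\circ\sigma=\bar\sigma\circ f$. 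Throughout I use the standard fact that a reduced multiple fibre $F'$ with $mF'$ a fibre has normal bundle $N_{F'}=\sO_{F'}(F')$ of order exactly $m$ in $\Pic(F')$, which holds for the smooth elliptic (multiple) fibres occurring here by Lemma \ref{lem: k1 str}(3).

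Now suppose $\sigma(F')\neq F'$ and let $F_0=F',F_1,\dots,F_{\ell-1}$ be its $\sigma$-orbit, with $\sigma(F_i)=F_{i+1}$ (indices mod $\ell$) and $\ell\ge 2$; all $F_i$ share the multiplicity $m$ and, by cohomological triviality, the same class in $H^2(S,\ZZ)$. Hence each $L_i:=\sO_S(F_i-F_0)$ lies in $\Pic^0(S)$, and for $i\neq 0$ restriction to $F_0$ gives $L_i|_{F_0}=N_{F_0}^{-1}$, of order $m$. The core computation is to feed the invariance $\sigma^*L_i=L_i$ (from $\sigma^*=\id$ on $\Pic^0(S)$) into the identity $\sigma^*L_i=\sO_S(F_{i-1}-F_{\ell-1})=L_{i-1}\otimes L_{\ell-1}^{-1}$; this forces $L_i=L_{\ell-1}^{-i}$, and comparing $L_1=L_{\ell-1}^{-1}$ with $L_1|_{F_0}=N_{F_0}^{-1}$ yields $N_{F_0}=N_{F_0}^{-1}$, so $m=2$. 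The same comparison for general $i$ gives $N_{F_0}^{\,i}=N_{F_0}$, which for $i=2$ is impossible unless $\ell=2$. Thus $\sigma$ merely interchanges two multiplicity-two fibres $F_0,F_1$, and since $\sigma$ restricts to an isomorphism $F_0\xrightarrow{\sim}F_1$ these two fibres are isomorphic.

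For the genus bound I would pass to the base. From $f\circ\sigma=\bar\sigma\circ f$ and the injectivity of $f^*\colon H^1(B,\ZZ)\hookrightarrow H^1(S,\ZZ)$ one gets $f^*\circ\bar\sigma^*=\sigma^*\circ f^*=f^*$, hence $\bar\sigma^*=\id$ on $H^1(B,\ZZ)$. But $\bar\sigma(b_0)=b_1\neq b_0$, so $\bar\sigma\neq\id$; since for a curve of genus $\ge 2$ the group $\Aut$ acts faithfully on $H^1(\,\cdot\,,\ZZ)$ (via Torelli), we conclude $g(B)\le 1$.

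The remaining and hardest assertions are that there are exactly two multiplicity-two fibres and that every other multiple fibre has odd multiplicity; this is where the main obstacle lies. The class $L:=L_1=\sO_S(F_0-F_1)\in\Pic^0(S)$ is $2$-torsion, restricts to the order-two $N_{F_0}^{-1}$ on $F_0$ and on $F_1$, and restricts trivially to every other fibre (in particular to the generic one, since $F_0,F_1$ are disjoint from it). I would therefore pass to the connected \'etale double cover $p\colon\widetilde S\to S$ defined by $L$; its Stein factorisation over $B$ is a double cover $\widetilde B\to B$ branched exactly at $b_0,b_1$, which resolves the two multiplicity-two fibres while splitting each remaining multiple fibre into a disjoint pair of isomorphic copies. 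The point is that $L$ must have trivial first Chern class, not merely be torsion in $\Pic(S)$; translating this vanishing into a $2$-adic relation among the $[F_i]$ (using $m_i[F_i]=[F]$ for the generic fibre class $[F]$, together with the fact that $\bar\sigma$ has only finitely many fixed branch points when $g(B)=0$ and acts freely when $g(B)=1$) is exactly the parity bookkeeping that should force the even-multiplicity fibres to be precisely $F_0,F_1$ and all others to have odd multiplicity. Carrying out this torsion computation cleanly, rather than the preceding soft arguments, is the step I expect to be genuinely delicate.
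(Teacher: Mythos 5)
First, a structural remark: the paper does not actually prove Lemma~\ref{m=2}; it quotes it verbatim as Principle~3, p.~185 of \cite{CatLiu21}. So your proposal has to be measured against that cited proof and against the toolkit this paper sets up in Section~2. The part of your argument that you carry out is correct: $\sigma^*$-invariance of $\Pic^0(S)$, the recursion $\sigma^*L_i=L_{i-1}\otimes L_{\ell-1}^{-1}$, and restriction to $F_0$ against the fact that $N_{F_0}$ has order exactly $m$ correctly give $m=2$, orbit length $2$, $F_0\cong F_1$, and (via Lefschetz on $B$) $g(B)\le 1$. (Incidentally, the normal-bundle fact holds for every multiple fibre of every fibred surface, so you need not restrict to the elliptic fibres of Lemma~\ref{lem: k1 str}.) This Picard-theoretic route is a genuine alternative for these clauses.

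The genuine gap is that the two remaining clauses --- \emph{there are only two multiple fibres of multiplicity $2$} and \emph{all other multiple fibres have odd multiplicity} --- are never proved, and these are exactly the clauses the paper uses downstream (Step (genus 0) in the proof of Theorem~\ref{pseudo-elliptic}, and part (1) of the proof of Theorem~\ref{lem: act trivial 1}). Worse, the repair you sketch cannot work, because the data it relies on --- $L=\sO_S(F_0-F_1)$ being a nontrivial $2$-torsion element of $\Pic^0(S)$ --- does not imply those clauses. Concretely: let $G=\ZZ/2\times\ZZ/4=\langle u\rangle\times\langle v\rangle$ act on $E$ by translations, let $C\ra\PP^1$ be the $G$-covering branched in six points with monodromies $(u,u,v,v,v,v)$ (so $g(C)=9$), and let $S=(C\times E)/\Delta_G$. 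The intermediate double cover of $\PP^1$ attached to the character $\chi(u)=-1$, $\chi(v)=1$ is branched exactly at $b_1,b_2$, which forces the reduced preimages $D_1\sim D_2$ on $C$; hence $\sO_S(F_1-F_2)$ restricts trivially to every fibre of the $C$-bundle $p\colon S\ra E/G$, so it lies in $p^*\Pic^0(E/G)\subset\Pic^0(S)$ and $[F_1]=[F_2]$ in $H^2(S,\ZZ)$ --- yet the other four multiple fibres have multiplicity $4$. (This is consistent with the lemma: by Theorem~\ref{pseudo-elliptic} this $S$ has $\Aut_\ZZ(S)=\Aut^0(S)$, so no cohomologically trivial automorphism swaps $F_1,F_2$ despite the equality of classes.) So no bookkeeping with $L$ alone, \'etale double covers included, can yield the parity statement.

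The hypothesis you never use is triviality of $\sigma_*$ on $H_1(S,\ZZ)$ \emph{including its torsion}, and it closes the gap in one stroke, using the paper's own Section~2. By the orbifold exact sequence \eqref{fund-gr-ex-seq}, $H_1(S,\ZZ)$ surjects $\sigma$-equivariantly onto $H_1^{orb}(f)=H_1(B,\ZZ)\oplus\bigl(\bigoplus_i(\ZZ/m_i)\gamma_i\bigr)/\bigl\langle\sum_i\gamma_i\bigr\rangle$, on which $\sigma$ acts through $\bar\sigma_*$ and the permutation of branch points; so cohomological triviality forces $\gamma_1=\gamma_2$ there, i.e.\ $\gamma_1-\gamma_2=k\sum_i\gamma_i$ for some $k\in\ZZ$. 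Reading off components gives $k\equiv 1\pmod{m_1}$, $k\equiv-1\pmod{m_2}$, and $k\equiv 0\pmod{m_i}$ for $i\neq 1,2$; with $m_1=m_2=m$ the first two force $m\mid 2$ and $k$ odd, and then every other $m_i$ divides the odd integer $k$, hence is odd. This single computation yields \emph{all} clauses of the lemma at once (triviality on the $H_1(B,\ZZ)$-summand re-derives $g(B)\le 1$), and is the substance of the cited proof, whereas your $\Pic^0$-argument, however elegant for the clauses it does reach, cannot see the torsion information that the last two clauses encode.
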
 

\subsection{Integral 
first homology
of an elliptic surface which is not a quasi-bundle}

Here we say,  as customary,   that an elliptic surface is  a quasi-bundle (as in \cite{Ser91})   if,  for all fibres $F$, $F_{\red}$ is  smooth.

In the contrary case, we have the following Lemma which is essentially known (see \cite{Ser91}, \cite{cz}, but we state it and sketch its proof for the sake of  clarity).
 
\begin{lem}\label{no-torsion} 
Let $f : S \ra B$ be an elliptic surface with a singular fibre which is not a smooth multiple fibre
(this is equivalent to the assumption that $e(S) >0$, and in particular this holds if $f$ admits a section and has a singular fibre).

Then   $H_1(S)$ is the abelianization of the orbifold fundamental group of the fibration,
$$H_1(S) = H_1^{orb} (f) : =  H_1(B, \ZZ) \bigoplus \left( \bigoplus_i (\ZZ/m_i)  \ga_i )\right)/ \ZZ \left(\sum_i\ga_i\right).$$
\end{lem}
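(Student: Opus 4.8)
The plan is to identify $H_1(S,\ZZ)$ with the abelianization of the orbifold fundamental group $\pi_1^{orb}(f)$ of the fibration, and then simply read off the stated formula. Writing $g=g(B)$ and recalling the standard presentation
\[
\pi_1^{orb}(f)=\Big\langle a_1,b_1,\dots,a_g,b_g,\gamma_1,\dots,\gamma_n \ \Big|\ \textstyle\prod_j[a_j,b_j]\prod_i\gamma_i=1,\ \gamma_i^{m_i}=1 \Big\rangle,
\]
where $\gamma_i$ is a loop around the point $b_i\in B$ carrying the multiple fibre $m_iF_i$, one abelianizes: the commutators die, so the surface relation collapses to $\sum_i\gamma_i=0$, the classes $a_j,b_j$ span $H_1(B,\ZZ)$, and the torsion relations give $m_i\gamma_i=0$. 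This is exactly $H_1^{orb}(f)$ as in the statement. So the entire content is the isomorphism $\pi_1(S)\cong\pi_1^{orb}(f)$ (indeed only its abelianization is needed).

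To produce this isomorphism I would first delete the finite set $\Sigma\subset B$ of points over which the fibre is singular or multiple, and set $S^\circ=f^{-1}(B\setminus\Sigma)$, so that $f^\circ\colon S^\circ\ra B\setminus\Sigma$ is a genuine fibre bundle with fibre a smooth elliptic curve $F$. The homotopy exact sequence then gives
\[
\pi_1(F)\ra \pi_1(S^\circ)\ra \pi_1(B\setminus\Sigma)\ra 1 .
\]
Next I would reconstruct $\pi_1(S)$ from $\pi_1(S^\circ)$ by van Kampen, gluing back a tubular neighbourhood $f^{-1}(D_p)$ of each deleted fibre over a small disk $D_p$. Over a reduced but singular fibre the local model contributes a \emph{vanishing cycle} $\delta_p\in\pi_1(F)$ that bounds in $f^{-1}(D_p)$, hence dies in $\pi_1(S)$, while its meridian $\gamma_p$ gets expressed through fibre classes and so drops out of the presentation; over a multiple fibre $m_iF_i$ the local monodromy has finite order $m_i$ and the gluing introduces precisely the orbifold relation $\gamma_i^{m_i}=1$ (together with a further vanishing cycle if $F_i$ is itself singular). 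After these fillings the only surviving meridians are the $\gamma_i$, and the relation $\textstyle\prod_{p\in\Sigma}\gamma_p\prod_j[a_j,b_j]=1$ becomes $\prod_i\gamma_i\prod_j[a_j,b_j]=1$.

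The crucial point — and the place where the hypothesis $e(S)>0$ enters — is that the image of $\pi_1(F)$ in $\pi_1(S)$ vanishes. Locally a single $I_n$-type degeneration kills only its own vanishing cycle, leaving the transverse class alive; but feeding the vanishing cycles of all singular fibres through the global monodromy representation $\pi_1(B\setminus\Sigma)\ra \Aut\big(H_1(F)\big)$, together with the relations $(T_*-1)v=0$ coming from monodromy, forces all of $H_1(F)=\ZZ^2$ to die once there is a genuine degeneration. This is the standard computation of the fundamental group of an elliptic surface carrying a singular fibre (see \cite{Ser91}, \cite{cz}); it yields $\pi_1(S)\cong\pi_1^{orb}(f)$, and abelianizing as above gives the formula. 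In particular the free part of $H_1(S)$ is $H_1(B,\ZZ)$, i.e. $q(S)=g(B)$.

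I expect the main obstacle to be exactly this vanishing of the image of $\pi_1(F)$: unlike the Lefschetz case a single vanishing cycle does not suffice, and one must argue that the vanishing cycles of the various singular fibres, acted on by the global monodromy, span $H_1(F)$ modulo the coinvariant relations — which is precisely where non-quasi-bundleness is used, and where one must check that no spurious torsion from the coinvariants survives in $H_1(S)$ beyond the orbifold torsion. A secondary technical nuisance is the bookkeeping at a multiple fibre whose reduction is itself singular, where the order-$m_i$ generator $\gamma_i$ and a vanishing cycle are introduced simultaneously and must be disentangled; for this I would rely on the local normal forms of singular fibres from \cite{Ser91} rather than redo them by hand.
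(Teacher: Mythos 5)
Your proof is correct and is essentially the paper's: both hinge on the orbifold fundamental group exact sequence $\pi_1(F)\to\pi_1(S)\to\pi_1^{orb}(f)\to 1$ (which you re-derive by deleting the bad fibres and applying the homotopy sequence plus van Kampen, whereas the paper quotes it from \cite{barlotti}), followed by abelianization and the key vanishing of the image of $H_1(F,\ZZ)$ in $H_1(S,\ZZ)$, which both you and the paper ultimately take from Kodaira's local analysis of singular fibres together with \cite{cz} and \cite{Ser91}. In particular, the step you flag as the main obstacle is handled in the paper exactly the same way---by citing Lemma 1.39 of \cite{cz}---so your deferral to the literature there matches the paper's own proof and no further argument is required.
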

\begin{proof}
Let $g$ be the genus of $B$. Then
we have the orbifold fundamental group exact sequence for $f$, see for instance \cite{barlotti}:
\begin{equation}\label{fund-gr-ex-seq} \pi_1(F) \ra \pi_1(S) \ra \pi_1^{orb}(f) \ra 1,
\end{equation}
where, $m_1, \dots, m_r$ being the multiplicities of the multiple fibres, and $ \ga_1, \dots, \ga_r$ 
being simple geometric loops around the branch points,
\begin{multline*}
 \pi_1^{orb}(f) : = 
 \langle \al_1, \beta_1, \dots, \al_g, \beta_g , \ga_1, \dots, \ga_r \mid \\
\prod_i [\al_i, \beta_i]  \ga_1 \dots \ga_r =1,   \ga_1^{m_1}=  \dots =  \ga_r^{m_r} = 1\rangle .
\end{multline*}

\begin{remark}\label{abelianization}
In general, to an exact sequence of groups 
$$  A \ra B \ra C \ra 1$$
corresponds another exact sequence
$$  A^{ab}  := A / [A,A]   \ra B^{ab}  := B / [B,B]  \ra C^{ab} : = C/[C,C] \ra 0.$$
\end{remark}
\bigskip

From this remark and equation \eqref{fund-gr-ex-seq}
we deduce an exact sequence    
 $$ H_1 (F, \ZZ) = \pi_1(F) \ra H_1(S, \ZZ) \ra (\pi_1^{orb}(f) )^{ab} = H_1^{orb} (f)  \ra 0 $$

The assumption made on the existence
of a singular fibre which is not multiple of a smooth curve, and Kodaira's analysis of local monodromies
implies by \cite[Lemma 1.39]{cz} that the image of $ H_1 (F, \ZZ) $  is zero in $ H_1(S, \ZZ)  $, hence 
it  follows then that $ H_1(S, \ZZ) \cong (\pi_1^{orb}(f) )^{ab}=  H_1^{orb} (f) $
 as claimed.
\end{proof}

\begin{remark}\label{poincare}
The previous lemma determines in particular  the torsion subgroup of $H_1(S, \ZZ)$, which, by Poincar\'e's 
duality, is also the torsion subgroup of  $H^3(S, \ZZ)$.

Moreover  there is a canonical isomorphism of the respective
torsion subgroups of $H^2 (S, \ZZ) \cong H_2(S, \ZZ)$ and of $H_1(S, \ZZ)$ (see \cite[ Thm.\ 23.18]{greenberg}). 

 Since this isomorphism is  canonical, and is associated to the complexes of groups of singular chains and cochains, 
on which any  homeomorphism acts compatibly, an automorphism which  acts as the identity on the torsion subgroup of $H_1(S, \ZZ)$
will also act as the identity on the torsion subgroup of $H_2(S, \ZZ)$.

 Finally, to verify that an automorphism is cohomologically trivial, it suffices to verify
that it acts as the identity on $H_1(S, \ZZ)$ and on  $H^2 (S, \ZZ)$.

\end{remark}

The next section takes, in particular, care of the case where,  for all fibres $F$, $F_{\red}$ is  smooth.

\subsection{Further notation} Given a variety $X$, we shall often denote $\pi_1(X)$ simply by $\pi_X$.

The fundamental group $\pi_1(S)$  of our  surfaces $S$ shall be denoted by $\Ga$, especially when we are 
seeing $S$ as a quotient of its universal covering $\tilde{S} = \HH \times \CC$. We shall also consider
more generally groups $\Ga$ acting on $ \HH \times \CC$.

\section{Surfaces isogenous to a higher elliptic product and their automorphisms}

We consider in this section a surface $S$ isogenous to a higher elliptic product,
namely, a quotient 
$$S=(C \times E)/\Delta_G,$$ where $ C $ and $E$ are smooth curves with respective genera  ${\rm genus}(C ) = : g \geq 2$ and ${\rm genus}(E)=1$, and $G$ is a finite group acting faithfully on $C $ and on $E$ such that the diagonal $\Delta_G\subset G\times G$ acts freely on $C\times E$.

\begin{remark}\label{faithful}
If $S=(C\times E)/\Delta_G$, and the action is free, we can assume without loss of generality that $G$ acts faithfully
on $C$ and on $E$, going to a minimal realization.

In fact, if the action is not faithful on $E$, the subgroup $G': = Ker (G \ra Aut(E))$ is normal and we can replace $C$ by $C/G'$:
since  $G'$ acts freely on $C$, also $C/G'$ has genus at least $2$.

 If the action is not faithful on $C$, the subgroup $G'': = Ker (G \ra Aut(C))$ acts freely on $E$, hence by translations, and we can replace $E$ by the elliptic curve $E/ G''$.
\end{remark}

Our aim is to describe explicitly the group $\Aut(S)$, in order to make precise   calculations
of the action of automorphisms on cohomology groups.

\subsection{Lifts of automorphisms to the universal covering}

As a first step  we recall in this particular case some general theory saying that, if $\tilde{S}$ is the universal covering of $S$, and $S = \tilde{S}/\Ga$, then the group $\Aut(S)$  
is the quotient group of the normalizer of $\Ga$ inside $\Aut(\tilde{S})$ by the subgroup $\Ga \cong \pi_1(S)$; see \eqref{univ-cover}.

Such a surface $S$ admits two fibrations, 
\begin{equation}\label{fibrations}
f : S \ra C/G, \ p : S \ra E/G.
\end{equation}

We have an exact sequence of fundamental groups
\begin{equation}\label{product-ex-seq} 1 \ra \pi_C \times \pi_E \ra \pi_1(S) = : \Ga \ra G \ra 1,
\end{equation}
where $\pi_C :  =  \pi_1(C) , \pi_E :  =  \pi_1(E).$

Elementary covering spaces  theory shows that each self-map $\psi : S \ra S$ lifts to the universal covering,
which is here $$q : \HH \times \CC \ra S =(\HH\times\CC)/ \Gamma.$$

As in  \cite[page 316]{topmethods}, we consider the cyclic semigroup $H$ generated by $\psi$,
or any other group or semigroup of self-maps of $S$, and we consider the semigroup  of lifts of elements of $H$, namely
$$ H' : = \{ \psi' : \HH \times \CC \ra \HH \times \CC\mid  \exists\,  \psi \in H  \text{ such that }  \psi' \circ q = q \circ \psi\}.$$

There is a short  exact sequence of semigroups
$$ 1 \ra \Ga \ra H' \ra H \ra 1.$$

 Assume now that  $\psi : S \ra S$ is a homeomorphism: then we let $H$ be the cyclic group generated by $\psi$ and 
there is a homomorphism $$ H \ra {\Aut(\Ga)}
 / \Inn (\Ga)$$ induced by conjugation by a lift $\psi'$ of $\psi$.
The fact that this action is defined only up to inner conjugation amounts to the fact that changing the base point
 $x_0$ to $y_0$  we get an isomorphism of fundamental groups $\pi_1(S, x_0) \cong \pi_1 (S, y_0)$ which is only
defined up to inner conjugation.

In particular, the group $\Aut(S)$ is  the quotient
\begin{equation}\label{univ-cover} \Aut(S) =\widetilde \sN_{\Ga} / \Ga, \end{equation}
where $\widetilde\sN_{\Ga}$ is the normalizer of $\Ga$ inside $\Aut( \HH \times \CC)$.

 Indeed any lift  $\psi'$ of an automorphism $\psi\in \Aut(S)$ yields  a commutative diagram
\[
\begin{tikzcd}
\HH\times\CC\arrow[r, "\psi ' "] \arrow[d, "q"']&  \HH\times\CC \arrow[d, "q"]\\
S \arrow[r, "\psi"]& S
\end{tikzcd}
\]
where $q$ is the quotient map. It follows that $\psi' \Gamma =\Gamma \psi '$, that is, $\psi' \in \widetilde \sN_\Gamma$. Conversely, any $\psi '\in \widetilde \sN_\Gamma$ descends to $S$.

Consider then  the exact sequence 
$$ 1 \ra \Ga \ra  \widetilde \sN_\Gamma \ra \Aut (S) \ra 1,$$
 where  any lift $\psi'$ of $\psi$ defines, via conjugation, an automorphism of $\Ga$ which is well defined up to
inner conjugation. 

Take $x_0$ to be a base point on $S$ and consider the fundamental group
$\Ga = \pi_1(S, x_0)$: then, setting $y_0 : = \psi (x_0)$, we have  $\psi_*  : \pi_1(S, x_0) \ra \pi_1(S, y_0)$,
and choosing a path $\de$ from $x_0$ to $y_0$ we get a fixed isomorphism 
$$  \pi_1(S, y_0) \cong  \pi_1(S, x_0),$$
obtained via conjugation by $\de$ (we take $\de$ to be the trivial path if $y_0= x_0$)
and,  composing $\psi_*$ with this isomorphism, we get an automorphism of $\Ga$.

By changing the lift we can make the automorphism of $\Ga$ induced by conjugation equal to 
$$\psi_* : \pi_1(S, x_0) \ra \pi_1(S, y_0)$$
(see especially \cite[page 316]{topmethods}, where $H'$ is called  the orbifold fundamental group
associated to a properly discontinuous subgroup $H$ of $\Aut(S)$: if the action of $H$ is free, $H'$ 
is the fundamental group of $S/H$,  otherwise, if $x_0$ is a fixed point of $H$, it is the semidirect product
$\Ga \rtimes H$, where conjugation is given by the action of $H$ on $\Ga = \pi_1 (S, x_0)$,
where $\psi \mapsto \psi_*$).
 
This property defines  a  lift $\tilde\psi$ of $\psi$, such that, considering the isomorphism 
 $\psi_*$  indicated above  (depending on the choice of $\de$), 
and setting, for $\ga \in \Ga$,  $\ga' : =  \psi_*(\ga)$,
$$ \tilde\psi : \HH \times \CC \ra \HH \times \CC$$
 enjoys the following important property
\begin{equation}\label{adding-prime} \ga' \circ \tilde\psi = \tilde\psi \circ \ga
\end{equation}
(observe that  both left hand side and right hand side are lifts of $\psi$ which  take the same value on
the same base point $x_0'$ lying above $x_0$).

Write now
 $$ \ \tilde\psi (t,z) = (\tilde\psi_1 (t,z), \tilde\psi_2 (t,z))=  (\tilde\psi_1 (t), \tilde\psi_2 (t,z)),$$
where the last equality follows from Liouville's Theorem,  which also implies that for $\ga \in \Ga$
(which is a lift of the identity of $S$)
$$ \ga (t,z) = (\ga_1 (t), \ga_2 (t,z)).$$

\subsection{Useful formulae and lifts to $C \times E$}
 Here we want to give another description of $\Aut(S)$ as the quotient of the Normalizer of the diagonal subgroup
$\De_G$ (inside 
the automorphism group of $C \times E$)  divided by  $\De_G$, leading to \eqref{min-product}.

Observe that  condition \eqref{adding-prime} 
$$ \ga' \circ \tilde\psi = \tilde\psi \circ \ga $$
 spells out as 

\begin{equation}\label{first-condition}  \tilde\psi_1 (\ga_1(t)) =   \ga'_1 (\tilde\psi_1 (t)) , \end{equation}
\begin{equation}\label{second-condition}   \tilde\psi_2 (\ga_1(t), \ga_2 (t,z))) = \ga'_2 ( \tilde\psi_1 (t),  \tilde\psi_2 (t,z))). \end{equation}

Conversely, any such map $\tilde\psi$ satisfying the above two equations descends to $S$.

\begin{lemma}\label{char}
The two subgroups $\pi_E, \pi_C\times \pi_E$ are  invariant under the action of $\Aut(S)$.
\end{lemma}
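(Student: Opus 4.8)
The plan is to realize both subgroups as kernels attached to the elliptic fibration $f\colon S\to B=C/G$ and to use that every $\psi\in\Aut(S)$ preserves $f$. I will use the homomorphism $\Ga\to\Aut(\HH)$, $\gamma\mapsto\gamma_1$, which is well defined since each lift splits as $\gamma(t,z)=(\gamma_1(t),\gamma_2(t,z))$; its image $Q$ is a cocompact Fuchsian group with $\HH/Q=B$, and the map is precisely $f_*\colon\Ga\to Q=\pi_1^{orb}(B)$.

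I would first dispose of $\pi_E$. Because $G$ acts faithfully on $C$, an element $\gamma$ with $\gamma_1=\id$ already lies in $\pi_C\times\pi_E$ and has trivial $\pi_C$-component, so $\pi_E=\ker f_*=\{\gamma\in\Ga:\gamma_1=\id\}$. For a lift $\tilde\psi$ of $\psi$, equation \eqref{first-condition} reads $(\psi_*\gamma)_1=\tilde\psi_1\,\gamma_1\,\tilde\psi_1^{-1}$; hence $\gamma_1=\id$ implies $(\psi_*\gamma)_1=\id$, giving $\psi_*(\pi_E)\subseteq\pi_E$ and, by bijectivity, equality. Consequently $\psi_*$ descends to the automorphism $\bar\psi_*$ of $Q=\Ga/\pi_E$ obtained by conjugating with $\tilde\psi_1$; this is the map induced on $\pi_1^{orb}(B)$ by the automorphism $\bar\psi$ of $B$ that $\psi$ induces through $f$.

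For $\pi_C\times\pi_E$ the task reduces, via $\pi_C\times\pi_E=f_*^{-1}(\pi_C)$ and $f_*\circ\psi_*=\bar\psi_*\circ f_*$, to showing $\bar\psi_*(\pi_C)=\pi_C$, where $\pi_C=\ker(Q\to G)$. Since $G$ acts faithfully on $E$, the composite $\mu\colon Q\twoheadrightarrow G\hookrightarrow\Aut(E)$ has kernel exactly $\pi_C$, and $\mu$ is nothing but the monodromy of the isotrivial fibre bundle $f$ over the complement $B^\circ$ of the finitely many multiple-fibre points: it is $C\to B$ that trivializes it. As $\psi$ carries $f$ to itself and acts on the fibre $E$ by a single element $A\in\Aut(E)$, it identifies this bundle with its $\bar\psi$-pullback, so that $\mu\circ\bar\psi_*=A\,\mu(\cdot)\,A^{-1}$ on $Q$. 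Conjugation by $A$ fixes $\id_E$, hence $\bar\psi_*(\ker\mu)=\ker\mu$, i.e.\ $\bar\psi_*(\pi_C)=\pi_C$, and the conclusion for $\pi_C\times\pi_E$ follows.

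The main obstacle is precisely the identity $\bar\psi_*(\pi_C)=\pi_C$. It is not formal: $\pi_C$ is a finite-index normal surface subgroup of the Fuchsian group $Q$ and is in general not characteristic (already when $Q$ is abelian such subgroups are moved around by $\Aut(Q)$), so a bare computation with lifts only reduces the problem to ``$\tilde\psi_1$ normalizes $\pi_C$'' without settling it. What makes it work is the geometric input that $\pi_C=\ker\mu$ with $\mu$ the intrinsic monodromy of $f$, together with the two faithfulness hypotheses from the minimal realization of Remark \ref{faithful} ($G$ acting faithfully on $C$ gives $\pi_E=\ker f_*$, and on $E$ gives $\pi_C=\ker\mu$) and the fact that $\psi$ preserves $f$ and therefore acts on the generic fibre by one automorphism of $E$.
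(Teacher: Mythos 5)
Your proposal is correct and takes essentially the same approach as the paper: both proofs rest on the facts that every automorphism preserves the pluricanonical (Iitaka) fibration $f$, that the covering $C\to C/G$ (equivalently the subgroup $\pi_C$) is intrinsically characterized as the kernel of the monodromy of the quasi-bundle $f$, and on the two faithfulness assumptions of the minimal realization. Yours is a group-theoretic rendering (kernels of $f_*$ and of $\mu$, conjugation formulas) of the paper's geometric one (lifting automorphisms to $C\times E$ as the normalization of $C\times_{C/G}S$), and the crucial step—that an automorphism of $S$ covering $\bar\psi$ forces $\mu\circ\bar\psi_*$ to be conjugate to $\mu$, hence preserves its kernel—is invoked at essentially the same level of justification in both arguments.
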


\begin{proof}
$f : S \ra C/G$ is the pluricanonical map of $S$ (Iitaka fibration of $S$), hence $f$ is preserved by automorphisms.

$f$ is a quasi-bundle, that is, all smooth fibres are isomorphic to $E$, and the product  $ C \times E$
is the normalization of the fibre product $ C \times_{C/G}S$, where the covering $C \ra C/G$
corresponds to the kernel of the monodromy map of $\pi_1(C^*) \ra \Aut (E)$, where $C^*$ is the complement of
the critical values of $f$.

Hence each  automorphism of $S$ also acts on $C$ and on $C \times E$; and since every holomorphic map of $E$ into $C$ is constant, it also acts on $E$. Therefore the subgroups $\pi_E, \pi_C\times \pi_E$ are  invariant under the action of $\Aut(S)$.
\end{proof}

Looking at elements $\ga$ in the subgroup $\pi_E < \Ga$, acting by translations on $\CC$, we obtain from \eqref{adding-prime},
since  we have $ \ga (t,z) = (t , z + \ga_2 ),$
 $$    \tilde\psi_2 (t , z + \ga_2 ) =    \tilde\psi_2 (t,z) + \ga'_2 . $$
 Hence $\partial \tilde\psi_2 (t,z)  / \partial z$ is $\pi_E$-periodic, thus constant as a function of $z$, and as usual  
 we can write 
 $$ \tilde\psi_2 (t,z) = \la z + \phi(t),$$
 and, for $t$ fixed,   $ \tilde\psi_2$ descends, as already claimed, to an automorphism of $E = \CC / \pi_E$.

A similar calculation shows  that $\ga_2 (t,z)$ is an affine function of $z$, $\ga_2 (t,z) = \la_{\ga} z + c_{\ga} (t)$
 (here $\la_\ga $ is a constant).

Looking at elements $\ga$ in  the subgroup $\pi_C < \Ga$, we see that equation \eqref{first-condition} 
says that $ \tilde\psi_1$ descends to an automorphism of $C =  \HH / \pi_C$,
which we shall denote by $\psi_1$. 

Hence by Lemma \ref{char} $\tilde\psi \in \widetilde{\sN}_{\Ga}$ descends to an automorphism $\Psi$ of $C \times E$ and we conclude that 
\begin{equation} \label{min-product}\Aut(S) = \sN_{G} / G, \end{equation}
where $\sN_{G}$ is the normalizer of $G$ inside $ \Aut( C \times E)$.

Thus  $\Psi$ is a  lift of $\psi$ to $C \times E$, and we record that
\begin{equation} \label{product-aut}\Psi(x,z) = (\psi_1 (x) , \la z + \Phi (x)),\end{equation}
 because  $\phi(\ga_1(t)) = \phi(t)$ for $\ga_1 \in \pi_C$ implies    that $\phi$ descends to 
 a holomorphic map $\Phi : C \ra E$.

Indeed,  putting, for $g \in G$,  $g' : = \Psi_* (g)  $ we have that   the conditions \eqref{first-condition} and \eqref{second-condition}
read out as:
$$  g' \psi_1 (x) = \psi_1 (g x) , \ \  g' (\la z + \Phi (x)) =  \la (g z) + \Phi (g x) .$$

Observing then  that,  for $g\in G$, $ g \mapsto g'$ is the effect of conjugating by $\Psi$, we obtain, setting $ g (z) = \e z + b$,
that  \begin{equation}\label{conj}
g' (z) = \e z + \la b - \e \Phi (\psi_1^{-1} (x)) + \Phi g  (\psi_1^{-1} (x)),
\end{equation}
hence condition \eqref{second-condition} is equivalent, setting
\begin{equation}\label{Udef}  U(x) : = \Phi (g x)  -  \e \Phi ( x) ,
\end{equation}  to:
\begin{equation}\label{U}  U(x) = U (\psi_1(x)) .
\end{equation}

This formula is particularly interesting in the following first case  (where $\e = 1\, \forall\, g$).

\subsection{The case where $G$ acts on the elliptic curve $E$ by translations,
 and $S$ is  called  a pseudo-elliptic  surface} In this case, we fix a point of $E$, called $0$, and thus  $E$ is a  group which 
acts on $S$ by translations, namely to $w \in E$ corresponds 
$$ \tau_w : C \times E \ra  C \times E , \ \tau_w (x,z) := (x, z + w)$$
 which descends to an automorphism of $S$, which we denote again  by $ \tau_w$,  and indeed we see easily that $\Aut^0(S) \cong E$.

 Moreover, since $\Ga$ centralizes $\pi_E$ under this assumption, it follows that,
 for all elements $\ga \in \Ga$,
 $$\ga_2 (t,z) =  z + c_{\ga} (t).$$

 \begin{proposition}\label{G-translates}
 Assume that $S=(C \times E)/\Delta_G$ is a pseudo-elliptic surface, that is,  $G$ acts on the elliptic curve  $E$ via translations.
 
 Then $\Aut^0(S) \cong E$ and $\Aut(S) \cong  \sN_{G} / G$, 
where $\sN_{G}$ is the normalizer of $G$ inside $ \Aut( C \times E)$.

 Moreover, letting  $Z(\De_G)$  be   the centre  of $\De_G \cong G$, and letting $\sC(G)$ be the centralizer  of $G$
 inside $\Aut(C)$, then

 $$ \Aut_{\QQ}(S) \cong (E  \times \sC(G) )/ Z(\De_G)  \Rightarrow \Aut_{\QQ}(S) /\Aut^0(S)  \cong  \sC(G) / Z(G) .$$

 \end{proposition}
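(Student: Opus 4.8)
The plan is to realize $\Aut_\QQ(S)$ as the image of an explicit homomorphism out of $E \times \sC(G)$ and to extract both its kernel and its precise image from the action on $H^1(S,\QQ)$. Throughout I use $\Aut(S) = \sN_G/G$ from \eqref{min-product}, the identification $\Aut^0(S) \cong E$ established above, and the K\"unneth decomposition $H^1(S,\QQ) = H^1(C\times E,\QQ)^G = H^1(C,\QQ)^G \oplus H^1(E,\QQ)$, valid because $G$ acts on $E$ by translations and hence trivially on $H^*(E,\QQ)$; here $H^1(C,\QQ)^G = H^1(B,\QQ)$.

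First I would construct $\rho \colon E \times \sC(G) \to \Aut(S)$ sending $(w,\phi)$ to the automorphism descended from $\Psi_{w,\phi}(x,z) = (\phi(x),\, z+w)$. Since $\phi$ centralizes $G$ on $C$ and translations commute with the translation action of $G$ on $E$, the map $\Psi_{w,\phi}$ centralizes $\De_G$, in particular lies in $\sN_G$, so $\rho$ is well defined. An element $(w,\phi)$ lies in $\Ker\rho$ exactly when $\Psi_{w,\phi}\in\De_G$, i.e. when there is $g\in G$ with $\phi = g_C$ and $w = b_g$ (writing the action of $g$ on $E$ as $z\mapsto z+b_g$); the further requirement $\phi = g_C\in\sC(G)$ forces $g\in Z(G)$. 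Using that $G$ acts faithfully on both $C$ and $E$, this identifies $\Ker\rho$ with the image of $Z(\De_G)$ under $g\mapsto(b_g,g_C)$, so $\rho$ induces an injection $(E\times\sC(G))/Z(\De_G) \hookrightarrow \Aut(S)$.

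For the inclusion $\Aut_\QQ(S)\subseteq\im\rho$, take $\psi\in\Aut_\QQ(S)$ with a lift $\Psi(x,z) = (\psi_1(x),\,\la z+\Phi(x))$ as in \eqref{product-aut}. In the basis $H^1(C)\oplus H^1(E)$ the pullback $\Psi^*$ is upper triangular, with diagonal blocks $\psi_1^*$ and multiplication by $\la$ and off-diagonal block $\Phi^*\colon H^1(E)\to H^1(C)$. Restricting to $H^1(S) = H^1(B)\oplus H^1(E)$ and demanding that $\Psi^*$ be the identity forces $\la = 1$; forces $\Phi^* = 0$ on $H^1(E)$, hence $\Phi$ constant (a nonconstant holomorphic $\Phi\colon C\to E$ pulls $dz$ back to a nonzero holomorphic $1$-form on $C$); and forces $\bar\psi_1^* = \id$ on $H^1(B)$. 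With $\Phi$ constant and $\la = 1$, the conjugation formula \eqref{conj} gives for the conjugate $g' = \Psi_*(g)$ that $b_{g'} = b_g$; since $g\mapsto b_g$ is injective ($G$ faithful on $E$) this yields $g' = g$, whence $\psi_1 g_C \psi_1^{-1} = g_C$, i.e. $\psi_1\in\sC(G)$. Thus $\psi = \rho(c,\psi_1)$ for the constant value $c$ of $\Phi$.

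Finally, for $\im\rho\subseteq\Aut_\QQ(S)$ the translations $\tau_w$ lie in $\Aut^0(S)\subseteq\Aut_\QQ(S)$, so it remains to show each $\phi\in\sC(G)$ induces an automorphism acting trivially on $H^*(S,\QQ) = H^*(C\times E,\QQ)^G$. As $\phi$ fixes $H^*(E,\QQ)$ and acts on $H^*(C,\QQ)^G$ through the descended automorphism $\bar\phi$ of $B$, the K\"unneth decomposition reduces this to the single assertion that $\bar\phi$ acts trivially on $H^1(B,\QQ)$. This is the main obstacle, and where the genuine content lies: I would prove it by exploiting that $\phi$ centralizes the deck group $G$ of $C\to B$, so that $\phi^*$ respects the $G$-isotypical decomposition of $H^1(C,\QQ)$ and acts on the trivial isotypical piece $H^1(B,\QQ)$ compatibly with the covering $C\to B\to B/\langle\bar\phi\rangle$, and then argue that in the pseudo-elliptic situation this action must be trivial. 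Granting this, the two inclusions give $\Aut_\QQ(S)\cong(E\times\sC(G))/Z(\De_G)$; since $\Aut^0(S) = \rho(E\times\{\id\})$ and the subgroup generated by $E\times\{\id\}$ and $Z(\De_G)$ is $E\times Z(G)$ (with $Z(G)\hookrightarrow\sC(G)$ via $g\mapsto g_C$), quotienting yields $\Aut_\QQ(S)/\Aut^0(S)\cong\sC(G)/Z(G)$.
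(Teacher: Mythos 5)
Your construction of $\rho$, the computation $\Ker\rho=Z(\De_G)$, the inclusion $\Aut_\QQ(S)\subseteq\im\rho$, and the final identification of the quotient by $\Aut^0(S)$ are all correct, and they are essentially the paper's own argument: the paper likewise reduces modulo $\Aut^0(S)\cong E$, gets $\Phi$ constant from preservation of the fibration $p\colon S\to E/G$, gets $\la=1$ from Serrano's homology exact sequence for $p$ (explicitly offering your K\"unneth argument as an alternative), gets $\psi_1\in\sC(G)$ from normalization of $\De_G$ together with faithfulness of the $G$-action on $E$, and computes the kernel exactly as you do.

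The step you isolate as ``the main obstacle'' and then grant --- that every $\phi\in\sC(G)$ induces a numerically trivial automorphism, equivalently that the descended automorphism $\bar\phi$ of $B=C/G$ acts trivially on $H^1(B,\QQ)$ --- is a genuine gap, and moreover your proposed route for closing it cannot work, because the claim is false in general. For instance, let $B$ be a genus~$2$ curve with a nontrivial involution $\iota$ (say $y^2=x^6-1$, $\iota(x,y)=(-x,y)$), let $\epsilon\in\Jac(B)[2]$ be a nonzero $\iota$-invariant class (e.g.\ $[(1,0)-(-1,0)]$), and let $C\to B$ be the associated \'etale double cover with deck group $G=\ZZ/2$, acting on $E$ by a $2$-torsion translation. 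Any lift $\phi$ of $\iota$ to $C$ normalizes, hence centralizes, the deck group, so $\phi\in\sC(G)$; but $\rho(0,\phi)$ acts on $H^1(B,\QQ)\subset H^1(S,\QQ)$ by $\iota^*\neq\id$, so $\im\rho\not\subseteq\Aut_\QQ(S)$. What is true in general is only the inclusion you proved, $\Aut_\QQ(S)/\Aut^0(S)\hookrightarrow \sC(G)/Z(G)$; it becomes an equality exactly when every element of $\sC(G)$ descends to an automorphism of $B$ acting trivially on $H^1(B,\QQ)$, which is automatic when $B\cong\PP^1$ (the situation of the examples of \cite{CatLiu21} invoked in the Remark following the Proposition). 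You should know, however, that the paper's own proof stops precisely where yours does: after deriving the same necessary conditions it declares $\Aut_\QQ(S)$ to be ``a quotient group of $E\times\sC(G)$'' and computes the kernel, never verifying the reverse inclusion. So your write-up is exactly as complete as the paper's, and your diagnosis of where the real content lies is correct; the honest conclusion is that the Proposition as stated needs either the hypothesis $C/G\cong\PP^1$ or $\sC(G)$ replaced by its subgroup of elements inducing the trivial action on $H^1(B,\QQ)$. (The later applications, e.g.\ Theorem \ref{pseudo-elliptic}, only use the inclusion you did prove, so they are unaffected.)
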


 \begin{proof} 
 
 The description of $\Aut(S)$ was already given before.

Assume first that $\psi$ is numerically trivial. Then the fibration $S \ra E/G$ is preserved, hence
$\Phi(x)$ must be constant. Up to using the subgroup $E \cong \Aut^0(S)$ (the elements of $E$ have  $\la=1, \Phi = const$) we may assume that $\Phi=0$.

We are then left with automorphisms $\psi$ such that  $\Psi (x,z) = (\psi_1(x) , \la z )$.

Now, the fibration $S \ra E/G$ has   all  the fibres smooth, hence there is an exact sequence
$$1 \ra  \pi_C \ra \Ga \ra \pi_1(E/G) = : \pi_E' \ra 1.$$

Since $\pi_C$ and $\pi_E$ commute, the homomorphism of $\pi_E' \ra \Out(\pi_C)$ factors
through the surjection  $\pi_E' \ra G$ and the action of $G$ on $\pi_C$.

Hence, abelianizing,  for the first homology of $S$ we have an exact sequence
(see \cite[Proposition 2.2]{Ser91})
$$ 0 \ra \ZZ/m  \ra H_1(C, \ZZ)_G \ra H_1(S, \ZZ) \ra \pi_E' \ra 0,$$
where $H_1(C, \ZZ)_G$ is the group of coinvariants (the quotient of $H_1(C, \ZZ)$ by the subgroup generated by $\{ gv - v\}$), and $m$ is the largest integer dividing  the class  in $H^2(S, \ZZ) /H^2(S, \ZZ)_\tor$ of a fibre  of $p : S \ra E/G$.

Since $\psi$ is numerically trivial, it acts on the fibration $p$ hence on  this exact sequence and  we conclude that it acts trivially on $\pi_E' $,
 since it acts trivially on $\pi_E' \otimes \QQ$ (this
 amounts to saying  that $\la=1$; alternatively this can be deduced using the K\"unneth formula for $H^1(S,\QQ)= 
H^1(C,\QQ)^G\oplus H^1(E,\QQ)^G=
H^1(C,\QQ)^G\oplus H^1(E,\QQ)$).

We can thus  continue assuming  that $\la=1$, $\Phi=0$. Then  $\psi_1( gx) = g \psi_1( x)$,  which precisely means that 
$\psi_1 \in Aut(C)$
centralizes $G$. Hence the group of numerically trivial automorphisms is  a quotient group  of the direct product 
$ E \times \sC(G)$. The kernel of the surjection is the intersection with $\De_G$: but $ (\psi_1(x), z + u)$
is an element of $\De_G$ if and only if $u= g$ and $\psi_1 = g$, hence our assertions concerning numerically trivial automorphisms follow.
 
 \end{proof}
 
   \begin{remark}
   The previous Proposition allows to give examples of arbitrarily large quotient groups $ \Aut_{\QQ}(S)/ \Aut^0(S)$,
   as already done in \cite{CatLiu21}.
   \end{remark}
 
  \begin{remark}
 In the proof we saw   more concretely that

$$ \Aut(S) = \Aut^S (C \times E)/ \De_G,$$ 
 where
 $$  \Aut^S (C \times E) : = \{ \Psi \mid \Psi (x,z) = (\psi_1(x) , \la z + \Phi(x)), $$
 $$    \la \in \CC, \  \la \in \Aut( E, 0),  \la   G = G \subset E , g' \psi_1 (x) = \psi_1 (g x), U(x) = U (\psi_1(x)) \} $$
 where $g'$ is as in  \eqref{conj}, and $U$ is as in \eqref{Udef}.
 
 And that $\sC(G)$ is the subgroup  $\sH$  of the elements for which  $\la=1, \Phi=0$.
 \end{remark}

 \begin{theorem}\label{pseudo-elliptic}
 Assume that $S$ is a pseudo-elliptic surface, that is, $$S=(C \times E)/\Delta_G$$ is isogenous to a higher elliptic product, and $G$ acts on the elliptic curve  $E$ via translations.
 
 Then $\Aut^0(S) \cong E$ 
and the subgroup $ \Aut_{\ZZ}(S)$ coincides with $\Aut^0(S)$, except exactly when we have  pseudo-elliptic  surfaces 
 such that
 
 i)   $G= \ZZ/2m$, where  $m$ is an odd integer, and 
 
 ii) $C/G= \PP^1$ with  $ C \ra \PP^1$  branched in four points
 with local monodromies $\{ m,m, 2, -2\}$: for these surfaces we  have
 \[
 |\Aut_{\ZZ}(S)/ \Aut^0(S)| = 2
 \]
 and $ \Aut_{\ZZ}(S)$ contains  an involution permuting the first two multiple fibres. 
 \end{theorem}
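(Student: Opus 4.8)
The plan is to single out, among the numerically trivial automorphisms classified in Proposition~\ref{G-translates}, exactly those acting trivially on integral cohomology, and then to exhibit and verify the exceptional involution. Since $G$ acts on $E$ by translations it is a subgroup of the torsion of $E$, hence abelian, so $Z(G)=G$ and Proposition~\ref{G-translates} gives $\Aut^0(S)\cong E$ together with $\Aut_\QQ(S)/\Aut^0(S)\cong \sC(G)/G$. By \eqref{product-aut} a class here is represented by $\Psi(x,z)=(\psi_1(x),z)$ with $\psi_1$ in the centralizer $\sC(G)\subseteq\Aut(C)$; as $\psi_1$ centralizes the Galois group $G$ of $C\ra B:=C/G$, it descends to $\bar\psi_1\in\Aut(B)$, and the induced map $\sC(G)/G\hookrightarrow\Aut(B)$ is injective with kernel $G$. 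Writing $H^1(S,\QQ)=H^1(B,\QQ)\oplus H^1(E,\QQ)$ with $\psi$ acting by $\bar\psi_1^*\oplus\mathrm{id}$, numerical triviality is equivalent to $\bar\psi_1$ acting trivially on $H^1(B,\QQ)$. If $g(B)\ge 2$ the Lefschetz fixed point formula (equivalently, the triviality of $\Aut_\QQ$ for a curve of genus $\ge 2$) forces $\bar\psi_1=\mathrm{id}$, hence $\psi_1\in G$ and $\psi\in\Aut^0(S)$; so already $\Aut_\QQ(S)=\Aut^0(S)$ and no exception arises. I may thus assume $g(B)\le 1$.

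Now take $\psi\in\Aut_\ZZ(S)$. It permutes the multiple fibres of $f\colon S\ra B$, which lie over the branch points of $C\ra B$ with multiplicities equal to the orders of the local monodromies, the permutation being the one $\bar\psi_1$ induces on the branch set. By Lemma~\ref{m=2}, $\psi$ fixes every irreducible multiple fibre unless we are in the special configuration of that lemma: $g(B)\le 1$, exactly two multiple fibres of multiplicity $2$ (necessarily isomorphic), all remaining ones of odd multiplicity, in which case $\psi$ may interchange the two multiplicity-$2$ fibres. If $\psi$ fixes all multiple fibres then $\bar\psi_1$ fixes every branch point; combined with triviality on $H^1(B,\QQ)$ this yields $\bar\psi_1=\mathrm{id}$ (for $g(B)=1$ a numerically trivial automorphism is a translation, hence trivial as soon as it has a fixed point, and for $g(B)=0$ any element of $\mathrm{PGL}_2$ fixing the $\ge 3$ branch points is trivial), so $\psi\in\Aut^0(S)$.

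The remaining possibility is that $\bar\psi_1$ interchanges the two multiplicity-$2$ branch points and fixes all others. As $\bar\psi_1^2$ then fixes every branch point it is the identity, so $\bar\psi_1$ is an involution; compatibility of $\psi_1\in\sC(G)$ with the abelian monodromy forces the two swapped branch points to carry the \emph{same} order-$2$ monodromy $a$, while each fixed branch point has odd-order monodromy (Lemma~\ref{m=2}). If $g(B)=1$ then $\bar\psi_1$ is a nontrivial translation with no fixed points, so there are no fixed branch points, leaving a priori a double cover of an elliptic base branched at two points. If $g(B)=0$ then, since $\mathrm{genus}(C)\ge 2$ rules out having no fixed branch point and an involution of $\PP^1$ has only two fixed points, there are exactly two fixed branch points, with mutually inverse odd-order monodromies $c,-c$ (so that the product of all monodromies is $2a=0$); surjectivity of the monodromy gives $G=\langle a\rangle\times\langle c\rangle\cong\ZZ/2m$ with $m=\mathrm{ord}(c)$ odd and local monodromies $\{m,m,2,-2\}$, i.e. precisely the surfaces of (i)--(ii). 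Since the swap involution of $\PP^1$ fixing $b_3,b_4$ and exchanging $b_1,b_2$ is unique, this already gives $|\Aut_\ZZ(S)/\Aut^0(S)|\le 2$. Conversely, such a cover exists by Riemann's existence theorem, the involution lifts to $\psi_1\in\sC(G)$, and the resulting $\psi$ has order $2$ in $\Aut_\QQ(S)/\Aut^0(S)$.

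It remains to prove that this $\psi$ is genuinely cohomologically trivial and, dually, that the elliptic-base candidate is not. By Remark~\ref{poincare} this reduces to the action on $H_1(S,\ZZ)$ and on $H^2(S,\ZZ)$; as numerical triviality is already in hand, the entire content lies in the finite torsion of $H_1(S,\ZZ)$, which the K\"unneth decomposition cannot detect. I would compute it on the universal cover $\HH\times\CC$ of $S=(\HH\times\CC)/\Ga$: fixing an explicit lift $\tilde\psi$ normalized as in \eqref{adding-prime}, I would determine the automorphism it induces on $\Ga=\pi_1(S)$ and hence on $\Ga^{\mathrm{ab}}=H_1(S,\ZZ)$, and check that on the torsion subgroup it is the identity exactly for $B=\PP^1$ and nontrivial for the $g(B)=1$ candidate (equivalently, that the two swapped fibres satisfy $[F_1]=[F_2]\in H^2(S,\ZZ)$ only in the $\PP^1$ case; note $q_f=q(S)-g(B)=1$, so Corollary~\ref{cor: mult fib qf=0} does not force $[F_1]\neq[F_2]$ here). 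This explicit universal-cover bookkeeping on the torsion is the \textbf{main obstacle}; everything preceding it is structural.
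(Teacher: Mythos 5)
Your structural reduction is sound and it reproduces, essentially step for step, the first half of the paper's argument: the description of numerically trivial automorphisms via Proposition \ref{G-translates}, the triviality of $\bar\psi_1$ when $g(B)\geq 2$, the use of Lemma \ref{m=2} to isolate the possible swap of two multiplicity-$2$ fibres over a base of genus $\leq 1$, and the monodromy bookkeeping in the genus-$0$ case forcing exactly four branch points, $G\cong \ZZ/2m$ with $m$ odd, and local monodromies $\{m,m,2,-2\}$. The construction of the cover by Riemann existence and the lift of the involution are also as in the paper.

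There is, however, a genuine gap, and you flag it yourself: both decisive claims --- (a) that the genus-$1$ candidate contributes nothing, and (b) that the genus-$0$ involution is cohomologically, not merely numerically, trivial --- are deferred to a computation you describe only in outline (``I would compute \dots''). Since $\Aut_\ZZ(S)\subset \Aut_\QQ(S)$ and you have already placed the swap in $\Aut_\QQ(S)$, claim (b) is exactly the content separating $\Aut_\ZZ$ from $\Aut_\QQ$, and claim (a) is what makes the characterization ``except exactly''; neither is established, so the theorem is not proved. What the paper does at precisely this point is the part its introduction calls the hard part: from the fibre-product structure $\Ga\subset \pi^{orb}\times \pi'_E$ it extracts the explicit presentation \eqref{first-hom} of $H_1(S,\ZZ)$, with lifts $\ga'_j=(\ga_j,x_j)$ normalized so that $\sum_j x_j=0$. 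In the genus-$1$ configuration this normalization clashes with the identification of the two swapped generators (one gets $x_1=x_2$ together with $x_1+x_2=0$, hence $x_1=0$, impossible for an order-$2$ local monodromy), which is how that case is killed; in the genus-$0$ configuration the same presentation shows that $H_1(S,\ZZ)$ is \emph{torsion free}, whence by Remark \ref{poincare} all of $H^*(S,\ZZ)$ is torsion free, so $\Aut_\ZZ(S)=\Aut_\QQ(S)$ there, and triviality of the involution on $H^2(S,\ZZ)\cong \ZZ^2$ follows from $b_2(S)=2$ and preservation of the two fibrations. Note also that the shape of the computation you anticipate is not the one that works: you plan to ``check the action on the torsion subgroup'' and expect it to be nontrivial for the elliptic base and trivial for $\PP^1$, whereas in the exceptional $\PP^1$ case the resolution is that there is no torsion at all, and the genus-$1$ case is disposed of by the arithmetic incompatibility above rather than by exhibiting a nontrivial torsion action. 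So the deferred step is not a routine verification but the core of the theorem, and your proposal stops short of it.
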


\begin{proof}

\medskip
Using the notation of the previous proposition \ref{G-translates} and its proof we start with
 a preliminary observation: if we can show that  $\psi_1$ acts as the identity on 
$C/G$, then  $\psi_1$ is a lift of the identity, therefore $\psi_1$ is an element  $ g_0 \in G$. But then the product of $\Psi $ and $\tau_{g_0}$ belongs to $\De_G$, hence we conclude that $\psi \in \Aut^0(S)$.

Let us then  see  when the condition that  $\psi$  is cohomologically trivial implies that $\psi_1$  acts as the identity on $C/G$.

Since  $\psi$ acts trivially on the  cohomology groups of the quotient $C/G$, it acts  trivially
on $C/G$ if $ C/G$ has genus $ \geq 2$, and by translations on $C/G$ if $C/G$ has genus $1$.

Hence we are reduced to considering the two cases where $C/G$ has genus $1$, respectively  
where $C/G \cong \PP^1$.

In both cases we can use Lemma \ref{m=2} = Principle 3  of \cite{CatLiu21}, saying that a permutation of two branch points is only possible
if the local ($G$-valued)-monodromies are the same,  have order 2, and   moreover  all other monodromies
have odd order.

 We begin now with the hard part of the proof, which  actually requires doing calculations on the universal covering:
  because, if we have a translation $t$ on an elliptic curve $E = \CC / \Lambda$, and pick a specified  lift  of $t$  to  $\CC$, 
 a translation $ z \mapsto z + x$,
in order  to infer that $x= 0$ it is not sufficient to show that the  translation $t$ is trivial.

\subsection{Step (genus 1)}

Assume now that  $C/G$ has genus $1$: since the action of $\psi_1$ on $C/G$ is via a translation, it follows that if two branch points are permuted, then they have the same monodromies of order $2$, they differ by translation by a 2-torsion element,
and there are no more branch points than these two.

In order to proceed with our analysis, let us  describe the group $H_1(S, \ZZ)$ in general.

We have an exact sequence $ 1 \ra \pi_C \ra \pi_1^{orb}(C/G) \ra G \ra 1$, with 

{

\begin{equation}\label{orbifold}
 \pi_1^{orb}(C/G) = 
\left\langle  \al_1, \beta_1, \dots, \al_h, \beta_h , \ga_1, \dots, \ga_r\mid 
\prod_i [\al_i, \beta_i]  \ga_1\dots \ga_r=1, \; \ga_j^{m_j}= 1 \forall j
\right\rangle 
\end{equation}}
 where $h$ denotes the genus of $C/G$, $\al_1, \beta_1, \dots, \al_h, \beta_h $ is a symplectic basis 
of $H_1(C/G, \ZZ)$, $\ga_j $ is the image of a geometric loop around the branch point $p_j$, and $m_j$ is the order of the local monodromy at $p_j$.
Set also for convenience $\pi^{orb} : = \pi_1^{orb}(C/G)$.

Then $ \Ga $ is a subgroup of $ \pi^{orb}\times \pi'_E$, which  has a surjection onto $ G \times G$,
and $\Ga = \pi_1(S) $ is the subgroup inverse image of $\De_G < G \times G$.

Since  $G$ is abelian, $\De_G = \Ker (G \times G \ra G)$,
via the surjection $(g_1, g_2) \mapsto g_1 - g_2$.
Hence also $\Ga= \Ker ( \pi^{orb} \times \pi'_E \ra G)$.

The surjection $\Ga \ra \pi^{orb} $ has $\mathrm{Kernel} = \{1\} \times 
\Lambda$, where $  \Lambda : = \pi_E $.

In particular $\Ga \subset \pi^{orb}  \times \pi'_E$ is generated by $ \{1\} \times 
\Lambda$, and by lifts to $\Ga$ of generators of   $\pi^{orb}$, namely of
{
$$\ga_1, \dots, \ga_r, \; \ga_{r+1} : = \al_1, \; \ga_{r+2}: = \be_1,  \dots , \ga_{r + 2h} : = \be_{h},$$}
 acting on $C \times E$ and whose further lifts to $\HH \times \CC$
have the form 
{ $$ \ga'_j (t,z) = (\ga'_{j,1}  (t), z + x_j),$$}
where the  translation $ z \mapsto z + x_j$ is in $\pi'_E$, and $m_j$ is the smallest multiple
 such that $ m_j x_j \in \pi_E$.

\begin{observation}
Then the first homology group $H_1(S,\ZZ)$, the abelianization of $\Ga$,
 is generated by the image of $  \pi_E = \Lambda $ and by the images of $\ga'_1, \dots, \ga'_{r+ 2h}$.
 \end{observation}

Moreover, setting $H^{orb} : = (\pi^{orb})^{ab} $, we get
\[
H^{orb} = H_1^{orb}(C, \ZZ) = \left( 
\bigoplus_{j=1}^h\ZZ\alpha_j  \oplus 
\bigoplus_{j=1}^h\ZZ\beta_j \oplus 
\bigoplus_{i=1}^r (\ZZ/m_i)  \ga_i  \right)/ \left\langle \sum_j \ga_j  \right\rangle.
\]
and   we obtain a homomomorphism 
$$ H_1(S,\ZZ) = \Ga^{ab} \ra H^{orb} \times \pi'_E,$$
and the exact sequence
$$ 1 \ra \Lambda \ra \Ga \ra \pi^{orb} \ra 1$$
yields an exact sequence
$$ 0 \ra \Lambda \ra H_1(S,\ZZ)   \ra H^{orb} \ra 0,$$
 because every commutator in $\Ga$ is a commutator in $\pi^{orb}$.
 
 By the same argument we have an inclusion
 $$ H_1(S,\ZZ) \subset   H^{orb} \times \pi'_E = \left(  H_1(B,\ZZ) \oplus \left( \left(\bigoplus_{i=1}^r (\ZZ / m_i) \ga_i\right)  / \left\langle \sum_j \ga_j\right\rangle \right)  \right) \times \pi'_E ,$$
because  the  commutators of $\pi^{orb} \times \pi'_E$ consist of commutators in $\pi^{orb} $.
 
 Finally, the generators $\ga'_j $ map to $(\ga_j, x_j)$.
 
  Hence we  infer that,  if {$\ga'_i  =  \ga'_j$,  
then  $x_i= x_j $.}

 The above description of $H_1(S, \ZZ)$ is equivalent to  the following presentation,  
 given by Friedman and Morgan in \cite[page 200, Lemma 7.6 and Theorem 7.7]{FM94} (but they obtained it just  in
   the special case where  the surface is obtained from a product $ D \times E$  via logarithmic transformations):
\begin{equation}\label{first-hom}
H_1(S, \ZZ) \cong \left( H_1(B,\ZZ) \oplus \Lambda \oplus (\bigoplus_{i=1}^r \ZZ \ga_i )\right) / \left\langle \sum_{j}\ga_j, m_i\ga_i- m_ix_i, 1\leq i\leq r\right\rangle,
\end{equation}
 where $\sum_i x_i =0$.  This condition  will lead in most cases to a contradiction.
\footnote{ Serrano (cf.~\cite[Proof of Theorem~4.1]{Ser91}) established a weaker result,  
the  exact sequence of abelian groups, which we shall use later on
\[
0\rightarrow H_1(E, \ZZ) \rightarrow H_1(S, \ZZ) \rightarrow H_1^{orb}(C, \ZZ) \rightarrow 0.
\]
}

In the case where $C/G$ has genus $1$,  this is indeed a contradiction since then $x_1= x_2$ and $x_1 + x_2=0$ implies that $x_1 = x_2 = 0$,
a contradiction.

\subsection{Step (genus 0)}
In the case where $C/G \cong \PP^1$, the action of $\psi$ on $C/G$ is the identity (and we are done) if there are at least 5 branch points, because by Lemma \ref{m=2}  only two branch points can be moved by $\psi$.

 Since the number of branch points is at least 4, if $\psi_1$ does not act trivially on $C/G$, then  exactly two local multiplicities are equal to $2$, and it remains to consider the case where we have exactly 
four branch points and, since the sum of the local monodromies into the abelian group $G$ is zero,
we must have monodromies $ g_1 = g_2$ of order $2$, and $ g_3 = - g_4$ of odd order $m$.

In the above notation, again we must have that, 
if  $\ga'_1 = \ga'_2$,
then 
$x_1= x_2$ and conversely, if $x_1= x_2$, then $\ga'_1 = (g_1, x_1) = (g_2, x_2) =  \ga'_2$.

Hence, eliminating $\ga'_2$, we remain with the relations 
$$ 2 \ga'_1 + \ga'_3  + \ga'_4  =0, \ 2 \ga'_1  = (0, 2 x_1) , \ m \ga'_3  = (0, m x_3) .$$
With the first we eliminate $\ga'_4 $, and remain with the two relations
$ 2 \ga'_1  = (0, 2 x_1) , \ m \ga'_3  = (0, m x_3) $. 

Since however $2 x_1 \in \Lambda$ is not divisible by $2$ in $\Lam$,
and $ m x_3 \in \Lambda$ is not divisible by any nontrivial divisor of $m$, it follows that
$H_1(S, \ZZ)$ is torsion free.

Moreover, we have that $G = \ZZ/2 \oplus \ZZ/m \cong \ZZ / 2m$.

\subsection{Existence part}

 To show that this case occurs we   construct a $G =  \ZZ / 2m$-covering $C$ of $\PP^1$
branched over four points with local monodromies $(m,m, 2, -2)$, and we consider an embedding 
of $G$ as a subgroup of  an elliptic curve $E$
(thus acting by translations).

Clearly the involution $\iota$ which exchanges the first two branch points
lifts to an involution $\s$ of $C$.

Then the corresponding $S : = (C \times E)/ \De_G$ has $q(S)=1$, $\chi(S)=0$, hence $p_g(S)=0$.

We take the automorphism $\psi$ of $S$ induced by $\Psi : = \s \times Id_E$. 

Then its action on 
$H^1 (S, \ZZ) \cong H^1 (E/G , \ZZ) $ is trivial.  

Moreover, since the Euler number of $S$ is zero,
$H^2 (S, \ZZ) $ has rank $2$, and, being torsion free, it is $ \cong \ZZ^2$. Since 
$\psi$ preserves the two fibrations, it follows that it acts trivially on the cohomology of $S$.

Since any other automorphism $\psi_1$ that will not permute the two branch points with local monodromy of order $2$ must act as the identity on $C/G$, and is therefore  the identity, as we have argued before, 
our assertion that $ |\Aut_{\ZZ}(S)/ \Aut^0(S)| = 2$ is proven.
This completes the proof of Theorem \ref{pseudo-elliptic}.

\end{proof}

\section{Surfaces isogenous to a higher elliptic product, where $G$ does not act on the elliptic curve $E$ by translations}

\label{s:surfs}

We collect here some general observations and formulae concerning  the numerically trivial and cohomologically trivial automorphisms for this class of surfaces.

A key point is that any automorphism $\psi$ of $S$ preserves the canonical elliptic fibration, and the first question is to determine
 when it acts trivially on the rational cohomology of $S$. This is easier, since the rational cohomology of $S$ is the $G$-invariant part of the cohomology of $ C \times E$.  Whereas, there is no simple commutative algebra recipe for determining
the integral cohomology of the quotient: because, commutative algebra and homological algebra
provide spectral sequences which boil down to several exact sequences of abelian groups, such as  $ 0 \ra A_1 \ra A_2 \ra A_3\ra 0$.

But if an automorphism $\psi$ acts trivially on $A_1, A_3$, it does not necessarily act trivially on the extension $A_2$.

This is the reason why in most cases we need to calculate the fundamental group $\Ga = \pi_1(S)$ and, for doing so,
we need the orbifold fundamental groups of the coverings $ C \ra B  = C/G$ and $E \ra E/G$,
and their monodromy homomorphisms onto $G$.

\medskip

Let us consider first  the group $ \Aut_{\QQ}(S)$  of numerically trivial automorphisms.

Assume that $\psi$ is numerically trivial, hence it preserves the two fibrations  of $S$  (this  is shown more generally 
 in Lemma \ref{char} to be true for each automorphism); 
hence, again, the  function $\Phi$ is a constant $c$, and we may write:
$$\Psi(x,z) = (\psi_1 (x) , \la z + c).$$
Recalling the formulae for $g(z)=   \e z + b$ we get
 $$\ \Psi_* (g) (z) = :    g' (z) = (\e z + \la b) \, \Rightarrow \, g' \psi_1 (x) = \psi_1 (g x) , \ \  \Phi (g x)  = \e  \Phi ( x) ,$$
and  we conclude from the last equality  that $ c = \e c$.

We first look at the condition of a trivial action of $\psi$ on the first cohomology group $H^1 (S, \QQ) = H^1 (C, \QQ)^G$.

This means again that $\psi_1$ acts trivially on the  cohomology group of the quotient $C/G$, hence  by Lefschetz' fixpoint formula trivially
on $C/G$ if the genus of $ C/G$, which equals  $ q(S)$, is $ \geq 2$, and by translations on $C/G$ if $C/G$ has genus $1$.

 Observe  that $G$ is a semidirect product $ T \rtimes \mu_r$, where $T$ is a subgroup of translations 
of $E$, and $\mu_r \cong \ZZ/ r$ is the group of $r$-th roots of unity ($r=2,3,4,6$), and that the surjection $ G \ra \mu_r$
yields the character $\rho$ of the representation  of $G$ on {$H^0 (E,\Omega^1_E)$.}


\medskip

The further condition that $\psi$ is cohomologically trivial, that is, it lies in $\Aut_{\ZZ}(S)$,   
requires in particular   that its action on  $\Tors (H_1(S, \ZZ))$ is trivial.

We shall see in the next section that in this latter case we may assume that $\psi_1$ is the identity, and that
the automorphism $ \psi_2 (z) = \la z + c$ is an element of the centre of $G$.

Assume from now on that  $\psi = (\psi_1, \psi_2)$,  that  $\psi_1$ is the identity and $\psi_2$ is in the centre of $G$. Then  the action of $\psi$ on the fundamental group $\Ga$ of $S$ is such that, in view of the exact sequence
\begin{equation}\label{Gamma} 1 \ra \pi_C \times \pi_E \ra \pi_1(S) = : \Ga \ra G \ra 1,
\end{equation}
 the induced action on the subgroup $\pi_C$ and on the quotient $G$ is trivial, while the
 action on $\pi_E$ is via the quotient $ G \ra \mu_r$.
 
 In particular, if $\psi_2$ is a translation, then $\psi$ acts trivially on kernel and cokernel of \eqref{Gamma}.
 
 Passing to the first homology group $H_1(S, \ZZ)$ we have an induced exact sequence
 (see for instance  \cite[Section 6.7]{topmethods}) 
\begin{equation}\label{coinvariants} H_1(C,\ZZ)_{G} \oplus H_1(E,\ZZ)_{G} \ra H_1(S, \ZZ) \ra G^{ab} \ra 0.
\end{equation}

\begin{facts}\label{facts}
(1)
The left hand homomorphism in \eqref{coinvariants}
 is  injective when the group $G$ is cyclic (which is not necessarily the case here).
 
(2)
 In general the kernel of  the left hand homomorphism in \eqref{coinvariants}
 is generated by the commutators of the lifts to $\Ga$ of a system of generators of $G^{ab}$.
 \end{facts}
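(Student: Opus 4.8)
The plan is to recognize \eqref{coinvariants} as the five-term (Lyndon--Hochschild--Serre) exact sequence in integral homology attached to the extension \eqref{Gamma}
$$1 \ra N \ra \Ga \ra G \ra 1, \qquad N := \pi_C\times\pi_E,$$
namely
$$H_2(\Ga,\ZZ)\ra H_2(G,\ZZ)\xrightarrow{\ \tau\ }(N^{ab})_G\xrightarrow{\ \iota\ }H_1(\Ga,\ZZ)\ra H_1(G,\ZZ)\ra 0.$$
Here $N^{ab}=H_1(C,\ZZ)\oplus H_1(E,\ZZ)$, whose $G$-coinvariants are $H_1(C,\ZZ)_G\oplus H_1(E,\ZZ)_G$, while $H_1(\Ga,\ZZ)=H_1(S,\ZZ)$ and $H_1(G,\ZZ)=G^{ab}$; so the five-term sequence is exactly \eqref{coinvariants}, the left-hand map $\iota$ is the one induced by the inclusion $N\hookrightarrow\Ga$, and by exactness $\ker\iota$ equals the image of the transgression $\tau$. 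First I would record this identification (e.g.\ via Brown's \emph{Cohomology of Groups}, or the reference to \cite{topmethods} already used for \eqref{coinvariants}).

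For part (1): if $G$ is cyclic then, being finite throughout, it has $H_2(G,\ZZ)=0$ (a finite cyclic group has vanishing even homology in positive degrees). Hence $\tau=0$ and $\iota$ is injective, as claimed.

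For part (2) I would make the image of $\tau$ explicit through commutators. Since $\ker\iota=(N\cap[\Ga,\Ga])/[\Ga,N]$, I would pass to the quotient $\overline{\Ga}:=\Ga/[\Ga,N]$, which is a \emph{central} extension of $G$ by $Z:=(N^{ab})_G=N/[\Ga,N]$; in these terms $\ker\iota = Z\cap[\overline{\Ga},\overline{\Ga}]$. Because $Z$ is central, for lifts of $g,h\in G$ the commutator $[\tilde g,\tilde h]$ is independent of the chosen lifts, so the commutator map descends to $G\times G$; these commutators generate $[\overline{\Ga},\overline{\Ga}]$, and those lying in $Z$ generate $\ker\iota$. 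Choosing lifts $\tilde g_1,\dots,\tilde g_k\in\Ga$ of a generating set of $G$ (so their images generate $G^{ab}$), the classes $[\tilde g_i,\tilde g_j]\bmod[\Ga,N]$ therefore generate $\ker\iota$. When $G$ is abelian this is the standard identification $H_2(G,\ZZ)\cong\wedge^2 G$ with $g_i\wedge g_j\mapsto[\tilde g_i,\tilde g_j]$, which is precisely the situation that arises once we have reduced to $\psi_1=\id$ and $\psi_2\in Z(G)$.

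The main obstacle is the content of part (2): proving that the transgression $\tau$ is computed by the commutator pairing, equivalently that $Z\cap[\overline{\Ga},\overline{\Ga}]$ is generated by commutators of lifts and that one may restrict to lifts of generators of $G^{ab}$. The verification that $[\tilde g,\tilde h]$ is well defined modulo $[\Ga,N]$ and biadditive in the central extension is routine commutator calculus; the one point needing care is that for non-abelian $G$ a product of commutators can lie in $Z$ without its factors doing so, so the clean generation statement is sharpest (and is the form we use) exactly in the reduced setting where the relevant subgroup is abelian and $H_2(G)\cong\wedge^2 G$.
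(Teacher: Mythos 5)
You should first be aware that the paper gives no proof of these Facts: they are asserted as known, with the sequence \eqref{coinvariants} itself quoted from \cite[Section 6.7]{topmethods}, so there is no argument of the authors to compare yours against. Your identification of \eqref{coinvariants} with the five-term homology exact sequence of \eqref{Gamma} is the natural framework, and it is legitimate here because $S$ and $C\times E$ are aspherical, so that $H_1(\Ga,\ZZ)=H_1(S,\ZZ)$ and $N^{ab}=H_1(C,\ZZ)\oplus H_1(E,\ZZ)$ as $G$-modules. Your proof of part (1) is correct and complete: a finite cyclic group has vanishing $H_2$, so the transgression is zero and the left-hand map is injective.

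Part (2), however, contains a genuine gap, which your closing paragraph acknowledges but does not repair. The reduction $\ker\iota=Z\cap[\overline{\Ga},\overline{\Ga}]$, with $\overline{\Ga}=\Ga/[\Ga,N]$ a central extension of $G$ by $Z=(N^{ab})_G$, is correct, as is the observation that commutators of lifts generate $[\overline{\Ga},\overline{\Ga}]$. But the next two assertions --- that the commutators of lifts ``lying in $Z$'' generate $\ker\iota$, and ``therefore'' that commutators of lifts of a generating set suffice --- are precisely the content to be proved, and neither holds for a general finite $G$. The commuting-pairs version fails because the subgroup of $Z\cap[\overline{\Ga},\overline{\Ga}]$ generated by commutators of lifts of commuting pairs can be proper; the quotient is the Bogomolov multiplier $B_0(G)$, which is nontrivial, e.g., for certain groups of order $64$. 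The generating-set version fails more bluntly: in the central extension $1\to\ZZ/2\to SL(2,5)\to A_5\to 1$ one has $G^{ab}=0$, so the proposed set of generators of the kernel is empty, while the left-hand map is $\ZZ/2=(N^{ab})_G\to H_1(SL(2,5),\ZZ)=0$, with kernel $\ZZ/2$. (This extension is not of the geometric type \eqref{Gamma}, but your argument, like the Fact itself, is purely group-theoretic, so it must survive this test.) Finally, for non-abelian $G$ the elements $[\tilde g_i,\tilde g_j]$ map to $[g_i,g_j]\neq 1$ in $G$, hence do not lie in $N$ at all, so the statement being proved does not even typecheck. What your bilinearity argument genuinely establishes is part (2) for $G$ abelian, via $H_2(G,\ZZ)\cong\wedge^2 G$ and the commutator pairing. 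In fairness, the Fact as printed (``in general'') suffers from the same defect; the paper only ever applies it with $G$ abelian (Criterion \ref{tips} (i), (ii)) or to commutators one of whose entries lies in $Z(G)$ --- these do lie in $N$ --- as in Criterion \ref{tips} (iii), (iv) and the MAGMA computations. A correct write-up should state and prove exactly those two cases, which your central-extension framework does handle, rather than the general claim.
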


Since $\psi_1$ is assumed to be the identity, the action on $ H_1(C,\ZZ)_{G}  $ is trivial; moreover,
since $\psi_2 \in G$, the  action of $\psi_2$ on  $H_1(E,\ZZ)_{G}$ is the identity.

 \begin{remark}\label{lifts}
 Knowing that the action is trivial on the subgroup $H' < H_1(S, \ZZ)$ which is the  image of $ H_1(C,\ZZ)_{G} \oplus H_1(E,\ZZ)_{G} $ and on the cokernel $G^{ab} $ is unfortunately not sufficient to conclude that the action is trivial
on $H_1(S, \ZZ)$.

 To show triviality of the action of $\psi$ on $H_1(S, \ZZ)$ it is necessary and sufficient to show that $\psi$ fixes the lifts to
$H_1(S, \ZZ)$ of a system of generators of $G^{ab}$.

\end{remark}

Hence, in order to see whether  the action is trivial on $H_1(S, \ZZ)$ we  proceed as in the pseudo-elliptic case
 searching for  a  presentation  of $H_1(S, \ZZ)$ (see  Section \ref{2x2} for an explicit example).

 \begin{defin}
 We introduce now a new bit of notation.
 
 Write  $G = T \rtimes \mu_r$, where $T$ is  a finite group of translations of the elliptic curve $E = \CC / \Lambda$.
 
 Then there is an overlattice $\Lambda_T \supset \Lambda$ such that 
 $$ T = \Lambda_T / \Lambda.$$
 
 Moreover, the group $G$ lifts to a group of affine transformations $\sG $ of $\CC$ 
such that  $\sG / \Lambda \cong G$, hence 
$$\sG = \Lambda_T \rtimes \mu_r.$$

 We have indeed that $ \sG : = \pi_1^{orb}(E/G)$.
 \end{defin}

Recall formula \eqref{orbifold} and set:

\begin{multline*}
 \pi^{orb} : = \pi_1^{orb}(C/G) = \langle \al_1, \beta_1, \dots, \al_h, \beta_h , \ga_1, \dots, \ga_r \mid \\
\prod_i [\al_i, \beta_i]  \ga_1 \dots \ga_r=1, \ga_j ^{m_j} = 1 \, \forall j\rangle 
\end{multline*}

and  set  again  
\[
\Lambda: = \pi_E \;\;  \text{ and } \;\;   H^{orb} : = (\pi_1^{orb}(C/G))^{ab}.
\]

We have an inclusion $$ \pi_1(S) = \Ga \subset \pi_1^{orb}(C/G) \times \pi_1^{orb}(E/G) =   \pi^{orb} \times \sG, $$
 and the latter direct product  group has a surjection onto $ G \times G$,
such that  $\Ga $ is the subgroup inverse image of $\De_G < G \times G$.

The above inclusion of $\Ga$ leads then to a homomorphism 
$$H_1(S, \ZZ) \ra H^{orb} \times \sG^{ab},$$
and since we have already described the first summand, we turn to $\sG^{ab}$.

We have  the exact sequences
$$ 1 \ra \Lambda \ra \sG\ra G \ra 1 \;\; \Longrightarrow \;\;  \Lambda_G \ra  \sG^{ab} \ra G^{ab} \ra 0 ,$$
and moreover 
$$  \sG^{ab} 
\cong  (\Lambda_T)_{\mu_r}  \oplus  \mu_r  .$$

Moreover,  we have that $\mu_r$ acts on $\Lambda, \Lambda_T$ as an automorphism of
period exactly $r$. This means that they are both $\ZZ[x] / P_r(x)$-modules, where $P_r(x)$ is the $r$-th cyclotomic polynomial ($P_3(x)= x^2 + x + 1$, $P_4(x) = x^2  + 1$, $P_6(x) = x^2 - x + 1$);  and 
since $ \ZZ[x] / P_r(x)$ is a P.I.D. they are  free modules of rank 1  for $ r \geq 3$.

Whence  easy calculations (essentially the same as the ones to be performed in Lemma \ref{G}) show that 
\[
\Lambda_G \cong 
\begin{cases}
( \Lambda_T)_{\mu_2} \cong (\ZZ / 2)^2& \text{if $r=2$,}\\
( \Lambda_T)_{\mu_3} \cong \ZZ /3 & \text{if $r=3$,}\\
( \Lambda_T)_{\mu_r}= \ZZ / 2  &\text{if $ r= 4$.}\\
( \Lambda_T)_{\mu_r}= 0  &\text{if $ r= 6$.}\\

\end{cases}
\]

Hence 

\[
\sG^{ab} \cong 
\begin{cases}
(\ZZ/2)^2 \times \mu_2 & \text{for $r=2$,}\\
(\ZZ/3) \times \mu_3 &\text{for $r=3$,} \\
(\ZZ/2) \times  \mu_4 & \text{for $r=4$,}\\
 \mu_6& \text{for $r=6$.}
\end{cases}
\]

\medskip

For the  classification to come, we relate explicitly the group $G$ with its centre $Z(G)$.

\begin{lemma}\label{G}
Let $$G = T \rtimes \mu_r,$$
be a group of (affine) automorphisms of an elliptic   curve $E = \CC / \Lambda$, where the 2-generated group 
 $ T = \Lambda_T / \Lambda  $ is a group of translations  
 and $ r \in \{ 2,3,4,6\}$.
 
 Let $\e$ be a generator of $\mu_r$, and write 
 $$T \cong \ZZ/m_1 \oplus \ZZ/m_2$$
 with $m_2 \mid m_1$, more precisely 
 let  $\eta_1 , \eta_2$ be a basis  of  $\Lambda_T$,  
such that $m_1 \eta_1, m_2 \eta_2$ is a basis of $\Lambda$.

$m_1, m_2$ can be chosen  arbitrarily for $r=2$, 
while for $ r \geq 3$, writing  
$\Lambda_T = \ZZ[x] / P_r(x)$, then  there is an element $\eta$ such that $\Lambda = m_2 \eta  \Lambda_T $.

Write $ \eta=  a' + b' x \; (a', b'\in\ZZ)$. 
Then
  $ m : = m_1/ m_2 $ is unbounded, since it equals, in the  respective cases
 $r=3,4,6$,
$$ (a')^2+ (b')^2  - a'b', \ (a')^2 + (b')^2 , \ (a')^2 + (b')^2  + a'b'.$$

 We have the following cases for $Z(G)$, setting $A= \mu_r$:
\begin{enumerate}
\item If $A= \mu_2 $ then $Z(G)$ is isomorphic to a subgroup of  $E[2]\times \mu_2 $, where $E[2]$ denotes the subgroup of 2-torsion elements of  $E$.
\item If $A= \mu_3 $, then $Z(G)$ is isomorphic to a subgroup of 
$$(\ZZ /3)  \frac{ 1 - \om}{3} \times \mu_3.$$
\item If $A= \mu_4 $, then $Z(G)$ is a  subgroup of  $$ (\ZZ /2 )\frac{1}{2} (1 + i) \times \mu_4.$$
\item If $A= \mu_6 $, then $Z(G)$ is a subgroup of $A= \mu_6 $.
\end{enumerate}

Finally, either  

(i) $ Z (G)$ is contained in $T$,
the non-trivial possibilities being $(\ZZ /2)^2, \ZZ /3,  \ZZ /2$, or 

(ii) $G = Z(G)$, hence $G$ is abelian and a subgroup of
the  above groups, or

(iii)  we have the following group, that we shall call {\bf sporadic}:
\begin{equation}\label{sporadic}
 G = E[2] \rtimes \mu_4,  \ {\rm here} \ Z (G) =  (\ZZ /2 )\frac{1}{2} (1 + i) \times \mu_2.\end{equation}

\end{lemma}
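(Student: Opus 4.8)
The plan is to compute $Z(G)$ directly from the semidirect-product law and then obtain the four containments and the trichotomy by a single case analysis on the kernel of the action of $\mu_r$ on $T$.

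\textbf{Center formula.} Write elements of $G$ as pairs $(t,\zeta)$ with $t\in T$ and $\zeta\in\mu_r$. Conjugating a translation $\tau_w$ by the rotation $z\mapsto\zeta z$ produces $\tau_{\zeta w}$, so the multiplication is $(t_1,\zeta_1)(t_2,\zeta_2)=(t_1+\zeta_1 t_2,\zeta_1\zeta_2)$. Requiring $(t,\zeta)$ to commute with every $(s,1)$ forces $(\zeta-1)s=0$ for all $s\in T$, i.e.\ $\zeta$ acts trivially on $T$; requiring it to commute with every $(0,\xi)$ forces $(\xi-1)t=0$ for all $\xi$, i.e.\ $t\in T^{\mu_r}$. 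These two conditions are independent, and on the resulting set the product is componentwise, so
\[
Z(G)=T^{\mu_r}\times K,\qquad K:=\ker\!\big(\mu_r\to\Aut(T)\big).
\]

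\textbf{The norm formula for $m$.} For $r\geq 3$ the ring $\ZZ[\epsilon]=\ZZ[x]/P_r(x)$ is the ring of integers of the imaginary quadratic field $\QQ(\epsilon)$, hence a P.I.D.; thus $\Lambda_T$ is free of rank one and $\Lambda$, being $\mu_r$-stable, is an ideal, so $\Lambda=m_2\eta\Lambda_T$ for some $\eta=a'+b'x$. Comparing indices gives $[\Lambda_T:\Lambda]=N(m_2\eta)=m_2^2\,N(\eta)$, while $|T|=m_1m_2$, whence $m=m_1/m_2=N(\eta)$. Expanding $N(a'+b'x)$ using $P_r(x)=0$ yields the three displayed positive-definite quadratic forms, which are manifestly unbounded.

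\textbf{The four containments.} The fixed locus $E^{\mu_r}=\ker(\epsilon-1\colon E\to E)$ has order $N(\epsilon-1)=|\epsilon-1|^2$, equal to $4,3,2,1$ for $r=2,3,4,6$; a direct check identifies generators as $E[2]$, $\tfrac{1-\omega}{3}$, $\tfrac{1}{2}(1+i)$, and $0$. Since $T^{\mu_r}\subseteq E^{\mu_r}$ and $K\subseteq\mu_r$, the formula $Z(G)=T^{\mu_r}\times K$ immediately gives the containments (1)--(4).

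\textbf{Trichotomy and main obstacle.} Split on $K$. If $K=\{1\}$ then $Z(G)=T^{\mu_r}\subseteq T$, which equals $(\ZZ/2)^2,\ \ZZ/3,\ \ZZ/2$ for $r=2,3,4$ (and is trivial for $r=6$); this is case (i). If $K=\mu_r$ then $\mu_r$ fixes $T$ pointwise, so $T\subseteq E^{\mu_r}$, $G$ is abelian, and $G=Z(G)$; this is case (ii). The only remaining possibility is an intermediate kernel, which can occur only for $r=4$ or $r=6$. For $r=4$, $K=\mu_2$ says that $-1$ fixes $T$ (so $T\subseteq E[2]$) while $i$ does not; $\mu_4$-invariance of $T\subseteq E[2]$ together with the explicit action of $i$ on $E[2]$ then forces $T=E[2]$, producing the sporadic group $E[2]\rtimes\mu_4$ with $Z(G)=(\ZZ/2)\tfrac{1}{2}(1+i)\times\mu_2$, which is case (iii). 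For $r=6$ an intermediate kernel forces $T^{\mu_6}=0$, so the center lies in $\mu_6$ with no translation part and is already subsumed by containment (4); in particular the three listed alternatives exhaust all configurations carrying a nontrivial central \emph{translation}. The main obstacle is exactly this intermediate-kernel analysis: one must combine $\mu_r$-invariance of $T$ with the explicit action of $\epsilon$ on the $2$- and $3$-torsion to pin down $T$, and to confirm that the sporadic case is the unique situation in which a nontrivial central translation coexists with a central rotation.
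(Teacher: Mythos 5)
Your proof follows, in substance, exactly the paper's route: your identity $Z(G)=T^{\mu_r}\times K$ is the paper's characterization of central elements ($y\in\Ker(1-\e)$ and $T\subset\Ker(c-1)$), the containments (1)--(4) come from computing $E^{\mu_r}=\Ker(\e-1)$ of order $4,3,2,1$, and $m=N(\eta)$ is obtained by a norm/index count where the paper instead writes explicit bases and takes a determinant (equivalent computations; note that both you and the paper take for granted that the content of a generator of the ideal $\Lambda$ equals the invariant factor $m_2$). Your analysis of the intermediate kernel for $r=4$ --- $T\subseteq E[2]$ is $\mu_4$-invariant, and $i$ acting nontrivially on it forces $T=E[2]$, hence the sporadic group --- is correct and agrees with the paper.

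The genuine problem is your $r=6$ intermediate-kernel case, and it is not removable. What you prove there (``the center lies in $\mu_6$'', ``the three alternatives exhaust all configurations carrying a nontrivial central \emph{translation}'') is strictly weaker than the stated trichotomy, and being ``subsumed by containment (4)'' is beside the point: (4) is a containment, whereas (i)--(iii) claims an exhaustive case division of \emph{all} such $G$. Moreover the case you set aside is nonempty: take $\Lambda=\ZZ[\zeta_6]$, $\Lambda_T=\tfrac12\Lambda$, so $T=E[2]$ and $G=E[2]\rtimes\mu_6$. Then $-1$ acts trivially on $T$, so $\mu_2\subseteq K$, while $\zeta_6$ acts nontrivially on $T$ because $\Ker(\zeta_6-1)=E^{\mu_6}=0$; hence $K=\mu_2$ and $Z(G)=T^{\mu_6}\times K=\{0\}\times\mu_2\cong\ZZ/2$. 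This center is not contained in $T$, $G$ is nonabelian of order $24$ so $G\neq Z(G)$, and $G$ is not the sporadic $\mu_4$-group: none of (i), (ii), (iii) holds. (Likewise $T=\Ker(\om-1)\cong\ZZ/3$ gives $K=Z(G)\cong\mu_3$.) So the trichotomy as literally stated is false, and your weakened conclusion is the most that can be proved. Importantly, your bookkeeping has exposed that the paper's own proof contains the same lacuna: it asserts that ``the only alternative where $c$ is never a generator of $\mu_r$ and $Z(G)\not\subset T$ is that $r=4$ while $c$ is equal $\pm1$'', overlooking the elements of order $2$ and $3$ in $\mu_6$. The lemma needs a fourth alternative (or an extra hypothesis excluding these $r=6$ groups), and the defect propagates to the places where the trichotomy is invoked, e.g.\ steps (2) and (4) of the proof of Theorem \ref{lem: act trivial 1}, which claim that a nonabelian, nonsporadic $G$ has $Z(G)\subset T$.
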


With a view towards Theorem \ref{thm}, 
we point out that $|Z(G)|>4$ only occurs, in the maximal cases from (1) -- (4), when $G$ is abelian.

\begin{proof}
Let us now view  $\Lambda$ as a $\mu_r$-invariant $\ZZ$-submodule of $ \Lambda_T $.

For $r=2$, $\mu_r$ acts as multiplication by $-1$, hence we can choose $\Lambda$
as an arbitrary submodule of  $ \Lambda_T $, and  the  assertion $T \cong \ZZ/m_1 \oplus \ZZ/m_2$
 with $m_2 \mid m_1$ is a special case of the Theorem of  the Frobenius normal form for finite abelian groups
\footnote{also known as  the invariant factor decomposition.}
 (Theorem 3.9 of \cite{jacobson}).

For $ r \geq 3$, then  $\Lambda \subset \Lambda_T =   \ZZ[x] / P_r(x)$ is a principal ideal, associated to an element
$a + bx$. 

 Let $m_2$ be the divisibility index of $a + bx$ and write    
$$(a + b x) : = m_2 \eta=  m_2 (a' + b' x),$$
so that $\Lambda = m_2 \Lambda'$, where $\Lambda'$ is the principal ideal generated
by $\eta = (a' + b' x)$ (we set then $T ' = \Lambda_T  / \Lambda'$, 
so that  $ T' = T  / (\ZZ/m_2)^2$).

Clearly, a second $\ZZ$-generator of $\Lambda$  is  $ x  (m_2 \eta)$.

We have, according to $r=3,4,6$, 
$$   x (m_2 \eta) =  -b + (a-b) x, \  -b + a x, \  -b + (a+b) x.$$

Clearly $m_2 = GCD(a,b)$, while then the matrix determinant equals $m_1 m_2$, hence our assertion
concerning $m = m_1/ m_2$.

More precisely, we have that $ (b' + a'x) \eta = m x$, and then $\Lambda'$ is generated by 
$$\eta, x \eta, mx = (b' + a'x) \eta,$$
hence, if $r,s$ are integers such that $ ra' - sb'=1$,  also by  $(r  + s x) \eta, m x$ (since $(b' + a'x), (r  + s x)$
are a basis for $\Lambda_T$).

Hence, we may explicitly set 
$$ \eta_1:  = x, \eta_2 : = (r  + s x) \eta =  1 + x (s a' + b' r + s b' \de), \de= -1, 0, 1.$$
Then $$\eta_1 = x, \eta_2 = : 1 + n x$$ are a basis of $\Lambda_T$, and $ m_1 \eta_1= m m_2 \eta_1, m_2 \eta_2$
are a basis of $\Lambda$.

\medskip

We determine now  the centre   $Z(G)$ of $G=T\rtimes \mu_r$: it consists of the  elements $g = ( y, c)$ with  $ y \in \Ker (1-\e)$,
$T \subset \Ker (c-1) $. 

For $r=2$, this means that $2 y = 0$, and $c=1$ unless $m_1=2$.
This verifies case \textit{(1)}.
More precisely, it leads to the abelian case
$$
Z(G) = G = T \times \mu_2 \;\;\; \text{ if } \;\; T \subset E[2]
$$
and to $Z(G) = T \cap E[2]\subseteq (\ZZ/2)^2\subsetneq G$ in case $T\not\subset E[2]$
(where of course $G$ is not abelian).

For $r \geq 3$,  assume first that there is an element $(y,c)\in Z(G)$ such that  $c$ is a generator of $\mu_r$:
then $ (1-x) T = 0 \in T $, that is, 
$$ (1-x) \Lambda_T \subset m_2 \eta_2 \Lambda_T .$$

This immediately implies first of all that $m_2=1$, and moreover, since the ideal $I$ generated by $1-x$ has
respective index $3,2,1$  inside $\Lambda_T $, it follows that either  $ T = 0$, or  
$\eta_2 $ and $1-x$ generate the same ideal, hence we may in particular assume that $\eta_2 = 1-x$; it follows in particular that $G$ is abelian.

 The only alternative where $c$ is never a generator of $\mu_r$ and  $Z(G)\not\subset T$ is that $r=4$ while $c$ is equal  $\pm 1$.
This implies that 
$ 2 T = 0$, and then the condition $y \in \Ker (1-i)$ says that $y$ is a multiple of $ \frac{1}{2} (1+i)$.
 Hence in this case $G \subset E[2] \rtimes \mu_4$ 
 which is 
 non-abelian if and only if the inclusion is an equality, and then $ Z(G) =  (\ZZ /2 )\frac{1}{2} (1 + i) \times \mu_2$ as stated.

To conclude, 
 if $Z(G)\not\subset T$, then  we are in cases  (2) -- (4) with $G = Z(G)$, or in the sporadic case (iii) \eqref{sporadic}.

\medskip

Finally, in order to deal with the 
 case
 where $Z(G)$ is contained in the group of translations\footnote{For instance, $r=2$ and $T= (\ZZ/n)^2, n\geq 3$.},
 the condition that $ y = [v ] \in \Ker (1-\e)$
says that 
$$ Z(G) \cap T= \{ v | (1-x) v \in \Lambda \}/ \Lambda.$$ 

For simplicity of notation let us use the isomorphism  $\Lambda \cong  \ZZ[x] / P_r(x)$
 whose inverse is multiplication by $\eta$.

Then $$v \in (1-x)^{-1} \Lambda \in \Lambda \otimes \QQ,$$ 
and since 
$$ (1-x)^{-1} \; = \;\; {\rm respectively} \;\; \frac{1}{3} ( 2 + x ), \;\; \frac{1}{2} ( 1 + x ), \;\; x,$$
we see that in all three cases 

 $$ Z(G) \cap T  \;\; {\rm is \ a \ respective \ subgroup \ of } \;\;  (\ZZ/3 )\frac{1}{3} ( 2 + x ), \;\; (\ZZ/2) \frac{1}{2} ( 1 + x ),\;\;  0.$$

 Together with the analysis for $r=2$, this verifies the final statement for the case where $Z(G)\subset T$.
 \end{proof}

\begin{corollary}
Assume that $G$ is not abelian, $r \geq 3$: then   the centre $Z(G)$ consists of a nontrivial subgroup of $T$
if and only if
\begin{itemize}
\item
  $r=3$, and $ 3 | (a+b) = m_2 (a' + b')$,  (then $Z(G) = \ZZ/3$) 
 \item
  $r=4$, and $  2 | m_1 $  (then $Z(G) = \ZZ/2$).
 \end{itemize}

\end{corollary}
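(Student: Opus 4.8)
The plan is to read the corollary directly off the lattice computation at the end of the proof of Lemma \ref{G}, organizing the argument by the order $r$ and translating the resulting containment of lattices into the stated divisibilities. Since $G$ is non-abelian and $r\geq 3$, the trichotomy concluding Lemma \ref{G} leaves only two possibilities: either $Z(G)\subseteq T$ (case (i)), or $G$ is the sporadic group $E[2]\rtimes\mu_4$ of \eqref{sporadic} (case (iii)); the abelian alternative (ii) is excluded by hypothesis. I would first work under the assumption $Z(G)\subseteq T$, so that $Z(G)=Z(G)\cap T$, and invoke the formula established there,
\[
Z(G)\cap T=\{\, v\in\Lambda_T \mid (1-x)v\in\Lambda \,\}/\Lambda=\bigl(\Lambda_T\cap (1-x)^{-1}\Lambda\bigr)/\Lambda ,
\]
which exhibits $Z(G)\cap T$ as a subgroup of the finite cyclic group $(1-x)^{-1}\Lambda/\Lambda$. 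Because $\Lambda$ is a $\Lambda_T$-ideal we have $\Lambda\subseteq (1-x)^{-1}\Lambda$, so the order of this cyclic group is the index $[\Lambda:(1-x)\Lambda]$, equal to the norm of $1-\zeta_r$, namely $3,2,1$ for $r=3,4,6$ respectively. In particular, for $r=6$ the group $Z(G)\cap T$ is trivial, so $r=6$ never contributes a nontrivial centre inside $T$, matching the statement.

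It then remains to decide, for $r=3$ and $r=4$, when $Z(G)\cap T$ fills out the whole cyclic group, i.e. when $(1-x)^{-1}\Lambda\subseteq\Lambda_T$. Writing $\Lambda=m_2(a'+b'x)\Lambda_T$ as in Lemma \ref{G} and using the explicit inverses $(1-x)^{-1}=\tfrac13(2+x)$ for $r=3$ and $(1-x)^{-1}=\tfrac12(1+x)$ for $r=4$, this containment amounts to requiring that the generator $\tfrac{m_2}{3}(2+x)(a'+b'x)$, respectively $\tfrac{m_2}{2}(1+x)(a'+b'x)$, lie in $\Lambda_T=\ZZ[x]/P_r(x)$. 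Expanding modulo $P_r(x)$, the two coefficients to control are $2a'-b'$ and $a'+b'$ when $r=3$, and $a'-b'$ and $a'+b'$ when $r=4$. Using $2a'-b'\equiv -(a'+b')\pmod 3$ and $a'-b'\equiv a'+b'\pmod 2$, each pair of conditions collapses to one divisibility: $3\mid m_2(a'+b')=a+b$ for $r=3$, and $2\mid m_2(a'+b')$ for $r=4$. Finally $m_2(a'+b')\equiv m_2\bigl((a')^2+(b')^2\bigr)=m_1\pmod 2$, so the $r=4$ condition is exactly $2\mid m_1$, and one reads off $Z(G)=\ZZ/3$ and $Z(G)=\ZZ/2$ in the two cases.

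The point demanding the most care, and the one I expect to be the only non-formal step, is the sporadic group. There $r=4$, $T=E[2]$ and $m_1=m_2=2$, so the divisibility $2\mid m_1$ holds and indeed $Z(G)\cap T=\ZZ/2$; but the involution $-1\in\mu_4$ acts trivially on $E[2]$ and hence is central, so $Z(G)=(\ZZ/2)^2\not\subseteq T$. Thus the sporadic case is precisely where the bare condition $2\mid m_1$ fails to capture that $Z(G)$ leaves $T$, and it must be set aside for the clean conclusion ``$Z(G)=\ZZ/2$ contained in $T$'' to hold — this is exactly the exception already isolated as case (iii) of Lemma \ref{G}. No analogous subtlety arises for $r=3$, since the proof of Lemma \ref{G} shows that every non-abelian $G$ with $r=3$ automatically has $Z(G)\subseteq T$. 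Checking the coincidences $2a'-b'\equiv-(a'+b')\pmod 3$ and $m_1\equiv m_2(a'+b')\pmod 2$, together with disentangling the sporadic exception, is the crux of the argument; everything else is a transcription of Lemma \ref{G}.
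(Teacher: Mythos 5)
Your argument is correct and is essentially the paper's own proof: after reducing to $r=3,4$ via Lemma \ref{G}, both you and the paper compute when the generator $(1-x)^{-1}m_2\eta$ lies in $\Lambda_T$, using $(1-x)^{-1}=\tfrac{1}{3}(2+x)$, resp.\ $\tfrac{1}{2}(1+x)$, expand to the coefficient pairs $(2a'-b',\,a'+b')$, resp.\ $(a'-b',\,a'+b')$, and collapse each pair via the congruences $2a'-b'\equiv-(a'+b')\pmod 3$ and $m_2(a'+b')\equiv m_2\bigl((a')^2+(b')^2\bigr)=m_1\pmod 2$ to the stated divisibilities $3\mid a+b$ and $2\mid m_1$. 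The one point where you go beyond the paper is the sporadic group $E[2]\rtimes\mu_4$: the paper's proof of the corollary passes over it in silence, whereas you correctly observe that there $2\mid m_1$ holds yet $Z(G)=(\ZZ/2)\tfrac{1}{2}(1+i)\times\mu_2\not\subset T$, so the biconditional is only literally true if the corollary is read as excluding case (iii) of Lemma \ref{G} (equivalently, as a statement about $Z(G)\cap T$) --- a genuine imprecision in the statement that your treatment makes explicit rather than a deviation from the paper's method.
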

\begin{proof}
We have seen in the previous Lemma \ref{G} that it must be $r=3,4$.

Moreover, for $r=3$, the centre $Z(G)$ consists of multiples of $\frac{1}{3} ( 2 + x) \eta$: hence it is nontrivial
if and only if $\frac{1}{3} ( 2 + x) \eta \in \Lambda_T$, equivalently if and only if
$$ \frac{1}{3} m_2 ( 2 + x)(a' + b' x) = \frac{1}{3} m_2 ( (2a' - b') + x (a' + b')$$
has integral coefficients.

This however amounts to $ 3 | m_2$, or   $ 3 | (a' + b')$, the latter condition
 meaning that $\eta \equiv \pm 1 (1-x) \ \mod \ 3 \Lambda_T$.

For $r=4$, the condition of nontriviality of $Z(G)$ amounts instead to
$$ \frac{1}{2} ( 1 + x) \eta \in \Lambda_T \Leftrightarrow \frac{1}{2} m_2 ( (a' - b') + (a' + b') x.$$

This holds true if either $2 | m_2$, or $a' \equiv b' \ (mod \ 2)$, which is equivalent to
$2 | m$. The conclusion is that this holds if and only if $ 2 | m_1$.
\end{proof}

\subsection{Normal forms for the monodromies of $ C \ra B$ in the case $G$ abelian or sporadic}

 Lemma \ref{G}  classifies the possible groups $Z(G)$ appearing, hence gives a non sharp upper bound 
$|Aut_{\ZZ}(S) | \leq 9$ for the group of cohomologically trivial automorphisms.

In order to obtain the better bound $|Aut_{\ZZ}(S)| \leq 4$ of Theorem \ref{lem: act trivial 1} 
we need to exclude several groups $G$, and to consider all the corresponding 
surfaces $S$, which are determined by the $G$-Galois covering $ C \ra C/G= B$.

In turn these, by the Riemann existence theorem, are determined by their branch sets and by the 
isomorphism classes of their monodromy
homomorphisms.

For this reason we have to describe the  normal forms for these monodromies.

These are a finite number in the case where we want to exclude the possibility $|Aut_{\ZZ}(S) |\geq 5$, but, especially 
in the case where the genus of $B$ is at least $2$, the classes of the possible monodromies are an infinite set.

Yet we found it worthwhile to  collect a significant set of low genus (for $C$)  cases, in order to establish evidence 
for the difficulty
of finding an example with $|Aut_{\ZZ}(S) |=4$.

 We assume first that the group $G$
is as in Lemma \ref{G}, and abelian (i.e., with $G = Z(G)$, as detailed in (1) -- (4) of  Lemma \ref{G}). 

We moreover assume that the base curve $B = C/G$ has genus $h \geq 1$, and we want to find a normal form for the monodromy, that is, 
for the surjection $ Mon : \pi^{orb} \twoheadrightarrow G$, where
\begin{multline*}
 \pi^{orb} : = \langle \al_1, \beta_1, \dots, \al_h, \beta_h , \ga_1, \dots, \ga_k \mid  
\prod_i [\al_i, \beta_i]  \ga_1 \dots \ga_k=1, \; \ga_j ^{m_j} = 1 \, \forall j\rangle,
\end{multline*}
 and where we assume that $t_j : = Mon (
\ga_j)$ has order precisely $m_j$.

\begin{lemma}\label{Translations}
The case $G= \mu_r \; (r=2,3,4,6)$ cannot occur for $h=1$, and for $h \geq 2$ it leads to a trivial group $\Aut_{\ZZ}(S)$.

{For $G$ abelian},  since $C$ has genus $\geq 2$,
 the number  of branch points satisfies $k\geq 2$ for $h=1$.

\end{lemma}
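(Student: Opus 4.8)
The plan is to analyze the two assertions of Lemma~\ref{Translations} separately, exploiting the structure of $\sG^{ab}$ and $H^{orb}$ computed just above the statement. Throughout I use the fact (Remark~\ref{lifts}) that cohomological triviality of $\psi$ (with $\psi_1 = \id$ and $\psi_2$ a central translation) reduces to checking that $\psi$ fixes the lifts to $H_1(S,\ZZ)$ of a system of generators of $G^{ab}$, together with the embedding $H_1(S,\ZZ) \hookrightarrow H^{orb} \times \sG^{ab}$ coming from $\Ga \subset \pi^{orb} \times \sG$.

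For the first assertion, suppose $G = \mu_r$, so that $T = 0$ and hence $\Lambda_T = \Lambda$. First I would rule out $h=1$: since $G$ acts freely on $C\times E$ off a measure-zero set and $C$ must have genus $\geq 2$, a purely cyclic $G = \mu_r$ over a base of genus $1$ cannot produce a faithful action on a curve $C$ with $\mathrm{genus}(C)\geq 2$ subject to the orbifold Riemann--Hurwitz constraint while keeping the diagonal action free; more concretely, I expect the free-quotient condition on $C\times E$ forces a contradiction with $\chi(\sO_S)=0$ and the required surjectivity $Mon : \pi^{orb}\twoheadrightarrow \mu_r$ when $h=1$. For $h \geq 2$, I would argue that $\Aut_\ZZ(S)$ is trivial: the candidate central automorphisms $\psi_2$ are translations by elements of $T = 0$, so there is nothing to translate by; alternatively, reading off from the table $\sG^{ab}\cong \mu_r$ (the $T=0$ case of the four displayed formulas, obtained by setting $\Lambda_T = \Lambda$ so that $(\Lambda_T)_{\mu_r}$ collapses), one sees that the image of $H_1(E,\ZZ)_G$ in $H_1(S,\ZZ)$ contributes only the factor $G^{ab}=\mu_r$ on which any $\psi$ fixing generators acts trivially only if $\psi$ itself is trivial. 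I would phrase this as: a nontrivial cohomologically trivial $\psi$ must have $\psi_2$ a nontrivial central translation, which requires $Z(G)\cap T \neq 0$, impossible when $T=0$.

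For the second assertion, assume $G$ abelian and $h=1$, and suppose for contradiction that there is only one branch point, $k=1$ (the case $k=0$ is a genuine unramified quasi-bundle, which would force $\mathrm{genus}(C) = \mathrm{genus}(B) = 1$ by Riemann--Hurwitz, contradicting $\mathrm{genus}(C)\geq 2$). With $k=1$ the orbifold relation $\prod_i[\al_i,\beta_i]\,\ga_1 = 1$ forces $\ga_1 = \prod_i[\al_i,\beta_i]$, hence $Mon(\ga_1)$ lies in the commutator subgroup of the image; since $G$ is abelian this means $Mon(\ga_1)=1$, so $\ga_1$ has trivial monodromy and thus order $m_1 = 1$, i.e.\ there is effectively no branch point. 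That returns us to the unramified case and again contradicts $\mathrm{genus}(C)\geq 2$ via Riemann--Hurwitz (a genus-$1$ base with an abelian unramified cover yields a curve of genus $1$). Hence $k \geq 2$.

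The main obstacle will be the first assertion's claim that $h=1$ \emph{cannot occur} for $G=\mu_r$: this is not purely formal, since one must genuinely combine the Riemann existence/Riemann--Hurwitz data for the $\mu_r$-cover $C\to B$ with the freeness of the diagonal action on $C\times E$. I expect the cleanest route is to observe that for $G=\mu_r$ acting on $E$, the action on $E$ is by the order-$r$ automorphism $\e$ (not a translation), so freeness of $\De_G$ on $C\times E$ forces every nontrivial $g\in G$ to act without common fixed points on $C\times E$; combined with $h=1$ and the single commutator relation, the surjectivity of the monodromy becomes incompatible with the fixed-point analysis of $\e$ on $E$ (which always has fixed points for $r\geq 2$). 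Writing this incompatibility carefully---essentially an orbifold Euler-characteristic count showing no valid ramification datum exists over a genus-$1$ base---is the step requiring the most care; the rest reduces to the bookkeeping already prepared in the displayed computations of $H^{orb}$ and $\sG^{ab}$.
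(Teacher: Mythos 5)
Your third assertion ($G$ abelian, $h=1$ implies $k\geq 2$) is proved correctly, and by the same mechanism the paper uses: for abelian $G$ a single branch point must carry trivial local monodromy because of the relation $\prod_i[\al_i,\be_i]\ga_1=1$, and an unramified cover of a genus-$1$ curve has genus $1$. Your first assertion also contains the right idea, namely that freeness of $\De_G$ on $C\times E$, together with the fact that every nontrivial element of $\mu_r$ has a fixed point on $E$, forces $G$ to act freely on $C$, so that $C\to B$ is unramified and Riemann--Hurwitz over a genus-$1$ base gives $\mathrm{genus}(C)=1$, a contradiction. But you leave this as an expectation ("I expect the cleanest route\dots") and flag it as the hard step, when in fact it is immediate once the cover is known to be unramified; no orbifold Euler-characteristic count is needed. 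This is exactly how the paper argues: it had already recorded (from freeness of $\De_G$) that all local monodromies of $C\to B$ act on $E$ as translations, hence are trivial when $T=0$.

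The genuine gap is in the second assertion ($h\geq 2$ implies $\Aut_\ZZ(S)$ trivial). You assert that "the candidate central automorphisms $\psi_2$ are translations by elements of $T=0$" and that "a nontrivial cohomologically trivial $\psi$ must have $\psi_2$ a nontrivial central translation." That is precisely the statement that needs to be proved, and nothing you have set up yields it: for $G=\mu_r$ (abelian) one has $Z(G)=G=\mu_r$, so the a priori candidates coming from the inclusion $\Aut_\ZZ(S)\subset Z(G)$ are exactly the non-translations $z\mapsto \e^j z$. Moreover, no argument on $H_1(S,\ZZ)$ alone (your alternative route via $\sG^{ab}$) can close this gap: conjugation by a central element automatically acts trivially on the quotient $G^{ab}$ of $H_1(S,\ZZ)$, and non-translations can genuinely act trivially on all of $H_1(S,\ZZ)$ --- this happens in the paper's main example (Section \ref{main-ex}, $G=\ZZ/3\times\mu_3$, $h=2$), where the whole group acts trivially on $H_1(S,\ZZ)\cong\ZZ^4$ and yet $\Aut_\ZZ(S)=T=\ZZ/3$; the same phenomenon occurs for $G=\mu_r$ with suitable étale monodromy. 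What actually excludes non-translations for $h\geq 2$ is an argument on $H^2$: the paper defers this to the proof of Theorem \ref{lem: act trivial 1}, part (4), where the K\"unneth formula and Schur's lemma show that if some $\psi_2$ is not a translation and $\psi$ is numerically trivial, then $\left(H^1(C,\CC)\otimes H^1(E,\CC)\right)^G=0$, hence $b_2(S)=2$ and $e(S)=4-4h$; since $e(S)=0$, this forces $h=1$. Without this (or an equivalent) $H^2$-argument, your proof of the $h\geq 2$ statement does not go through.
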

\begin{proof}

The first and third assertions are  clear, because if $T=0$, then the local monodromies must be trivial, hence 
there are no branch points for $ C \ra C/G=B$. 
But since we  require that $C$ has genus $\geq 2$ and {$G$ is abelian}, if $B$ has genus $h=1$ the number of branch points for $ C \ra C/G=B$ is at least 2.

For $h \geq 2$, we shall see in the proof of Theorem \ref{lem: act trivial 1} that $\Aut_{\ZZ}(S) \subset T$, hence the second assertion.
\end{proof}

In order to find a normal form for the possible monodromies, we can use
in general  the following transformations:

\begin{enumerate}
\item[a)] replace a generator $\al_i$ by $\al_i' : = \al_i \ga_j^{\pm1}$ (or similarly for $\be_i$),
\item[b)] replace $\al_i$ by $\al_i'': = \al_i \be_i^{\pm1}$ (or analogously $\be_i'' : = \be_i \al_i^{\pm1}$),
\item[c)] change the indices $i=1, \dots, h$ via any permutation.
\end{enumerate}

These operations (the second is the effect of a Dehn twist along a path $\be_i, \al_i$ or its inverse)
provide us with a different symplectic basis, and the complement of the corresponding loops is again a disk, whence we 
can complete to a new basis of the orbifold fundamental group by adding 
$\ga_1', \dots, \ga_k'$ where each $\ga_j'$ is a conjugate of $\ga_j$.

In particular, if $G$ is abelian, the local monodromies do not change.

Finally, there is a fourth operation (see  \cite[2.3 on page 250]{zimmermann}, where however the roles of $a_i, b_i$ are reversed) which, in the case where
$G$ is abelian, exchanges the monodromies $a_i : = Mon (\al_i), b_i : = Mon(\be_i)$ 
via
\begin{enumerate}
\item[d)]
$a_i \mapsto a_i, \ b_i \mapsto b_i b_{i+1} a_{i+1} , \ b_{i+1} \mapsto b_{i+1}  a_i^{-1} , \ a_{i+1} \mapsto a_{i+1}  a_i^{-1}.$
\end{enumerate}
 Note that applying b) successively allows to replace $\alpha_i$ by $\beta_i^{-1}$
(and $\beta_i$ by $\beta_i\alpha_i\beta_i^{-1}$, for instance).

\begin{lemma}\label{Mon}
 Suppose that we are in the case where $G = T \times \mu_r$ is abelian, $r < 6$,  $T \neq 0$,  and  all the local monodromies $Mon (\ga_j )=:  t_j $ are  translations of order $m_j \geq 2$.
 
 Then, after  a change of system of geometric generators  for $\pi^{orb}$,  we may assume that  $Mon (\al_1)$ is a generator of $\mu_r$ and that $Mon (\be_j) , Mon (\al_i) \in T$, $\forall j,$ and $  \forall i \geq 2$.

 If the local monodromies $t_j$ generate $T$, we can moreover assume that, $\forall j,$ and $ \forall i \geq 2$,  
 $Mon (\be_j) , Mon (\al_i) $ are trivial.

Every monodromy is then equivalent to one of
 the following normal forms for $h=1,2$.

 \begin{enumerate}
 \item[Case (I):]  
if $h=1$ or $h=2$ and $Mon (\al_2)$ and $Mon (\be_2)$ are trivial, then we have the subcases:
 \begin{enumerate}
 \item[(I-1)] $Mon (\be_1)$ is trivial,    $k \geq 2$ and $t_1, t_2$ generate $T$, or
 
 \item[(I-2)]   $k=0$, $h = 2$, and $Mon (\be_1)$ generates $T$, or 
 
 \item[(I-3)] $r=2$, $T =(\ZZ/2 )e_1 \oplus (\ZZ/2) e_2$, $k \geq 2$ and we have 
 \begin{equation}\label{st2}
 Mon (\al_1)= \e=  -1 \in \mu_2, \ Mon (\be_1)=e_1, \ t_j = e_2 \ \forall j.
 \end{equation}
 \end{enumerate}
 \item[Cases (II-III):]
    If   $h = 2$,  and we are not in case (I), then either $k=0$, or  $k\geq2$, $r=2$, $T =(\ZZ/2 )e_1 \oplus (\ZZ/2) e_2$.
     We have $ Mon (\al_1)=\e  \in \mu_2$  and either
     \begin{enumerate}
    \item[(II)]
    $k=0$, $r=2$, $T =(\ZZ/2 )e_1 \oplus (\ZZ/2) e_2$
  \begin{equation}\label{st3}
 Mon (\be_1)=e_1, \ Mon (\al_2)= e_2, Mon (\be_2)\in \{0, e_1\}; or
 \end{equation}
  \item[(II-1)]  $k=0$, $r=2$, $T =(\ZZ/2 )e_1 \oplus (\ZZ/2) e_2$, $Mon (\be_1)$  is trivial, $Mon (\al_2)=e_1, Mon (\be_2)=e_2$; or
   \item[(II-2)]   $k=0$, $r \geq 2$, $T$ is cyclic, $Mon (\be_1)$  is trivial, $Mon (\al_2)=e_1, Mon (\be_2)=0$; or
    \item[(II-3)]  $k=0$, $r \geq 2$, $T$ is cyclic, $Mon (\be_1)=e_1$, $Mon (\al_2) \in T \setminus \{0\},  Mon (\be_2)=0$; or

   \item[(III)]
    $k \geq 2$  even, $r=2$, $T =(\ZZ/2 )e_1 \oplus (\ZZ/2) e_2$.  Here $t_j=e_2$ for all $j$, $Mon(\be_1)\in \{0, e_1\},  Mon(\al_2)
   = e_1,  Mon (\be_2) =0$.
     \end{enumerate}\end{enumerate}
\end{lemma}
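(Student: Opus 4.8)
The plan is to reduce everything to linear algebra by exploiting that $G$ is abelian, so that the monodromy $Mon$ factors through the abelianization $H^{orb} = (\pi^{orb})^{ab}$ and is completely encoded by the images $a_i := Mon(\al_i)$, $b_i := Mon(\be_i) \in G$ and $t_j := Mon(\ga_j) \in T$, subject only to the abelianized relation $\sum_j t_j = 0$, to $t_j$ having order $m_j$, and to surjectivity of $Mon$. Writing $G = T \times \mu_r$, I first record the two consequences of abelianness that drive the normalization: the multiset $\{t_1, \dots, t_k\}$ is an invariant up to permutation (conjugation is trivial and Hurwitz moves merely permute it), and the projection $p : G \to \mu_r$ kills every $t_j$, so $p\circ Mon$ annihilates all $\ga_j$ and descends to a homomorphism $H_1(B,\ZZ) = \ZZ^{2h} \to \mu_r$, which is onto because the $t_j$ contribute nothing to $\mu_r$.

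First I would normalize the $\mu_r$-component. The transformations (b), (c), (d) act on $H_1(B,\ZZ)$ by symplectic automorphisms — (b) and (d) as transvections/handle mixings, (c) by permutation — and these suffice to realize the elements of $\mathrm{Sp}(2h,\ZZ)$ I need. Since $\mathrm{Sp}(2h,\ZZ) \to \mathrm{Sp}(2h,\ZZ/r)$ is surjective and the symplectic group over $\ZZ/r$ acts transitively on unimodular covectors (lift to a primitive covector over $\ZZ$ and use transitivity there), I can bring the surjection $\ZZ^{2h} \to \mu_r$ to the standard form $\al_1 \mapsto \e$ a generator, with all other $\al_i, \be_i \mapsto 0$. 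By the remark following (d), for $G$ abelian these moves fix the local monodromies $t_j$; hence afterwards $Mon(\al_1)$ generates $\mu_r$ while $Mon(\be_j), Mon(\al_i) \in T$ for all $j$ and all $i \geq 2$, which is the first assertion. For the second, transformation (a) sends $Mon(\al_i) \mapsto Mon(\al_i) \pm t_j$ and $Mon(\be_j) \mapsto Mon(\be_j)\pm t_{j'}$ without disturbing the $\mu_r$-normalization; so if $\{t_j\}$ generates $T$, each of these $T$-valued images lies in $\langle t_j\rangle$ and can be absorbed into the $\ga$'s and made trivial.

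With these reductions in hand I would enumerate the normal forms by a finite case analysis on the genus $h \in \{1,2\}$, on whether $\langle t_1,\dots,t_k\rangle = T$, and on the structure of $T$ and the value of $r$. By Lemma \ref{Translations} the case $h=1$ forces $k \geq 2$; when $\{t_j\}$ generates $T$ the previous step lands us in (I-1) (after reordering the branch points so that an initial segment, renamed $t_1, t_2$, generates $T$ whenever a generating pair exists), and when it does not, the missing generator of $T/\langle t_j\rangle$ must be carried by $Mon(\be_1)$; tracking this against $\sum_j t_j = 0$ and the residual symplectic freedom pins $r=2$, $T = (\ZZ/2)^2$ and the assignment \eqref{st2}, i.e. (I-3). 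For $h = 2$ I would use transformation (d) to slide the second handle against the first: whenever it can be arranged that $Mon(\al_2)=Mon(\be_2)=0$ we fall into Case (I), recovering (I-1) or the unramified form (I-2) with $Mon(\be_1)$ generating a cyclic $T$; otherwise the second handle must supply a generator of $T$, and combining (a), (d) with $\sum_j t_j = 0$ forces either $k=0$ with the cyclic or $(\ZZ/2)^2$ patterns \eqref{st3} and (II-1)--(II-3), or $k \geq 2$ even with all $t_j = e_2$ and the pattern (III).

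The main obstacle I expect is precisely this last layer: the subcases where the local monodromies fail to generate $T$, so that the two handles and the local data jointly generate $G$. There one must exploit simultaneously transformation (a) (to absorb local monodromies), transformation (d) (to transfer the $T$-content between the two handles) and the single constraint $\sum_j t_j = 0$, and then verify both that every surviving configuration is equivalent to one on the short list (I-3), (II), (II-1)--(II-3), (III), and that no case is overlooked. The $\mu_r$-normalization and the generating cases (I-1), (I-2) are, by contrast, routine once the reduction to $H^{orb}$ is set up.
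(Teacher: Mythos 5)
Your first step---projecting to $\mu_r$ and using transitivity of $\mathrm{Sp}(2h,\ZZ)$ on surjections $\ZZ^{2h}\to \ZZ/r$---is a legitimate and more conceptual alternative to the paper's hands-on use of moves b), c), d) (the paper instead uses that $r<6$ is a prime power to find one generator surjecting onto $\mu_r$, then cleans up handle by handle). But it proves strictly less than the first assertion of the Lemma, and the deficit is exactly what your case analysis cannot survive. Your normalization controls only the image of $Mon(\al_1)$ in $\mu_r$: afterwards $Mon(\al_1)=(t_0,\e)$ with an undetermined $t_0\in T$, whereas the Lemma's assertion and every normal form (I-1)--(III) require $Mon(\al_1)=\e$ exactly, i.e.\ $t_0=0$. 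When the $t_j$ generate $T$, move a) absorbs $t_0$ (you only apply a) to $Mon(\al_i)$, $i\geq 2$, and $Mon(\be_j)$, but the same move works on $\al_1$); in precisely the subcases you yourself flag as the main obstacle---where the $t_j$ do \emph{not} generate $T$---nothing in your scheme removes $t_0$, since any sequence of moves transferring $T$-content out of $\al_1$ must pass through configurations that destroy your $\mu_r$-normalization. Concretely, take $G=\ZZ/2\times\mu_4$, $T=(\ZZ/2)u$, $h=2$, $k=0$ and the surjective monodromy $\bigl((u,\e),0,u,0\bigr)$: it satisfies your normalized form, equals none of (I-2), (II-2), (II-3), and it is \emph{not} equivalent to any Case (I) form (the class $\sum_i Mon(\al_i)\wedge Mon(\be_i)\in\wedge^2 G$ is invariant under all the moves and vanishes here, while it is $\e\wedge u\neq 0$ for (I-2)); it is equivalent to (II-2), but only after $t_0=u$ is removed from $\al_1$. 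The paper disposes of this at the outset: it normalizes $Mon(\al_1)=\e$ as an element of $G$ by changing the origin of $E$, i.e.\ composing $Mon$ with the automorphism $(s,\e^j)\mapsto(s-jt_0,\e^j)$ of $G$ coming from conjugating the action on $E$ by a translation (spelled out in the proof of Lemma \ref{sporadic-mon}, used silently in Lemma \ref{Mon}); this fixes $T$ pointwise, so it kills $t_0$ without disturbing the other monodromies, and it is what lets all later d)-moves touch only $\mu_r$-components. In the example it turns $\bigl((u,\e),0,u,0\bigr)$ into the normal form (II-2) in one stroke. Your proposal is missing this idea, and without it the claimed dichotomies for $h=1,2$ are applied to data they do not cover.

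The second gap is one of completeness: the content of the Lemma \emph{is} the explicit finite list, and your proposal defers exactly that enumeration (``verify both that every surviving configuration is equivalent to one on the short list \dots and that no case is overlooked'') rather than carrying it out. The paper does this work with two structural observations that your sketch never isolates: if $k>0$ then $k\geq 2$ (since $\sum_j t_j=0$), and the local monodromies automatically generate $T$ except when $r=2$ and $T=(\ZZ/2)e_1\oplus(\ZZ/2)e_2$ (in every other abelian case $T$ has prime order)---this is what collapses the non-generating possibilities to (I-3) and (III); then, for $h=2$, it replaces $\al_2$ by $\be_2^{-1}$ to put a nontrivial element in position $\al_2$, notes that for $T$ cyclic of prime order this element generates $T$ and forces $Mon(\be_2)=0$ and $k=0$, and splits on $Mon(\be_1)$ trivial or not to reach (II), (II-1)--(II-3). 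Finally, your assertion that moves b), c), d) realize enough of $\mathrm{Sp}(2h,\ZZ)$ is true for $h\leq 2$ but is stated without proof. So the strategy is repairable---add the change-of-origin normalization and then execute the finite case analysis---but as written it establishes a weaker normalization than the Lemma states and leaves its combinatorial core undone.
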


\begin{proof}
The composition of $Mon$ with the surjection $ G \twoheadrightarrow \mu_r$ is surjective, hence there is a global monodromy 
mapping onto a generator of $\mu_r$, since  $ r <6$ is a prime power. 
Without loss of generality, we may assume that this global monodromy is $Mon (\al_1)$ and it equals $\e$. 

With a transformation of type b), we can then achieve that $Mon (\be_1) \in T$.

For $j \geq 2$, 
 after a permutation of indices exchanging $j$ with $i+1$, 
we apply a transformation of type d) with $i=1$ to achieve that $Mon(\al_j)\in T$;
then a transformation of type b) ensures that both $Mon(\al_j), Mon(\be_j)$ have the same image in $\mu_r$
whence another transformation of type d) with $i=1$ yields
 that $Mon (\al_j), Mon ( \be_j) \in T$,
proving the first statement of the lemma
(after another transformation of type b) adjusting $Mod(\be_1)$ to be in $T$ again).
This kind of presentation will thus be fixed throughout the remainder of the proof.

If $ k>0$, we can  add to each global monodromy  any multiple of the local monodromies (transformation of type a)).

Hence, if the local monodromies generate $T$, we can obtain that all the global monodromies except $Mon (\al_1)$
are trivial.   This proves the second statement of the lemma.

Observe further that, if there are local monodromies, then $ k \geq 2$ since $\sum_j t_j=0$.
In particular, $ k \geq 2$ if $h=1$. Since we can change the ordering of the branch points, we may always assume
that the subgroup of $T$ generated by the local monodromies is generated by $t_1, t_2$.

Moreover, if $k>0$, the local monodromies generate $T$ (of prime order) unless  $r=2$, 
$T =(\ZZ/2 )e_1 \oplus (\ZZ/2) e_2$. 

\smallskip

From now on, we assume that $h\leq 2$.
Assume first that  $h=1$ or $h=2$ and $Mon (\al_2)$ and $Mon (\be_2)$ are trivial.

If $k=0$, then necessarily $h\geq 2$ by the same argument as in the proof of Lemma \ref{Translations},
hence 
we are in case (I-2).

 In the case $k \geq 2$ and $ r \geq 3$, 
since  there are local monodromies, these   generate $T$ (cyclic of prime order), hence we reach   case (I-1).

For $r=2$, there is the possibility
 that the local monodromies do not generate $T$, in which case 
we reach the normal form \eqref{st2} of case (I-3).

\smallskip

Assume now that $ h=2$, 
  but   $Mon (\al_2)$ or $Mon (\be_2)$ are nontrivial
elements of  $T$).

 We may assume, possibly 
 replacing $\al_2$ by $\be_2^{-1}$,
that  $Mon (\al_2) \in T$ is nontrivial. In particular, if $r \geq 3$, we may assume that $Mon (\al_2)$ generates $T$,
while $Mon (\be_2) =0$.

We also in general split into two cases according to

i) $Mon (\be_1)$ is trivial

ii)   $Mon (\be_1)$ is nontrivial.

Hence, for $r \geq 3$, and more generally for $T$ cyclic
($T$ is then of prime order), 
we get cases (II-2), (II-3),  since $k=0$ is automatic (else the local monodromies would generate $T$).

Look now at the   non-cyclic case $r=2$, $T =(\ZZ/2 )e_1 \oplus (\ZZ/2) e_2$
 and assume first that $k=0$; 
in the alternative  i) we have that $Mon (\al_2), Mon (\be_2) \in T$ generate $T$, hence we get case (II-1).

In the alternative  ii), we may assume $Mon (\be_1)=e_1$, and, using that the elements 
$Mon (\al_2), Mon (\be_2) \in T$, we get the normal form \eqref{st3} 
of case (II), after some transformation of type b).

There  remains to study the case $k\geq 2$.
Since the local monodromies do not generate $T$, 
we may assume that $t_j=e_2$ always, and  $Mon (\al_2), Mon (\be_j) \in \{0, e_1\}$.
Since $Mon (\al_2)$ is nontrivial by assumption, this leads to the normal form (III).
\end{proof}

\smallskip

\begin{defin}\label{def:min-mon}
Assume that we have a monodromy $Mon : \pi^{orb} \twoheadrightarrow G$ in normal form as in Lemma \ref{Mon}
with $G$ abelian (so $h=1,2$).

$Mon$ is said to be {\bf minimal} if either 
\begin{itemize}
\item  $h=2$ and we are in case (I-2), or (II), or (II-i) for $i=1,2,3$ (all with $k=0$), or in case (III) with $k=2$; or
\item $h=1$ and $k=2$, $r \in \{2,3,4\}$, and we are in  case (I-1), or $r=2$ and case (I-3); or
\item $h=1$, $r=2$, and we are  in case (I-1) with $k=3$.
\end{itemize}

$Mon$ is said to be  {\bf minimal and big} if it is minimal, $h=1$  and $|G| \geq 8$, which is equivalent to

\begin{itemize}
\item $h=1$ and $k=2$, $r \in \{3,4\}$, and we are in  case (I-1) 
\item $h=1$ and $k=2$, $r=2$ and we are in case (I-3)
\item $h=1$, $r=2$, and we are  in case (I-1) with $k=3$.
\end{itemize}

Then a new  homomorphism $Mon' : (\pi^{orb})' \ra G$ is called a {\bf simplication} of $Mon$
 if either

(i) $ h ' = h-1 = 1$, $a_2=b_2=0$ and  $k=k' \geq 2$, or

(ii) $h' = h$, $ k' = k-1 \geq 2$ and $t'_{k-1} = t_{k-1} + t_k \neq 0$, or

(iii) $h' = h$, $ k' = k-2 $,  $k' \geq 2$ if $h=1$,  and $t_{k-1} + t_k=0$,

and $Mon' = Mon$ on all  the remaining generators,  except for $\ga_{k-1}$ in case (ii).

Observe that the simplified homomorphism 
$$Mon': (\pi^{orb})' \twoheadrightarrow G,$$
is a monodromy (that is, it is surjective)  
and  in normal form, except when $r=2$, we are in case (I-1) with $T$ non-cyclic 
and $k=3$,
or in case (III) with $Mon(\be_1)=0, Mon(\al_2)=e_1$.

\end{defin}

\begin{remark}
Clearly every monodromy as in Lemma \ref{Mon} can be brought to a minimal one 
through a sequence of simplifications.
\end{remark}

\begin{lemma}\label{sporadic-mon}
Assume that we are in the sporadic case where 
$$G = ((\ZZ/2)e_1 \oplus (\ZZ/2)e_2) \rtimes \mu_4 : = T \rtimes \mu_4,$$ and all the local monodromies are translations.

Then, after  a change of system of geometric generators  for $\pi^{orb}$,  we may assume that  $Mon (\al_1)$ is the generator 
$\e= i$ of $\mu_4$  and that the other global monodromies  
 $Mon (\al_i), Mon (\be_j) \in T$, for  all $j$ and for 
 $i  \geq 2$.
 
We have the following normal forms with  $Mon (\al_i)= Mon (\be_j) =0$ for $i\geq 2, $ and $\forall j$,
where $k$ is the number of branch points:
 
 Case (III-k): $ \be_1 \mapsto 0, \ \ga_1  \mapsto e_1,  \ \ga_{2}  \mapsto e_2, \ \ga_j \mapsto d_j \in T, \ \sum_3^k d_j =  e_1 + e_2   $
 
 Case (IV-2k'): $ \be_1 \mapsto 0, \ \ga_1, \dots , \ga_{2k'}  \mapsto e_2$
 
 Case (V-(1+2k')): $ \be_1 \mapsto e_1, \ \ga_j \mapsto e_1 + e_2 $.

The above monodromies simplify to the minimal cases (IV-2) and (V-1).

 Indeed, for $ k \geq 1$, we can achieve that  $Mon (\al_i)=0$ for $i\geq 2 $,
and we have the following other normal forms.\footnote{which we do not claim to form a complete list}
 
 Case (V*(2 k')): $ \be_1 \mapsto 0,  \be_2 \mapsto e_1, \ \ga_j \mapsto  e_2 , \forall j$, $\be_j \mapsto b_j =0 \ \forall j \geq 3$,
 which simplifies to (V*(2)).
 
 Case (V**(2k')): $ \be_1 \mapsto 0,  \be_2 \mapsto e_1, \ \ga_j \mapsto  e_1 + e_2 , \forall j$, $\be_j \mapsto b_j =0 \ \forall j \geq 3$,
 which simplifies to (V**(2)).

For $k=0$ we have also the monodromies: 

Case (VI): $\al_1 \mapsto \e,  \al_2 \mapsto e_1, \ \be_2 \mapsto e_2$ and all other monodromies trivial, or

Case (VI*): $\al_1 \mapsto \e,  \be_1 \mapsto e_1 + e_2, 
\ \al_2 \mapsto e_1 $ and all other monodromies trivial, or

Case (VII): $ \al_1 \mapsto \e,  \al_2 \mapsto e_1+ e_2 , \ \al_3 \mapsto e_2$ and all other monodromies trivial,

Case (VII*): $ \al_1 \mapsto \e,  \al_2 \mapsto e_1, \ \al_3 \mapsto e_2$ and all other monodromies trivial,

Case (VIII): $ \al_1 \mapsto \e,  \ \al_2 \mapsto e_1$ and all other monodromies trivial.

Case (VI) simplifies to (VI) with $h=2$, Case (VI*) simplifies to (VI*) with $h=2$, Case (VII) simplifies to (VII) with $h=3$, Case (VII*) simplifies to (VII*) with $h=3$,
Case (VIII) simplifies to 
(VIII) with $h=2$.
\end{lemma}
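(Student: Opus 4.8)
The plan is to follow the blueprint of the proof of Lemma~\ref{Mon}, replacing the purely abelian bookkeeping by the (mild) non-abelian one dictated by the sporadic group $G = T \rtimes \mu_4$ of Lemma~\ref{G}. Two structural facts drive everything. First, the translation subgroup $T = E[2]$ is normal in $G$, so the elementary transformations (a)--(d) of a system of geometric generators, being changes of generating system coming from the mapping class group of the punctured base, keep all local monodromies $t_j = Mon(\ga_j)$ inside $T$. Second, in the coordinates $T = \ZZ[i]/2\ZZ[i]$ with $e_1 = 1$ and $e_2 = i$, the generator $\e = i$ of $\mu_4$ acts on $T$ by the involution $\sigma$ that swaps $e_1 \leftrightarrow e_2$; hence $e_1 + e_2$ is the unique nonzero $\sigma$-fixed element, and $\sigma(v) + v$ equals $e_1+e_2$ or $0$ according as the sum of the two coordinates of $v$ is odd or even.

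First I would prove the opening normalization exactly as in Lemma~\ref{Mon}. Composing $Mon$ with $G \twoheadrightarrow \mu_4$ is surjective while every local monodromy dies in $\mu_4$; since $\mu_4$ is cyclic, some global monodromy maps onto a generator of $\mu_4$, and after a transformation of type (c) (and, if needed, inverting a generator) we may take $Mon(\al_1) = \e$. The same sequence of moves used in Lemma~\ref{Mon} --- one transformation of type (b) to put $Mon(\be_1) \in T$, then for each $i \geq 2$ a transformation of type (d) with $i = 1$ followed by one of type (b) --- pushes every remaining global monodromy into $T$. The one extra point to record is that, by the second fact above, a type (a) move $\al_1 \mapsto \al_1 \ga_j$ would twist the $T$-component of $Mon(\al_1)$ by $\sigma(t_j)$ rather than $t_j$; accordingly, in all later simplifications I would only add local monodromies to the $T$-valued generators $\be_1, \al_i, \be_i$, on which addition is honest.

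Next I would extract the two constraints that govern the list. For surjectivity onto $T$: since $\sigma$ swaps $e_1$ and $e_2$, the image contains all of $T$ as soon as some monodromy has $T$-component of odd coordinate sum (i.e. equal to $e_1$ or $e_2$), whereas $\e$ together with only $e_1+e_2$ generates the proper index-two subgroup $\langle \e\rangle \times \langle e_1+e_2\rangle$. For the product relation: reading $\prod_i [\al_i,\be_i]\prod_j \ga_j = 1$ in $G^{ab}$ gives $\sum_j t_j = \sum_i [\al_i,\be_i]$, where by the second fact each handle commutator lies in $\langle e_1+e_2\rangle$ and is nonzero exactly when that handle pairs the $\e$-part with a translation of odd coordinate sum; in our normalized form this means $[\al_1,\be_1] = e_1+e_2$ iff $Mon(\be_1)\in\{e_1,e_2\}$, all other handles (whose monodromies now lie in the abelian group $T$) contributing $0$. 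This parity bookkeeping is precisely what fixes the parities in the statement: an even number of local monodromies equal to $e_2$ in Case (IV-$2k'$), and an odd number equal to $e_1+e_2$ in Case (V-$(1+2k')$), the extra $e_1+e_2$ being produced by $Mon(\be_1)=e_1$. With these in hand I would split into $k \geq 1$ and $k = 0$. For $k \geq 1$, reordering via type (c) and absorbing locals into the $T$-valued globals via type (a) bring things to (III-$k$), (IV-$2k'$), (V-$(1+2k')$) at genus $h=1$, and to the auxiliary $h \geq 2$ forms (V*) and (V**) when a second handle must carry an $e_1$. For $k = 0$, all of $T$ has to be generated by the handle monodromies, and reducing each $Mon(\al_i), Mon(\be_i)$ to a standard generator of $T$ yields (VI), (VI*), (VII), (VII*), (VIII) according to how many handles and which central or non-central translations are needed.

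Finally I would verify the simplification claims by matching each form to one of the reduction moves (i)--(iii) of Definition~\ref{def:min-mon}: a type (i) move deletes a handle whose two monodromies are trivial (driving the $k=0$ forms down to their minimal genus $2$ or $3$), a type (ii) move merges two consecutive local monodromies, and a type (iii) move deletes a cancelling pair with $t_{k-1}+t_k=0$; in each case one checks that surjectivity and the above parity are preserved, so that (IV-$2k'$) collapses to (IV-$2$), (V-$(1+2k')$) to (V-$1$), and the starred and $k=0$ forms to their stated minimal genus. The main obstacle, and the only place that genuinely needs care, is this non-abelian bookkeeping of the product relation: one must track the $\sigma$-twist in every commutator and in every type (a) absorption, since it is exactly this twist that manufactures the element $e_1+e_2$ and thereby forces the parity distinction between Cases (IV) and (V) as well as the appearance of $Mon(\be_1)=e_1$ in (V) and $Mon(\be_1)=e_1+e_2$ in (VI*). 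Exhaustiveness of the starred families for large $k$ is neither needed nor claimed, so it suffices to exhibit each stated normal form together with its simplification.
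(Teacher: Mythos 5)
Your proposal is correct and follows essentially the same route as the paper's own proof: normalize the generators so that $Mon(\al_1)$ lies over the generator of $\mu_4$ and all other global monodromies lie in $T$ (the paper makes this step legitimate in the non-abelian setting by applying move (d) to the $\mu_4$-quotient of $G$, and by changing the origin of $E$ so that $Mon(\al_1)$ becomes the pure rotation $\e$ --- the one point your ``type (c) plus inversion'' phrasing glosses over, since those moves cannot remove a translation part from $Mon(\al_1)$), then exploit the commutator identity $[\e,v]=\sigma(v)+v$, the product relation, and the fact that $\langle \e, e_1+e_2\rangle$ is a proper subgroup of $G$ to force the parities and the case division, and finally check the simplifications against Definition \ref{def:min-mon}. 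Your only substantive difference is organizational: you extract the parity and surjectivity constraints once and for all, where the paper derives them ad hoc within each case.
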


\begin{proof}
    Since the local monodromies are in $T$, we may assume that $\al_1$ maps onto a generator of $\mu_4$
via the surjection $ G \ra \mu_4$; and, changing the origin in the genus one curve $E$, we may assume that 
 $Mon (\al_1) = \e$.
 
  Then, by transformations as in Lemma \ref{Mon} we may assume that all other elements $Mon (\al_i), Mon (\be_j)$
 belong to $T$: for the transformation d), even if $G$ is not abelian, yet we may apply it to the quotient group $\mu_4$
 of $G$.
 
 In the case where the local monodromies generate $T$, we may multiply the global generators by local generators,
 and obtain that $Mon (\al_i), Mon (\be_j)$ for $ i \geq2, j \geq 1$ are trivial. Then we are exactly in Case (III-k).
 
 If instead there are  local monodromies and they generate a proper subgroup $T'$ of $T$, we may assume that
 $\ga_j \mapsto e_2$ for all $j$, or  $\ga_j \mapsto e_1 + e_2$ for all $j$, depending whether  $T'$ is not $\e$-invariant
 or conversely.
 
 Then we may multiply the global generators by local generators and obtain that the other
 global  monodromies are either trivial,
 or equal to $e_1$. If they are all trivial, then we are in case (IV-k), for which we observe that $\e, \ e_2$ generate $G$,
 while $\e$ and $e_1 + e_2$ do not generate $G$.
 
 If instead they cannot be all made trivial, we may achieve 
by  moves  of type b) that all other $Mon(\al_j)$ are trivial for $ j \geq 2$.
  
If   
 $\be_1 \mapsto e_1$, and if  $ Mon (\be_j)$ are trivial, for   
 $ j \geq 2$, we are in case (V-(1 + 2k')): since $[\e, e_1]=e_1+e_2$ the sum of the local monodromies must be equal to $e_1 + e_2$,
 hence $\ga_j \mapsto e_1 + e_2$, $\forall j$,
 since we assume that $T$ is not generated by the local monodromies.
 
 If $\be_1\mapsto 0 $, we may assume $\be_2\mapsto e_1$, and if
 $ Mon (\be_j)$ are trivial, for   
 $ j \geq 3$, we are in case (V*(2k') or (V**(2k')).

 \medskip

 Now, Case (III-k) simplifies to Case (III-3), which in turn simplifies to case (IV-2), as well as case (IV-2k')
 does simplify to case (IV-2); while case (V-k) simplifies to (V-1).

 In  the case $k=0$ we observe that the commutators $[\al_j, \be_j] $ for $ j \geq 2$ map to the identity, hence
 also $[\al_1, \be_1] $  maps to the identity and 
 it follows that the monodromy of $\be_1$ is either trivial or equal to $e_1 + e_2$.

 In this latter case, we may assume that $\al_2 \mapsto e_1$, 
 and if all the other monodromies are trivial, we get (VI*).

 In the former case,  i.e.\ $\beta_1$ is trivial,
 assume that the monodromy
 restricted   to the subgroup generated by the $\al_i, \be_j$ with $i,j \geq 2$  corresponds to a surjection 
 to $T$,  hence to a surjection of the homology of a curve of genus $h-1$ to $T$, and we have several  normal forms,
 (depending on the nontriviality or conversely  of the cup product of the two corresponding first cohomology elements,
 and on the action of $\e$) leading to cases (VI), (VII), (VII*), (VIII).

 We can simplify these monodromies  by reducing the genus, until we find nontrivial monodromies, that is, until
 $h=3$ in cases (VII), (VII*), or until we reach  $h=2$ for cases (VI), (VIII).
\end{proof}

Arguing as in Lemma \ref{Mon} and Lemma \ref{sporadic-mon}, one can classify the normal forms for the monodromies in the sporadic case, but the list gets longer and longer as $h$ increases. We report here the complete list for $h=1,2$.

\begin{remark}\label{rem:list_sporadic}
Assume $G = ((\ZZ/2)e_1 \oplus (\ZZ/2)e_2) \rtimes \mu_4$, and that all local monodromies are translations, then  every monodromy is then equivalent to one of the following normal forms for $h=1,2$. 

\begin{itemize}[leftmargin=5pt]
    \item 

If $h=1$, $k\geq 1$ and we are in
 Case (III-k), or Case (IV-2k'), or  Case (V-(1+2k')). 

\item
If $h=2$ and $k\geq 1$, we are in
 Case (III-k), or Case (IV- 2k'), or  Case (V-(1+2k')),
 or Case  (V*-(2k')), or  Case (V**-(2k')), or 
 
Case (V***-(1+2k')): $\al_1\mapsto \e$, $\be_1\mapsto e_1$, 
$\al_2\mapsto 0$, $\be_2\mapsto e_1$, $\ga_j \mapsto  e_1+e_2$,
which simplifies to Case (V***-1).
 
\item
 
If $h=2$ and $k=0$, we are in
Case (VI),  or Case (VI*), or Case (VIII),  or 

Case (VI**): $(\al_1,\be_1,\al_2,\be_2)\mapsto (\e, e_1+e_2, e_1, e_2)$.
\end{itemize}
\end{remark}

\begin{lemma}\label{simplify}
The effect of simplification replaces the curve $C$ (of genus $h \geq 2$) by a curve $C^0$ such that
$H_1 (C^0 , \ZZ) $ is a direct summand in $H_1 (C , \ZZ) $.

In particular, if we have a homology class $ 0 \neq c \in H_1 (C^0 , \ZZ) $ then $c$ does not map to zero  in $H_1 (C , \ZZ) $.
\end{lemma}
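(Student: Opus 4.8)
The plan is to realize $C$ and its simplification $C^0$ as two $G$-coverings that agree away from a small region, to compare their integral homologies by excision, and then to exploit the freeness of $H_1$ of a smooth curve in order to split off $H_1(C^0)$.

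First I would fix the common part. In each of the three types of Definition \ref{def:min-mon} the base orbifold is altered only inside a small disc $D$ (types (ii), (iii)) or inside a once-holed torus (type (i)), while $Mon'=Mon$ on the complementary geometric generators. Writing $B_\circ$ for this fixed complement and $C_\circ$ for its preimage, the coverings $C\to B$ and $C^0\to B^0$ restrict to the \emph{same} covering $C_\circ\to B_\circ$, with the same boundary $\partial:=\partial C_\circ$, a disjoint union of circles: indeed the monodromy along $\partial B_\circ$ agrees for the two data, being trivial in types (i) and (iii) (there $[\al_h,\be_h]\mapsto 0$, resp. $t_{k-1}+t_k=0$) and equal to $t_{k-1}+t_k=t'_{k-1}$ in type (ii). Thus $C=C_\circ\cup_\partial P$ and $C^0=C_\circ\cup_\partial P^0$. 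The decisive local observation is that in all three cases $P^0$ is a disjoint union of discs: in type (i) the discarded handle has trivial monodromy and the capping disc lifts to $|G|$ discs; in type (iii) the disc $D^0$ carries no branch point and lifts to $|G|$ discs; in type (ii) the disc $D^0$ carries a single branch point of monodromy $t'_{k-1}$ and lifts to $|G|/\mathrm{ord}(t'_{k-1})$ discs. By contrast the internal structure of $P$ (handles, holed spheres, or components of positive genus) will play no role.

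Next I would compare the two homologies via the long exact sequences of the pairs $(C,P)$ and $(C^0,P^0)$. Excising the interiors of $P$ and $P^0$ gives canonical isomorphisms $H_1(C,P)\cong H_1(C_\circ,\partial)\cong H_1(C^0,P^0)=:M$, and the connecting homomorphisms land in the reduced $\tilde H_0$ of the caps. Since $P^0$ consists of discs and $C^0$ is connected, the sequence of $(C^0,P^0)$ collapses to $0\to H_1(C^0,\ZZ)\xrightarrow{b^0} M\xrightarrow{\partial^0}\tilde H_0(P^0)\to 0$, so $H_1(C^0,\ZZ)\cong\ker\partial^0\subseteq M$. For $(C,P)$ one gets that $b\colon H_1(C,\ZZ)\to M$ is surjective onto $\ker\partial$; moreover the natural surjection $\pi\colon\tilde H_0(P^0)\twoheadrightarrow\tilde H_0(P)$ recording which component of $P$ contains each cap satisfies $\partial=\pi\circ\partial^0$, whence $\ker\partial^0\subseteq\ker\partial$ with quotient $\ker\partial/\ker\partial^0$ a subgroup of the free group $\tilde H_0(P^0)$, hence itself free. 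Therefore $\ker\partial^0$ is a direct summand of $\ker\partial$, and composing $b\colon H_1(C,\ZZ)\twoheadrightarrow\ker\partial$ with the projection $\ker\partial\twoheadrightarrow\ker\partial^0\cong H_1(C^0,\ZZ)$ yields a surjection $\rho\colon H_1(C,\ZZ)\twoheadrightarrow H_1(C^0,\ZZ)$.

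The slick finish is that $C^0$ is a smooth projective curve, so $H_1(C^0,\ZZ)\cong\ZZ^{2\,\mathrm{genus}(C^0)}$ is \emph{free}; a surjection onto a free abelian group splits, so $H_1(C^0,\ZZ)$ is a direct summand of $H_1(C,\ZZ)$. A section $s$ of $\rho$ embeds $H_1(C^0,\ZZ)$ as this summand, and for $0\neq c$ we have $\rho(s(c))=c\neq 0$, so the image of $c$ in $H_1(C,\ZZ)$ is nonzero, which is the \emph{in particular} clause. The main obstacle is purely the bookkeeping of the zeroth-homology (connectivity) terms: one must check that $C_\circ$ is connected, which is forced because capping $C^0$ with the discs of $P^0$ cannot merge components and $C^0$ is connected, and one must track how the $\partial$-circles are grouped by the components of $P$ (the map $\pi$ above). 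Once this is settled, the freeness of $H_1$ of a curve is exactly what upgrades the rational splitting to an honest integral direct summand. Alternatively, the same surjection $\rho$ may be recognised as the specialization map of a semistable degeneration of $C$ whose special fibre contains $C^0$ as a component and contributes otherwise only rational components and a dual graph.
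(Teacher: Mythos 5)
Your proof is correct, but it takes a genuinely different route from the paper's. The paper argues by cases: for a simplification of type (i) it simply observes that $C$ is obtained from $C^0$ by attaching $|G|$ handles; for types (ii) and (iii) it uses a degeneration $C_t \ra C_0$ in which the last two branch points coalesce, so that $C^0$ is the normalization of the nodal curve $C_0$, the second van Kampen theorem gives $H_1(C_0,\ZZ)=H_1(C^0,\ZZ)\oplus\ZZ^m$, and the specialization map $H_1(C,\ZZ)=H_1(C_t,\ZZ)\ra H_1(C_0,\ZZ)$ is surjective with kernel the free group $V$ of vanishing cycles, a direct summand because the quotient is torsion free. You instead treat all three cases uniformly and purely topologically: both coverings restrict to the same covering $C_\circ$ over the unmodified part of the base (your verification of the boundary monodromies is the point that makes this legitimate), the cap $P^0\subset C^0$ is always a disjoint union of discs --- your key local observation, including the branched case (ii) --- and excision plus the long exact sequences of the pairs $(C,P)$ and $(C^0,P^0)$ yield a surjection $\rho\colon H_1(C,\ZZ)\twoheadrightarrow H_1(C^0,\ZZ)$, which splits because $H_1(C^0,\ZZ)$ is free. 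The splitting mechanisms are of the same flavour in both arguments (freeness, respectively torsion-freeness, of the relevant abelian group), but the surjection is produced by different tools: specialization and vanishing cycles there, elementary cut-and-paste here. What your version buys: no case distinction, no appeal to families of curves, and the explicit fact that, independently of the choice of section, $\rho$ carries the class in $H_1(C,\ZZ)$ of any cycle supported on the common piece $C_\circ$ to the class of the same cycle in $H_1(C^0,\ZZ)$ --- exactly the compatibility needed when the lemma is applied in step (5) of Theorem \ref{thm2} to commutators of lifts of the surviving generators, and a good starting point for the $G$-equivariant extension invoked there, since all of your spaces and maps except the final choice of splitting are $G$-equivariant. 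What the paper's version buys: brevity (case (i) is dispatched in one line) and a geometric identification of the complementary summand as vanishing cycles plus the classes coming from the dual graph. Your closing remark about recognising $\rho$ as the specialization map of a semistable degeneration is, in effect, a sketch of the paper's own proof.
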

\begin{proof}
In case (i) $C$ is obtained from  $C^0$ by attaching $|G|$ handles, hence the assertion follows rightaway.

In cases (ii) and (iii) we have a family $C_t$ tending to $C_0$, letting the last two branch points coalesce to a single one,
possibly with trivial monodromy.

In general, $C^0$ will be the normalization of $C_0$. By the second van Kampen's Theorem
$$ \pi_1 (C_0) =  \pi_1 (C^0)* \ZZ* \dots * \ZZ \Rightarrow   H_1 (C_0, \ZZ) =  H_1 (C^0, \ZZ )\oplus \ZZ^m.$$
Whereas $C $ is homeomorphic to $C_t$ for $t \neq 0$, and we have a surjection
$$ H_1 (C_t, \ZZ) \ra   H_1 (C_0, \ZZ )$$
whose kernel is generated by the vanishing cycles, which generate a free abelian group $V$;
here $V$ is a
direct summand of $H_1 (C_t, \ZZ)$  since the quotient is torsion free.
Hence 
 $$H_1 (C , \ZZ) \cong V \oplus H_1 (C_0, \ZZ ) = V \oplus  \ZZ^m \oplus H_1 (C^0, \ZZ ).$$
\end{proof}

\subsection{Some further useful observations}
We collect here some observations  and a criterion which shall be useful in order to    achieve the full classification of
the possible actions of groups of cohomologically trivial transformations.

$\Ga$ is generated by the  subgroup $\Lambda =  \pi_E$ and by elements 
{
$$\ga'_1, \dots, \ga'_{r+2h}$$ }
acting on $\HH \times \CC$
and having  the form 
{
$$ \ga'_j (t,z) = (\ga'_{j,1}  (t),\, \e_j z + x_j),$$}
where the  affine map  $ z \mapsto \e_j  z + x_j$ is in $\sG$, and we let $m_j$ be  its smallest power
 which lies in $\pi_E$; this means, for $\e_j \neq 1$, that $m_j$ is the multiplicative order of $\e_j$,
 while, if $\e_j=1$,  that   $m_j$ is   the smallest multiple such that $ m_j x_j \in \pi_E$.

Since the action of $\De_G$ is free on $C \times E$, all local monodromies of $ C \ra B : = C/G$ 
must act as   translations on $E$: this implies  that 
all  the reductions of the   singular fibres are smooth elliptic and means that, for $i= 1, \dots, r$,  we have $\e_{2h +i} =1$,  hence we shall see that the commutators of the last $r$ generators lie in the commutator subgroup of $\pi^{orb}$. 
In particular, if $G = \mu_r$,  there are no branch points for $ C \ra B = C/G$.

 The first homology group $H_1(S,\ZZ)$, the abelianization of $\Ga$,
 is generated by the image of $\Lambda =  \pi_E$ and by the images of $\ga'_1, \dots, \ga'_{r+ 2h}$,
 which we continue to denote by the same symbol.

 A simple calculation shows that the commutators of the generators are just the commutators of the elements $\ga'_j$,
 which are of the form 
 \begin{equation}\label{comm} \left( [ \ga'_{j,1} , \ga'_{k,1} ] , z + (\e_j -1) x_k - (\e_k -1) x_j \right).
 \end{equation}
 
 Hence we see that in the  case where $\e_j = \e_k = 1$ the  commutator is  in $\pi^{orb}$.
 
 Writing as usual
  $ G = T \rtimes \mu_r$, where $ T < E$ is a finite group of translations, then the second component
  of \eqref{comm}  lies 
  in the lattice $\Lambda_T$, such that  $\Lambda_T / \Lambda = T$.
  
  Writing the elements $f\in G$  through their action on $E$,   the centralizer $\sC(G)$ of $G$ consists of  the elements  $ f(z) = \la z + c$, such that 
  $$ (\la -1)T = 0 , \ \  c \in \Ker (\e -1).$$
  
  In particular, since $\psi_2$ is in the centre of $G$, this applies to $f = \psi_2$,  and we want to see whether  the 
  automorphism  $\psi$ with first component equal to the identity and with second component  $ \psi_2$ leaves the generators $\ga'_i$ fixed.
  
  For the first component $\ga'_{i,1}$ this is obvious since $\psi_1$ is the identity, while for 
$\e_i z + x_i$ we see that conjugation by $f = \psi_2$ yields 
\begin{equation}\label{crucial} 
(\e_i z + x_i ) \mapsto (\e_i z + \la^{-1} ( x_i  + \om_i) ), \  (\e_i -1) c = : \om_i \in \Lambda,
\end{equation}
and the question is whether  $(Id,  \la^{-1} ( x_i  + \om_i) -x_i)$ lies in the commutator subgroup of $\Ga$. 

For $\la=1$, this amounts to $\om_i \in [\Ga, \Ga]$, for $c=0$, it amounts to
$(\la^{-1}  -1) x_i \in [\Ga, \Ga]$.

 In practice, in view  of the exact sequences \eqref{Gamma} and \eqref{coinvariants}, and since $\psi_1$ acts trivially on $\pi_1(C)$,
it suffices by Remark \ref{lifts} to check this for the lifts to $\Ga$ of a system of generators of  $G^{ab}$.

 We assume therefore from now on that, more generally, the elements $\ga'_i$ are just elements of $\Ga$ which
map onto a system of generators for $G$.

 From the previous discussion follows immediately the following criterion.

\begin{criterion}\label{tips}
(i)  In  the  case where  $T$ is cyclic,  then $G$ is 2-generated and the kernel of the map 
 $ H_1(C,\ZZ)_{G} \oplus H_1(E,\ZZ)_{G} \ra H_1(S, \ZZ) $ (with cokernel $G^{ab}$)  is generated by the commutator of two lifts to $\Ga$ of the two generators of $G$.
 
 (ii)
 In the  general case with $G$  abelian, then $G$  is 3-generated, and the kernel is generated by the  three commutators
 of the respective  lifts to $\pi^{orb}$ of the three generators of $G$.
 
 (iii) Assume now that there is a $j$ such that
 $ \ga'_j = (\ga'_{j,1}, \psi_2)$ and pick other elements $\ga'_i$ mapping to a system of generators of $G$. Then 
 $$  [ \ga'_j , \ga'_i ] = ([ \ga'_{j,1} , \ga'_{i,1} ], [ \psi_2 , \ga'_{i,2} ]),$$ 
  hence we  see that the  condition that  $\psi$ acts trivially on $H_1(S, \ZZ)$, which  is equivalent to
  the fact that $(Id, [ \psi_2 , \ga'_{i,2} ])$ lies in $[\Ga, \Ga]$ for all $i$,  boils down to the following property:
 \begin{eqnarray*}
 \begin{matrix}
 \;\;\;
 \text{  the commutators
  $[ \ga'_{j,1} , \ga'_{i,1} ]$, of  the respective  lifts to $\pi^{orb}$ of the}\\
  \text{  chosen system of generators of $G$, are zero in
 $ H_1(C,\ZZ)_{G}$.}
 \end{matrix}
 \end{eqnarray*}
 (iv) Finally, if $\psi_2$ is a translation, since $\psi_1$ is the identity, it suffices to check that
 $\psi$ leaves fixed the generator  $\e \in \mu_r \subset G$: hence it suffices to verify that the 
 commutator of respective  lifts to $\pi^{orb}$ of $\psi_2$ and $\e$ is zero in $ H_1(C,\ZZ)_{G}$.
\end{criterion}

This criterion will be checked in what follows by hand or with machine assistance.

\section{Properly elliptic surfaces with $\chi(\sO_S)=0$}

 As observed in Lemma \ref{lem: k1 str} minimal surfaces with Kodaira dimension $1$ 
and $\chi(S)=0$ are isogenous to a higher elliptic product, and 
case  (I) where $G$ acts by translations on the elliptic curve $E$ was treated in Theorem \ref{pseudo-elliptic};
while  case (II)  -- to be treated now -- was prepared in Section \ref{s:surfs}.

\begin{theorem}\label{lem: act trivial 1}
\label{thm2}
Let  $S$ be a minimal surface with $\kappa(S)=1$ and $\chi(\sO_S)=0$. Write $S=(C\times E)/\Delta_G$ as in Lemma~\ref{lem: k1 str}  and assume  that we are in  case (II), where  $ {\rm genus}(E/G)=0$. 
Then the following holds.
\begin{enumerate}[leftmargin=*]\setcounter{enumi}{-1}

\item
$B:=C/G$ has genus $h\geq 1$.

\item $\Aut_\ZZ(S)$ induces a trivial action on $C/G$ and on $E/G$.
\item
 $\Aut_\ZZ(S)$ is  a priori  isomorphic to a subgroup of one of the following groups:
\[
(\ZZ/2)^2 \times \mu_2,\quad \ZZ/2 \times \mu_4,\quad \ZZ/3\times \mu_3.
\]
 and it is trivial for $G = T \rtimes \mu_6$.
\item
 Indeed $\Aut_\ZZ(S)$ is isomorphic to a subgroup of the following groups
$$(\ZZ/2)^2 , \ZZ/3,$$
and more precisely:
\item  { 
 If $h \geq 2$,
or if $G$ is not abelian, then  $\Aut_\ZZ(S)$ is  a subgroup of one of the following groups
 \[
(\ZZ/2)^2, \quad \ZZ/3,
\]
And the cases $\Aut_\ZZ(S) = \ZZ/2, \ZZ/3$ do in fact occur.

More precisely, if $h \geq 2$, or if $G$ is not abelian and not sporadic ( in the sense of Lemma \ref{G}) , 
 then $\Aut_\ZZ(S)$ acts on $E$ as a group of translations.}
 \item
   When   $h=1$,   $\Aut_\ZZ(S)$  is trivial in the cases $$ G  \cong (\ZZ/2)^2 \times \mu_2,\  \ \ZZ/3\times \mu_3, \ \ZZ/2 \times \mu_2.\ $$
 If instead  $h=1$, $G \cong \ZZ/2 \times \mu_4,$ $\Aut_\ZZ(S)$ is either trivial or $= \ZZ/2$, and the latter case does
effectively occcur.
\item We have  $\max |\Aut_\ZZ(S)|\in \{3,4\}$.
\end{enumerate} 
\end{theorem}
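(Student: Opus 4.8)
The plan is to read off statement (6) as a short formal consequence of parts (3) and (4) of the theorem, by separating an upper and a lower bound and then invoking the fact that the order of a finite group is an integer.

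First I would establish the upper bound. By part (3), for every surface $S$ in the case (II) class under consideration, $\Aut_\ZZ(S)$ is isomorphic to a subgroup of either $(\ZZ/2)^2$ or $\ZZ/3$. Both of these groups have cardinality at most $4$, so $|\Aut_\ZZ(S)| \le 4$ holds uniformly, and hence $\max |\Aut_\ZZ(S)| \le 4$.

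Next I would establish the lower bound. By part (4) --- equivalently by the occurrence assertion (II-c) of Theorem \ref{thm: Main-Theorem-2} --- there is a surface in the class with $\Aut_\ZZ(S) \cong \ZZ/3$, for which $|\Aut_\ZZ(S)| = 3$; thus $\max |\Aut_\ZZ(S)| \ge 3$. Putting the two bounds together gives $3 \le \max |\Aut_\ZZ(S)| \le 4$, and since the cardinality of a finite group is an integer this forces $\max |\Aut_\ZZ(S)| \in \{3,4\}$, which is the assertion.

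I expect the main obstacle to lie not in this deduction but in deciding which of the two values is actually attained, which is precisely why the statement is phrased as a dichotomy. The value $4$ is realized if and only if some surface in the class has $\Aut_\ZZ(S) \cong (\ZZ/2)^2$. Part (5) excludes this when $h=1$, so any hypothetical example must have base genus $h \ge 2$ with $G$ abelian, $r=2$, and the relevant $(\ZZ/2)^2$ sitting inside $Z(G)$. The normal forms of Lemma \ref{Mon}, the sporadic list of Lemma \ref{sporadic-mon}, and the reduction to minimal monodromies via Definition \ref{def:min-mon} and Lemma \ref{simplify} cut the candidates down, but the genuine difficulty is to make two independent central involutions act cohomologically trivially at once: by Criterion \ref{tips} this requires the associated commutators to vanish in $H_1(C,\ZZ)_G$ for both chosen generators simultaneously, and arranging this for two generators rather than a single $\ZZ/2$ is exactly what resists an explicit construction. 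This is why only the sharp two-sided bound is recorded.
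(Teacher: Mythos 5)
Your deduction of part (6) is correct and, taken on its own, coincides with the paper's: the paper's entire proof of (6) is the single line ``The final statement follows by combining (3), (4) and (5)'', which is exactly your upper bound $4$ from (3), lower bound $3$ from the occurrence of $\ZZ/3$ in (4), and integrality. Your closing observation that the dichotomy $3$ versus $4$ is left unresolved also matches the paper, whose appendix only remarks that realizing $\Aut_\ZZ(S)\cong(\ZZ/2)^2$ appears difficult, without deciding the question.

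The genuine gap is one of scope: the statement to be proved is the whole of Theorem \ref{thm2}, parts (0)--(6), and your argument takes (3), (4) and (5) as given, i.e.\ it assumes essentially the entire theorem in order to derive its last, purely formal, clause. All of the mathematical content lies in the parts you cite without proof: (0)--(1) require Lemma \ref{lem: mult fib qf=0}, Lemma \ref{m=2} and Riemann--Hurwitz to show that $\Aut_\ZZ(S)$ acts trivially on $C/G$ and $E/G$; (2) requires the identification $\Aut_\ZZ(S)\subset N_{G\times G}(\Delta_G)/\Delta_G\cong Z(G)$ together with the classification of centres in Lemma \ref{G}; (4) requires the K\"unneth/Schur argument showing that a cohomologically trivial automorphism whose second component is not a translation forces $b_2(S)=2$, hence $h=1$, together with the explicit examples of Sections \ref{main-ex} and \ref{main-ex-rel}; and (5) requires the reduction to minimal monodromies (Lemma \ref{Mon}, Lemma \ref{simplify}), Criterion \ref{tips}, Lemma \ref{Tcyclic}, the Reidemeister--Schreier computation of Proposition \ref{2x2}, and the computer-assisted Theorem \ref{Magma}. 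None of this is reproduced or even sketched, so as a proof of the stated theorem the proposal is incomplete. A secondary inaccuracy in your commentary: a hypothetical $(\ZZ/2)^2$ example need not have $G$ abelian with $r=2$; by Lemma \ref{G} the centre can be $(\ZZ/2)^2$ also for non-abelian $G$ (e.g.\ $Z(G)=T\cap E[2]$ when $r=2$ and $T\not\subset E[2]$), and the sporadic group $E[2]\rtimes\mu_4$ likewise has centre of order $4$; the appendix rules these out only for the finitely many monodromies of its lists, not in general.
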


\begin{proof}

(0) As indicated before, this follows from Lemma \ref{lem: k1 str} (2).

(1) Note that $\Aut_\ZZ(S)$ preserves any fibration structure.

Let $f\colon S\rightarrow B = C/G$ be the fibration induced by the first projection of $C\times E$. Since ${\rm genus}(E/G)=0$, we have $q_f=0$, and hence the cohomology classes of (the reduced divisors associated to) multiple fibres are different from one another by Lemma \ref{lem: mult fib qf=0}. Therefore, $\Aut_\ZZ(S)$ preserves each multiple fibre of $f$. Since the multiple fibres are exactly $f^*b$ for the branch points $b$ of the quotient map $\pi\colon C\rightarrow C/G$, the induced action of $\Aut_\ZZ(S)$ on $C/G$ fixes each branch point of $\pi$.

We claim that  $\Aut_\ZZ(S)$ acts trivially on $C/G$: in fact, $\Aut_\ZZ(S)$ acts trivially on $H^1(C/G, \CC)\hookrightarrow H^1(S,\CC)$, hence this is clear for  $h \geq 2$ by the Lefschetz formula.
 If $h= 1$ then $\Aut_\ZZ(S)$ acts by translations on $C/G$, and since moreover  $\Aut_\ZZ(S)$ fixes pointwise the nonempty set of branch points of $C\rightarrow C/G$, it must act trivially on  $C/G$. 

Next, we show that $\Aut_\ZZ(S)$ acts trivially on $E/G$; indeed, 
by the Riemann--Hurwitz formula, the branch multiplicities  of $E\rightarrow E/G \cong \PP^1$ cannot be of the form $(2,2,m_1,\dots, m_r)$, where the $m_i$ are odd numbers (since $\sum_j (1- \frac{1}{m_j})=1$). It follows from Lemma \ref{m=2}  that $\Aut_\ZZ(S)$ preserves each multiple fibre of the induced fibration 
$$ p \colon S\rightarrow E/G,
$$ and hence the induced action of $\Aut_\ZZ(S)$ on $E/G$ fixes each branch point of $E\rightarrow E/G$. By the Riemann--Hurwitz formula again, there are at least three branch points of  $E\rightarrow E/G$, which are then fixed by the induced action of $\Aut_\ZZ(S)$. It follows that $\Aut_\ZZ(S)$ acts trivially on $E/G$.

\medskip

(2) Since $\Aut_\ZZ(S)$  acts trivially on $C/G$ and on $E/G$, there is an inclusion
\[
\Aut_\ZZ(S) \subset N_{G\times G}(\Delta_G)/\Delta_G \cong Z(G),
\]
where $N_{G\times G}(\Delta_G)$ denotes the normalizer of $\Delta_G$ in $G
\times G$, $Z(G)$ denotes the centre of $G$, and the isomorphism $N_{G\times G}(\Delta_G)/\Delta_G \cong Z(G)$ is given by $(\sigma_1,\sigma_2)\mapsto \sigma_1\sigma_2^{-1}$. Since by assumption $G$ acts faithfully on $E$ and $g(E/G)=0$, we have
$$G=T\rtimes A,
$$
where $T$ consists of translations and $A$ is a nontrivial group   $A \cong \mu_r \cong \ZZ/r$ with $r\in\{2,3,4,6\}$
as studied in Lemma \ref{G}.

Since $\Aut_\ZZ(S)|_{C/G}$ is trivial,  we also have an induced action of $\Aut_\ZZ(S)$ on the fibre $E$ of $f\colon S\rightarrow B$.

Since $\Aut_\ZZ(S)$ can be identified with a subgroup of $Z(G)$, it is isomorphic to a subgroup of one of the following groups (here the $\mu_r$ appearing means that $G = T \rtimes \mu_r$
as in  Lemma \ref{G}):
\[
(\ZZ/2)^2 \times \mu_2,\quad  \ZZ/3 \times \mu_3, \quad  \ZZ/2 \times \mu_4,  \quad \mu_6.
\]

 However, if $\Aut_\ZZ(S) \subset \mu_6$, and it is not trivial, 
 then by Lemma \ref{G} $G = Z(G)$, hence $G = \mu_6$, which is excluded {
by Lemma \ref{Translations} if $h=1$, while if $h\geq 2$ it will be excluded in point (4).} 

This completes the proof of the second assertion of the proposition.

\medskip

(3) follows clearly from (4) and (5).

(4): since $\psi_1$ induces the identity on $C/G$,
  and as observed we may assume that $\psi_1$ is the identity,  we claim that the condition that $\psi$  is numerically trivial 
 is equivalent to the condition that the representations of $G$ corresponding to   the inclusion  character $ A = \mu_r \ra \CC^*$ and to  its complex conjugate do not occur in the representation of $G$ on $H^1 ( C, \CC) $.
 
 In fact, the cohomology of the quotient is the $G$-invariant cohomology of the 
 product $C \times E$, which, by the K\"unneth formula, is the tensor product of the respective cohomologies of $C, E$.
 
For the first cohomology, we have  $ H^1 ( S, \CC) = H^1 ( C, \CC)^G = H^1 ( C/G, \CC)$, because  $H^1 ( E, \CC)^G = 0$, and since we know that $\psi_1$ acts as the identity on $C/G$, the action on this first cohomology group is trivial.

For the second cohomology
 \begin{multline*}
 H^2 ( S, \CC) = \left(H^2(C, \CC)\otimes H^0 ( E, \CC) \right)^G \oplus \left(H^1 ( C, \CC) \otimes H^1 ( E, \CC) \right)^G \\
 \oplus \left(H^0(C, \CC)\otimes H^2 ( E, \CC)\right)^G,
 \end{multline*}
  and the action is trivial on the first and third summand.
 
Write here $H^1 ( C, \CC) = \oplus_j V_j$
 as a sum of irreducible representations. 
 
 We have  $H^1 ( E, \CC) = W \oplus \bar{W}$,
 where the 1-dimensional representation $W = H^0 (E,\Omega^1_E)$ corresponds to  the inclusion  character $ A = \mu_r \ra \CC^*$. 

 Set $W_1 : = W, \ W_2 : = \bar{W}$.
  
 By Schur's lemma,  if $V_j,  W_i$ are irreducible representations, then $ (V_j  \otimes W_i)^G=0$
 unless $ V_j  \cong W_i^{\vee} \cong \bar{W_i}$.
 
 Since $\psi_1$ is the identity on $C$, it acts as the identity on $H^1 ( C, \CC) $, hence on each summand
 $V_j$. Hence, if there is a $j$ such that $V_j \cong \bar{W_i}$, 
 and  $\psi_2$ is not a  translation, then 
 the action on second cohomology  is nontrivial.
 
 \medskip
 Hence, either 
 
 \begin{enumerate}
 \item[(i)] 
 $\psi_2$ is a translation for all 
 $\psi\in\Aut_\ZZ(S)$,  or
 \item[(ii)]  
 there is  a  $\psi\in\Aut_\ZZ(S)$  such that $\psi_2$ is not a translation.
 \end{enumerate}

 In case (i), Lemma \ref{G} leads exactly to the possible groups $Z(G)\cap T$ stated in (4).
 
 {It is important to observe that, again by Lemma \ref{G}, if $G$ is not abelian and not sporadic, then $Z(G) \subset T$,
 hence we are in case (i).} 
 
 In case (ii), we moreover have that 
 no $V_j$ can be isomorphic to $W_1$, nor $W_2$, which is equivalent 
 to  requiring is that $$\left(H^1 ( C, \CC) \otimes H^1 ( E, \CC) \right)^G=0,$$ and in turn this is equivalent 
 to requiring $b_2(S) = 2$.

 In turn, the condition $b_2(S) = 2$, since $q(S) $ equals the genus of $B = C/G$, implies that $e(S) = 4 - 4 h$.
 
 Since we know that $e(S) = 0 = e ( C \times E)$, the conclusion is that
 in case (ii)  it must be $h=1$,
 that is, $B$ is an elliptic curve. 

{
 In Sections \ref{main-ex} and \ref{main-ex-rel}
 we exhibit examples having $h=2$ and realizing $\Aut_\ZZ(S)=\ZZ/3, \ZZ/2$.
 This completes the proof of (4).
}

 \medskip
 
 (5) 
 By what we have seen so far, it suffices to concentrate on case (ii):  
 here $B$ has genus $1$, the local monodromies of $ C \ra C/G = B$
are just translations (observe that  the branch locus of  $ C \ra C/G = B$ is nonempty  because we want that $C$ has genus $\geq 2$),
and our automorphism is induced by $ (\psi_1, \psi_2)$,
where $\psi_1$ is the identity,  $G=Z(G)$ and $ \psi_2 \in Z(G)$ 
is not a translation.
Moreover, 
 $q(S)=1, p_g(S)=0$,
and $ b_2(S) = 2$. Hence 
 the two fibre images $E, C$
of the horizontal and vertical curves 
in $C \times E$ span a lattice $\sL$ of rank two, on which the group of automorphisms of $S$  acts trivially.

The next lemma shows that, in case where $T$ is cyclic, we may just focus on showing that the action of $\psi$ 
is  the identity
on $ H_1(S, \ZZ)$.

\begin{lemma}\label{Tcyclic}
Assume that $G = T \rtimes \mu_r$, 
where $T$ is cyclic, that $B$ has genus $1$, that  the local monodromies of $ C \ra C/G = B$ are translations, and there is one local monodromy generating $T$. Then, if an automorphism $\psi$,
 induced by $ (\psi_1, \psi_2)$,
where $\psi_1$ is the identity and $ \psi_2 \in Z(G)$,  acts as the identity
on $ H_1(S, \ZZ)$, it also acts  as the identity on  $ H^2(S, \ZZ)$.
\end{lemma} 
\begin{proof}
Assuming  that $\psi_1$ is the identity 
    implies that 
   the submultiple fibres (the reduced multiple fibres) of the fibration $f :  S \ra B$ are left fixed by $\psi$.
   
Then $\psi$ acts as the identity on the  subgroup
of $H^2(S, \ZZ)$ generated by these submultiple fibres  of $f$ and by  the smooth  fibres $E,C$
of the two fibrations $f,p$ (see \eqref{fibrations}).
We can moreover use Lemma \ref{m=2}
to infer that the submultiple fibres of the fibration $S \ra E/G$ are left fixed,
since the  branching indices are, by Hurwitz' formula,  never of the form  $(2,2, m_3, \dots )$ where $m_j$ is odd for $ j \geq 3$.

It is not true in general  (see Section \ref{2-2}) that these submultiple fibres generate the torsion subgroup, but by our assumption  
 the action on the torsion subgroup of  $H^2(S, \ZZ)$ is the identity.
 
It suffices to exhibit two divisor classes with self intersection zero which are left fixed by $Z(G)$, and such that their intersection
product equals $1$: because then their image in  the quotient group $H^2(S, \ZZ)/ H^2(S, \ZZ)_\tor$ yields  a basis,
hence the  two divisor classes and the torsion classes yield a system of generators for $H^2(S, \ZZ)$ which are left fixed by $Z(G)$.

These two divisor classes are found as follows.

By assumption, there is a fibre of $f$ which is multiple of order $|T|$, which we write as $|T| M_1$.

Now, the fibration $ p : S \ra E/G$ is induced by  $C \times E \ra E \ra E/G$, and we observe
 that there is a fixed point $y$ in $E$ for
$\mu_r$, hence there is a multiple fibre of $p$ occurring  with multiplicity $r$, that we write  as $ r M_2$ (with $M_2$  isomorphic to 
$C / \mu_r$).

Now, denoting by $\pi : C \times E \ra S$ the projection, we have:
\[
M_1 \cdot M_2 = \frac{1} {|T| r } E \cdot C = \frac{1}{|G|} E \cdot C
\]
and
\[
|G| E \cdot C = \pi^* (E) \cdot \pi^* (C) = |G|^2
\Rightarrow M_1 \cdot M_2 = 1 .
\]

Hence we have  proven  that $\psi$ acts as the identity 
 on a system of generators of $H^2(S, \ZZ)$, so it acts as the identity on 
 the full cohomology group.
\end{proof}

We conclude now the proof of step (5) using  the fact that the monodromies of  the covering $C \ra B$ in these cases
 simplify to the
four   minimal big cases, or to the case of Proposition \ref{2x2}; we shall use  this proposition and   the computer assisted result Theorem \ref{Magma}  which is shown in the appendix:
 implying that in the  four   minimal big cases with $G = Z(G)$ of  order $\geq 8$,
 and in the case of $h=1$, and $G = \ZZ/2 \times \mu_2$, the group   $\Aut_\ZZ(S)$ is trivial except
 for  case (I-1) of $ G= \ZZ/2 \times \mu_4$, where  $\Aut_\ZZ(S) \cong \ZZ/2$.

 The condition that $\Aut_\ZZ(S)$ is trivial is equivalent, in view of Criterion  \ref{tips},   to the fact that,
  for any  element  $g \in Z(G)$,  the commutators of the lifts to $\pi^{orb}$
 of a minimal system of  generators of $G$ and of  $g$ are  all  nontrivial in $H_1(C, \ZZ)_G$. 
  
  Now,   for the   four    minimal big cases, with respective groups

 \[
 \quad  \ZZ/3 \times \mu_3, \quad  \ZZ/2 \times \mu_4,
\quad (\ZZ/2)^2 \times \mu_2, \quad (\ZZ/2)^2 \times \mu_2,
\]
 the monodromy is  in normal form and then 
 the lifts of the generators are respectively 
 $$ \ (\al_1, \ga_1), \
 (\al_1, \ga_1), \ (\al_1, \be_1, \ga_1), \ (\al_1, \ga_1, \ga_2).$$

 An immediate extension of Lemma \ref{simplify} shows that under the simplification process 
 $H_1(C, \ZZ) = H_1(C^0, \ZZ) \oplus W$ is a $G$-equivariant splitting, therefore we have
 equality $H_1(C, \ZZ)_G = H_1(C^0, \ZZ)_G \oplus W_G$.
 
 Therefore, if a commutator is not zero in $H_1(C^0, \ZZ)_G $, then it is also nonzero inside
 $H_1(C, \ZZ)_G $.
 
 Because by simplification every monodromy can be carried to a minimal one, 
 statement (5) follows now from Theorem \ref{Magma} 
 and Proposition \ref{2x2}.

 \medskip

(6) The final statement follows by combining (3), (4) and (5).
 \end{proof}
 
 \medskip
 
 \begin{remark}
 Concerning  the case where the genus $h$ of $B$ is $ \geq 2$,
  or  more generally where we have a subgroup $H$ of $Z(G)$ consisting only of translations:
  
  (i) If $h \geq 2$ we can prove that the subgroups in the list (3) of Theorem \ref{lem: act trivial 1} are exactly the groups of numerically trivial automorphisms.
  
  (ii) If we have a subgroup $H$ of $Z(G)$ consisting only of translations, then $H$ acts trivially on $\pi^{orb} \times \Lam_T$, and we must see what happens for $\mu_r$, using Criterion \ref{tips}.  
 \end{remark}



\subsection{Proof of Theorem \ref{thm}:}
Theorem \ref{thm} follows immediately from Theorem \ref{pseudo-elliptic} and Theorem \ref{thm2}.

\subsection{The main example}\label{main-ex}

We show  here that  the case where $G=\ZZ_3\times \mu_3$, $h=2, k=0$, and the monodromies are of type (I-2) 
leads to $ \Aut_\ZZ(S) \cong \ZZ/3$.

Here, if  $t$ is a generator of $T$, the monodromies are:
$$ \al_1 \mapsto \e,  \be_1 \mapsto t , \  Mon (\al_2) = Mon (\be_2) \ {\rm is \
 trivial}.$$
 
 In this case we have that the orbifold fundamental group $\pi'$ on the side of the elliptic curve $E$
 is such that $$ \pi' \supset \pi_1(E) = \Lambda = \ZZ e_1 \oplus \ZZ \om,$$
$\pi'$ is generated by $e_1 : = 1 \in \CC$, by $\e \in \mu_3$ and by $\tau : = \frac{1-\om}{3}$.

Here $t$ is the class of $\tau$ modulo $\Lambda$, $\om = e_1 - 3 \tau$, and 
conjugation by $\e$ sends
$$ e_1 \mapsto e_1^{\e} = \om = e_1 - 3 \tau, \ \ \tau  \mapsto \tau^{\e} = \frac{\om - \om^2}{3} = 
 \frac{2 \om +e_1}{3} = e_1 - 2 \tau.$$ 
 
 Since $S \ra B$ is a fibre bundle with fibre $E$, we have an exact sequence
 $$ 1 \ra \Lambda \ra \Ga \ra \pi_1(B) \ra 1,$$
 hence $\Ga \subset \pi_1(B) \times \pi'$ is generated by $\Lambda = \ZZ e_1 \oplus \ZZ \om$
 and by lifts of the generators of $\pi_1(B)$, namely
 
 $$ \tilde{\al_1} : = (\al_1, \e) , \ \tilde{\be_1} : = (\be_1, \tau) ,\ \al_2 = (\al_2, 0), \be_2 = (\be_2, 0).$$
 
Clearly $\al_2, \be_2, \tilde{\be_1}$ centralize $\Lambda$, while

$$  e_1^{\tilde{\al_1}} = e_1^{\e} = \om ,  \ \om ^{\tilde{\al_1}} =  \om^2 = -e_1 - \om.$$ 

Finally, 
$$ [ \tilde{\al_1}, \tilde{\be_1}] [\al_2, \be_2] = (1, [\e, \tau] )= (1, \e \tau \e^{-1} \tau^{-1} )= (1, e_1 - 3 \tau) = (1, \om).$$

Because of the relations $\om \sim  0, e_1  \sim \om, \om \sim -e_1 - \om$,
the image of $\Lambda$ inside $H_1(S, \ZZ)$ is zero, hence $H_1(S, \ZZ) \cong H_1(B, \ZZ) = \ZZ^4$
is torsion free and we conclude that $\Aut_\QQ(S)= \Aut_\ZZ(S)$.

Then since $h=2$, $\Aut_\QQ(S)= T$, hence $ \Aut_\ZZ(S) = \ZZ/3$.

\subsection{A relative of the main example, with $ \Aut_\ZZ(S) = \ZZ/2$}\label{main-ex-rel}
We assume now that $h=2$, $k=0$, $T \cong (\ZZ/ 2) t$, $r= 4$ and we have a monodromy of type (I-2), i.e.
$$ \al_1 \mapsto \e,  \be_1 \mapsto t , \  Mon (\al_2) = Mon (\be_2) \ {\rm is \
 trivial}.$$
 
 Here $\pi'$ is generated by $e_1 : = 1 \in \CC$, by $\e \in \mu_4$ and by $\tau : = \frac{e_1+ e_2}{2}$,
 where $e_2 = i$ since $r=4$.
 
 Again we have a fibre bundle and if we show that the image of $\Lambda$ is zero in $H_1(S, \ZZ)$, then again there is no torsion and $ \Aut_\ZZ(S) = \ZZ/2$.
 
 Again 
 $$ [ \tilde{\al_1}, \tilde{\be_1}] [\al_2, \be_2] = (1, [\e, \tau] )= (1, - e_1).  $$
 
Hence  we get $ e_1 \sim e_2,  e_1 =0$ and we are done.

\subsection{Examples using   the Reidemeister Schreier method}
\label{2-2}

In this section we analyse whether, 
{in the case $h=1$ and  $G=\ZZ/2\times \mu_2$, the automorphisms in $Z(G)$
are cohomologically trivial. }
 
 In order to do so, we need to use the Reidemeister-Schreier method, which, given a group presentation (here: for  $\pi^{orb}$)
with generators and relations, allows to find a presentation for a subgroup (here: $\pi_1(C)$), (see \cite{mks}, pages 86-95, \cite{l-s} pages 102-104)

\begin{proposition}\label{2x2}
 Assume that  $ G \cong \ZZ/2 \times \mu_2$: then  $\Aut_\ZZ(S)$ is trivial 
\begin{itemize}
\item
 for the  $G$-covering $C$ of an elliptic curve $B$  with minimal monodromy of type (I-1),
 and 
 \item
 more generally if $h=1$.
 \end{itemize}

\end{proposition}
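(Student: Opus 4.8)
The plan is to exploit that $G=\ZZ/2\times\mu_2$ is abelian, so $G=Z(G)$, with $T=\ZZ/2$ cyclic and $r=2$; by the general inclusion $\Aut_\ZZ(S)\subseteq Z(G)$ (as in Theorem \ref{lem: act trivial 1}(2)) it then suffices to show that none of the three nontrivial elements $t,\ \e,\ t\e\in Z(G)$ induces a cohomologically trivial automorphism. First I would settle the reduction to a single standard model. For $T=\ZZ/2$ cyclic the local monodromies of $C\ra B$ all equal the generator $t$, and their number $k$ is even, so we are necessarily in case (I-1); by the simplification process of Definition \ref{def:min-mon} together with Lemma \ref{simplify} every such monodromy with $h=1$ reduces to the minimal case $k=2$. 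Moreover, the $G$-equivariant refinement of Lemma \ref{simplify} used in the proof of Theorem \ref{lem: act trivial 1}(5) splits $H_1(C,\ZZ)$ as a $\ZZ[G]$-module with the minimal curve $C^0$ a direct summand, so nonvanishing of the relevant commutator class in $H_1(C^0,\ZZ)_G$ forces nonvanishing in $H_1(C,\ZZ)_G$. Thus it is enough to treat the minimal model
\[
\pi^{orb}=\langle \al_1,\be_1,\ga_1,\ga_2\mid [\al_1,\be_1]\ga_1\ga_2=1,\ \ga_1^2=\ga_2^2=1\rangle,\quad \al_1\mapsto\e,\ \be_1\mapsto 0,\ \ga_1,\ga_2\mapsto t.
\]

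Next I would reduce the cohomological question to a single commutator class. Since $T$ is cyclic and the local monodromy $t$ generates $T$, Lemma \ref{Tcyclic} reduces cohomological triviality to triviality of the action on $H_1(S,\ZZ)$. Writing $\psi=(\psi_1,\psi_2)$ with $\psi_1=\id$ and $\psi_2\in Z(G)$, the formulae \eqref{crucial} and \eqref{comm} show that the translation parts of the $\Ga$-commutators of the lifts $\al_1,\ga_1$ of the two generators $\e,t$ of $G$ are $\pm 2t\in\Lambda$; hence, by Facts \ref{facts}(2) and Criterion \ref{tips}(iii)--(iv), each of $t$, $\e$ and $t\e$ is cohomologically trivial only if the commutator $[\al_1,\ga_1]$ of the lifts to $\pi^{orb}$ of $\e$ and $t$ vanishes in $H_1(C,\ZZ)_G$. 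Consequently the whole proposition follows once I establish
\[
[\al_1,\ga_1]\neq 0 \quad\text{in}\quad H_1(C,\ZZ)_G .
\]

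The core computation, and the step I expect to be the main obstacle, is to verify this nonvanishing by the Reidemeister--Schreier method. I would take the Schreier transversal $\{1,\al_1,\ga_1,\al_1\ga_1\}$ for the index-$4$ subgroup $\pi_C=\Ker(\pi^{orb}\twoheadrightarrow G)$, read off an explicit free generating set of $\pi_C$ (whose rank matches $H_1(C,\ZZ)\cong\ZZ^6$, since $C$ has genus $3$ by Riemann--Hurwitz) together with the induced $G$-action, and rewrite $[\al_1,\ga_1]$ as a word in these generators. Abelianizing and passing to the coinvariants $H_1(C,\ZZ)_G$, I would check that the resulting class is nonzero. The delicate points are bookkeeping the $\ZZ[G]$-module structure of $H_1(C,\ZZ)$ correctly, including the signs introduced by the $\e=-1$ action, and confirming that the class genuinely survives in the coinvariant quotient rather than being absorbed into a relation.

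Finally I would conclude as follows. Since $[\al_1,\ga_1]\neq 0$ in $H_1(C,\ZZ)_G$, none of $t$, $\e$, $t\e$ acts trivially on $H_1(S,\ZZ)$, so by Lemma \ref{Tcyclic} none is cohomologically trivial; as $\Aut_\ZZ(S)\subseteq Z(G)=\{1,t,\e,t\e\}$, this gives $\Aut_\ZZ(S)=\{1\}$ for the minimal covering of type (I-1). The equivariant reduction of the first paragraph then transports the same nonvanishing to an arbitrary $G$-covering with $h=1$, establishing both assertions of the proposition.
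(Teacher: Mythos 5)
Your outline follows the paper's own route: the paper likewise reduces the general $h=1$ case to the minimal type (I-1) covering (two branch points, presentation $\ga_1^2=([\al,\be]\ga_1)^2=1$), runs Reidemeister--Schreier on $\pi_C\subset\pi^{orb}$ with exactly your transversal $\{1,\al,\ga_1,\al\ga_1\}$, and analyzes the single commutator of the lifts of $\e$ and $t$. The difference is that you stop where the paper's proof begins: the step you defer (``I would check that the resulting class is nonzero'') \emph{is} the proposition, and nothing is proved until it is carried out. For the record, the computation yields seven Schreier generators $\be,\ \al\be\al^{-1},\ \ga_1\be\ga_1,\ \al\ga_1\be\ga_1\al^{-1},\ \al^2,\ \ga_1\al^2\ga_1,\ \al\ga_1\al\ga_1$, one relation permitting the elimination of one of the first four, and then $H_1(C,\ZZ)_G=\ZZ b\oplus\ZZ x\oplus(\ZZ/2)(x-y)$, where $b$ is the common image of the conjugates of $\be$, $x$ that of $\al^2\sim\ga_1\al^2\ga_1$, $y$ that of $\al\ga_1\al\ga_1$, and the relation $2(x-y)=0$ comes from conjugating the last generator; the class of $[\al,\ga_1]$ is $\pm(y-x)$, nonzero and of order exactly $2$. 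The paper in fact goes one step further than your plan and assembles the full group $H_1(S,\ZZ)=\ZZ b\oplus\ZZ\xi\oplus(\ZZ/2)\eta_2\oplus(\ZZ/4)\eta$, reading off directly that $\e$ sends $\eta\mapsto-\eta$ and $t$ sends $\xi\mapsto\xi+2\eta$; your criterion-based shortcut avoids this assembly but is otherwise the same computation.

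Two logical points in your reduction also need repair. First, Lemma \ref{Tcyclic} is cited in the wrong direction: it asserts that triviality on $H_1(S,\ZZ)$ implies triviality on $H^2(S,\ZZ)$, which is what one uses to \emph{prove} cohomological triviality, not to refute it. The implication you actually need --- cohomologically trivial $\Rightarrow$ trivial on $H_1(S,\ZZ)$ --- comes from Poincar\'e duality, $H^3(S,\ZZ)\cong H_1(S,\ZZ)$ equivariantly (Remark \ref{poincare}); Lemma \ref{Tcyclic} plays no role in your argument. Second, for the translation $t$ the exact condition for triviality on $H_1(S,\ZZ)$ is that $2\eta=0$ there, equivalently that some \emph{odd} multiple of $[\al,\ga_1]$ vanishes in $H_1(C,\ZZ)_G$: the kernel of $H_1(C,\ZZ)_G\oplus\Lambda_G\to H_1(S,\ZZ)$ is generated by the mixed element $([\al,\ga_1],-2\eta_1)$, not by its two components separately, so mere nonvanishing of $[\al,\ga_1]$ is a priori not enough (a class of odd order would still allow $t$ to act trivially). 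This is harmless here precisely because the computation shows the class has order $2$, but your write-up must record the order, not just the nonvanishing. With the computation executed and these two points fixed, your argument closes and coincides in substance with the paper's proof.
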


\begin{proof}
 Here we have two branch points  and the orbifold fundamental group is generated 
by $ \al, \be, \ga_1$ subject to the relations 
$$\ga_1^2= ([ \al, \be] \ga_1)^2 = 1$$

 To study the action on  $H_1(S, \ZZ)$ we use  a presentation of
$\Ga$ obtained using the Reidemeister-Schreier method.

Let 
$$\xi : = (\al, \e),\, \eta : = (\ga_1, \eta_1) \Rightarrow \xi^2 = \al^2 \in \pi_1(C),\, \eta^2 = 2 \eta_1 \in \Lambda.$$

Recall the exact sequence
$$ H_1(C, \ZZ)_G \times \Lambda_G \ra H_1(S, \ZZ) \ra G = (\ZZ/2)^2 \ra 0 $$
where the kernel of the first homomorphism to the left is generated by the commutator
$$\tau: =  [ (\al, \e),(\ga_1, \eta_1)] = ( \al \ga_1 \al^{-1} \ga_1, - 2 \eta_1).$$

As we already calculated, $\Lambda_G = \Lambda / 2 \Lambda =  (\ZZ/2) 2 \eta_1  \oplus (\ZZ/2)\eta_2$.

$C$ has genus $3$, and the Reidemeister-Schreier method shows that $\pi_1(C)$  is generated by
$$ \be,\, \al \be \al^{-1} ,\, \ga_1 \be \ga_1,\,  \al \ga_1 \be \ga_1 \al^{-1},\, \al^2,\, \ga_1 \al^2 \ga_1,\, \al \ga_1 \al \ga_1 .$$

Denote the above sequence of generators by $b_1, b_2, b_3, b_4, x_5, x_6, y_7$: these are too many, but it is immediate
to verify that
$$ b_2 b_1^{-1} x_6 y_7^{-1} b_4 y_7 x_6^{-1}  b_3^{-1}  =( \al \be \al ^{-1}  \be ^{-1} \ga_1)^2 = 1.$$

Hence any one of the $b_i$'s can be eliminated from the system of generators and we are left with exactly six generators, whose image in
$H_1(C, \ZZ)$ will be a free $\ZZ$-basis.

Denoting their image in $H_1(C, \ZZ)$ by the same symbol  and writing $a \sim b$ if two elements have the same image in 
$H_1(C, \ZZ)_G$, we see  that, conjugating by  $\xi, \eta$ the first six generators, we get exactly the relations
$$ \be \sim \al \be \al^{-1} \sim \ga_1 \be \ga_1 \sim \al \ga_1 \be \ga_1 \al^{-1},  \al^2 \sim \ga_1 \al^2 \ga_1$$
and we denote by $b$ the image of the first four  generators, by $x$ the image of the fifth and sixth,
and by $y$ the image of $ \al \ga_1 \al \ga_1$.

 Moreover, conjugating the last generator by $^{-1} \eta$ we get exactly  the relations
$$ \al \ga_1 \al \ga_1 \sim \ga_1 \al \ga_1 \al = \ga_1 \al^2 \ga_1 ( \al \ga_1 \al \ga_1)^{-1} \al^2 ,$$ 
hence $y = 2x - y$, so that $ 2 (x-y)=0$. 

We also observe that $ \al \ga_1 \al^{-1} \ga_1 =  \al \ga_1 \al \ga_1 \ga_1 \al^{-2} \ga_1$,
hence the image of $\tau$ equals $ (y-x, - 2 \eta_1) = ( y-x, 0)  - 2 \eta$.

Putting everything together, we get that $H_1(S, \ZZ)$ is generated by
$$ \xi,\, b,\, \eta,\, \eta_2,\, y,$$
with relations   (since $ x = 2 \xi$):

$$ 4 \xi =  2y,\, (2 \xi -y)  =  - 2 \eta , \, 4 \eta=0,\, 2 \eta_2 =0,$$
where however  the first relation is implied by the others.

Hence, eliminating $y$, we get:
 $$H_1(S, \ZZ) = \ZZ b\oplus 
 \ZZ \xi \oplus (\ZZ/2) \eta_2 \oplus (\ZZ/4) \eta.$$
 This group is larger than $H^{orb}$, and the action of $\psi$, if $\psi_2 = -1$,
  equals  the identity on $\xi, b, \eta_2$,
 but not on $\eta$, which is sent to $-\eta \neq \eta$.

 Similarly if $\psi_2 (z) = z + \eta_1$, see the calculations of Criterion \ref{tips}, the action is not the identity since
 $\om = 2   \eta_1 = 2 \eta \neq 0$ inside $H_1(S, \ZZ)$, 
 hence the  translation by $\eta_1$ sends $\xi$ to $\xi + 2 \eta$, and leaves $\eta$ fixed.
 
 Hence, not only these two elements do not belong to $\Aut_{\ZZ}(S)$,
but also  their product $g$ does not, since it does not keep fixed $\xi$ and $\eta$.

 For the second assertion, it suffices to observe that for $h=1$ the monodromy simplifies to
  a minimal monodromy of type (I-1).
\end{proof}

\begin{remark}\label{3,3}
One can ask whether similarly   this is not the case  also for  $ G \cong \ZZ/3 \times \mu_3$,
and for the  $G$-covering $C$ of an elliptic curve $B$ with minimal monodromy of type (I-1).

We have  here $r=3$, $ G = T \times \mu_3$, where $T$ is generated by $\eta_1 = \frac{1-\e}{3}$.
 
 $T$ being cyclic, we take  only two branch points, and we take the following two elements of $\Ga$
 mapping onto a set of generators for $\De_G$:
 $$ (\al, \e), (\ga_1, \eta_1).$$
 
We  use now the criterion given in Criterion \ref{tips}.
 
 We have that $\psi (x,z) = (x, \la z + c)$, where $\la^3 =1$ and $c$ is a multiple of $\eta_1$.
 
 Hence the  generators are left invariant by $\psi$ if and only if
 $$ \la^{-1} (\e-1) c, (1- \la^{-1}) \eta_1$$ 
 are in the commutator subgroup of $\Ga$: for this a sufficient condition would be 
that  they map to zero in $\Lam_G$.
 
 This holds for all $\la, c$ if and only if $ (\e-1) \eta_1 = 0 \in [\Ga, \Ga]$.
 
 Since  $ (\e-1) \eta_1 = - \frac{1}{3} (\e-1)^2 = \e$, which is the second period $e_2$ of $\Lam$,
 we conclude that we do not have zero in $\Lam_G$ because
 $$ \Lam_G  = (\Lam)_{\mu_3} = \langle 1, \e  \rangle /  \langle \e -1, \e- \e^2  \rangle \cong (\ZZ/3) \e.$$
 
However, in calculating the image of $ \Lam_G $ inside $\Ga^{ab}$, we must divide by the subgroup
generated by the commutator 
$$ [(\al, \e), (\ga_1, \eta_1)] = (\al \ga_1 \al^{-1} \ga_1^{-1} , (\e-1) \eta_1) =   (\al \ga_1 \al^{-1} \ga_1^{-1} , \e),$$
 hence $(0, \e)$ is  in  $[\Ga, \Ga]$ if and only if  $\al \ga_1 \al^{-1} \ga_1^{-1}$ is
 trivial in $H_1(C, \ZZ)_G$, as predicted by Criterion \ref{tips}.

  The answer is again negative, as shown in the appendix, and is found again via the Reidemeister Schreier method. 
 Since  the calculations here are quite complicated, it was necessary to use  MAGMA   for computer calculations (see Theorem \ref{Magma}).

\end{remark}

 \subsection{The group of homotopically trivial automorphisms of properly elliptic surfaces with $\chi(\sO_S)=0$ 
 is  just $\Aut^0(S)$.}

\begin{prop}\label{homotopic}
Let  $S$ be a smooth projective surface with $\kappa(S)=1$ and $\chi(\sO_S)=0$. Then $\Aut_\#(S)=\Aut^0(S)$.
\end{prop}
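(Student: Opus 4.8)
The plan is to exploit that a minimal $S$ in this class is \emph{aspherical}: its universal cover is $\HH\times\CC$, which is contractible, so $S$ is a $K(\Ga,1)$ with $\Ga=\pi_1(S)$. (For a non-minimal $S$ I would first pass to the minimal model, which has the same $\kappa$ and $\chi(\sO_S)$ and the same $\pi_1$; the minimal case, where the structure theory of Section~\ref{s:surfs} applies, is the essential one.) Since $S$ is a $K(\Ga,1)$, the standard fact that free homotopy classes of self-maps of an aspherical space are classified by $\Hom(\Ga,\Ga)$ up to conjugacy gives: an automorphism $\psi$ is homotopic to $\id_S$ if and only if $\psi_*$ is an inner automorphism of $\Ga$. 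Equivalently, under the identification $\Aut(S)=\widetilde\sN_\Ga/\Ga$ of \eqref{univ-cover}, the subgroup $\Aut_\#(S)$ is the kernel of the conjugation map $\Aut(S)\to\Out(\Ga)$; concretely $\psi\in\Aut_\#(S)$ iff some lift of $\psi$ lies in the centralizer $Z$ of $\Ga$ inside $\Aut(\HH\times\CC)$, so that
\[
\Aut_\#(S)= Z\Ga/\Ga \cong Z/(Z\cap\Ga).
\]
Note that $\Aut^0(S)\subseteq\Aut_\#(S)$ is automatic, since a path in the Lie group $\Aut(S)$ from $\id$ to $g$ is an isotopy; so it remains to compute $Z$ and show its image is exactly $\Aut^0(S)$.

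The second step is to compute $Z$ using the triangular form of lifts. Every element of $\widetilde\sN_\Ga$ (hence of $Z$) has, by Liouville's theorem as in the derivation of \eqref{product-aut}, the form $(t,z)\mapsto(\psi_1(t),\la z+\phi(t))$. If such a map centralizes $\Ga$, comparing first coordinates shows $\psi_1$ centralizes the image of $\Ga$ in $\Aut(\HH)=\mathrm{PSL}_2(\RR)$. That image is the cocompact Fuchsian group $\pi^{orb}=\pi_1^{orb}(C/G)$ (here $B=C/G$ is a hyperbolic orbifold because $C$ has genus $\geq 2$), which is non-elementary, so its centralizer in $\mathrm{PSL}_2(\RR)$ is trivial and $\psi_1=\id$. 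Commuting with the translation subgroup $\pi_E=\Lam\subset\Ga$ then forces $\la=1$, and commuting with the remaining generators forces $\phi$ to be $\pi^{orb}$-invariant, hence a holomorphic function on the compact base, hence constant. Therefore $Z$ consists of the fibrewise constant translations $(t,z)\mapsto(t,z+\beta)$ with $\beta\in\CC$ subject to compatibility with the $G$-action.

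The third step splits into the two cases of Lemma~\ref{lem: k1 str}. In the pseudo-elliptic case (I), $G$ acts on $E$ by translations, so in the notation $g(z)=\e z+b$ one has $\e=1$ for every $g$; then every constant $\beta$ commutes with $\Ga$, giving $Z\cong\CC$ and $Z\cap\Ga=\Lam$, whence $\Aut_\#(S)\cong\CC/\Lam=E=\Aut^0(S)$ by Proposition~\ref{G-translates}. In case (II), $G$ does not act only by translations, so $\Ga$ contains an element whose fibre part has $\e\neq 1$; conjugating the constant translation $\beta$ by it changes the translation part by $(\e-1)\beta$, exactly the mechanism of \eqref{crucial}, and exact commutation forces $(\e-1)\beta=0$, i.e.\ $\beta=0$. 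Hence $Z=\{\id\}$ and $\Aut_\#(S)=\{\id\}=\Aut^0(S)$, since $\Aut^0(S)$ is trivial in case (II). Combining the two cases yields $\Aut_\#(S)=\Aut^0(S)$.

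The hard part will be the rigidity of the base direction in the second step: the point that any homotopically trivial automorphism lifts to a map with trivial $\HH$-component, which is what collapses the whole problem to a one-line commutation check in the fibre $\CC$. This is also where the explicit computations of $\Aut_\ZZ(S)$ are seen to be consistent: the exceptional involution of Theorem~\ref{pseudo-elliptic} acts as $\iota\neq\id$ on $C/G$, while by Theorem~\ref{lem: act trivial 1} every nontrivial element of $\Aut_\ZZ(S)$ in case (II) either moves $C/G$ or has non-translation fibre part; in each case $\psi_*$ fails to be inner on $\Ga$, confirming that these automorphisms are cohomologically but not homotopically trivial.
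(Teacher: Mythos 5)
Your treatment of the \emph{minimal} case is correct, and it is a genuinely different route from the paper's. The paper argues analytically: by Hartman's theorem \cite{Har67} the homotopy from $\id_S$ to $\sigma$ can be taken through harmonic maps whose tracks are geodesics of constant length for the natural nonpositively curved K\"ahler metric; since $\sigma$ preserves the fibres of $f\colon S\to B$, these geodesics lie in the flat fibres, the homotopy lifts to $C\times E$ as a family of fibrewise translations and is therefore holomorphic, so $\sigma\in\Aut^0(S)$. You instead use asphericity: $\Aut_\#(S)$ is the kernel of $\Aut(S)\to\Out(\Ga)$, and you compute the centralizer $Z$ of $\Ga$ in $\Aut(\HH\times\CC)$ directly (triangular form by Liouville; $\psi_1=\id$ because a non-elementary Fuchsian group has trivial centralizer in $\mathrm{PSL}_2(\RR)$; $\la=1$ from $\pi_E$; $\phi$ constant; then $(\e_\ga-1)\beta=0$). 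This is clean, purely group-theoretic, avoids harmonic-map theory, and makes the dichotomy (I)/(II) transparent. One small imprecision: commutation with a general $\ga\in\Ga$ gives the twisted relation $\phi(\ga_1 t)=\e_\ga\phi(t)$, not $\pi^{orb}$-invariance; but invariance under $\pi_C$ (where $\e_\ga=1$) already forces $\phi$ constant, and the twisted relation is exactly the condition $(\e_\ga-1)\beta=0$ you exploit afterwards, so the substance is right.

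The genuine gap is the \emph{non-minimal} case, which your parenthetical remark does not cover and which cannot be reduced to the minimal case by passing to the minimal model. First, a non-minimal $S$ is not aspherical (the exceptional curves are spheres), so the $K(\Ga,1)$ classification of homotopy classes is unavailable on $S$ itself. Second, descent only shows that $\sigma_{\min,*}=\sigma_*$ is inner, hence by your minimal case $\sigma_{\min}\in\Aut^0(S_{\min})$; in case (II) this yields $\sigma=\id$, but in case (I) it only yields $\sigma_{\min}=\tau_w$, and $w\neq0$ is genuinely possible: if $g\in G$ fixes a point $x\in C$ (i.e.\ $C\to C/G$ ramifies) and $w$ is the translation by which $g$ acts on $E$, then $\tau_w$ fixes $q=[(x,z)]$ and lifts to an automorphism $\sigma\neq\id$ of the blow-up at $q$ which acts trivially on $H^*(S,\ZZ)$ and induces an \emph{inner} automorphism of $\pi_1(S)=\pi_1(S_{\min})$ --- these are exactly the examples of Theorem 4.1 of \cite{CatLiu21} recalled in the introduction. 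So for non-minimal $S$, ``$\sigma_*$ inner'' plus cohomological triviality does not imply $\sigma\simeq\id_S$, and one needs an obstruction finer than $\pi_1$ and $H^*$ to show such $\sigma$ are not homotopically trivial; note also that $\Aut^0(S)$ is trivial for these blow-ups, so the statement to be proven on $S$ is not the one you proved on $S_{\min}$. This is precisely where the paper's proof does real work: $e(S)>0$, so by Lefschetz any $\sigma\in\Aut_\#(S)$ has a fixed point, and then the nonpositively curved metric on $S_{\min}$ together with the rigidity argument of \cite[Corollary 0.2]{Liu18} forces $\sigma=\id_S$. You would need to import this (or an equivalent obstruction) to complete your proof.
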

\begin{proof}
If $S$ is not minimal then $e(S)>0$ and hence any $\sigma\in \Aut_\#(S)$ has fixed points. Using the natural K\"ahler metric with nonpositive sectional curvature on the minimal model $S_{\min}$ of $S$, one sees that $\sigma = \id_S$ as in the proof of \cite[Corollary 0.2]{Liu18}.

Now we may assume that $S$ is minimal. Again, using  the natural K\"ahler metric with nonpositive sectional curvature on $S$, $\sigma$ is smoothly homotopic to $\id_S$ through harmonic maps $\sigma_s\colon S\rightarrow S$ with $\sigma_0=\id_S$ and $\sigma_1=\sigma$ by \cite{Har67}. Moreover, for any $p\in S$, the path $s\mapsto \sigma_s(p)$ is a geodesic segment (parametrized proportionally to arc length) with length independent of $p\in S$. The vector field corresponding to the homotopy $\sigma_s$ lifts to $C\times E$ since $\pi\colon C\times E\rightarrow S$ is \'etale. It follows that $\sigma_s$ lifts to $\tilde\sigma_s \colon C\times E\rightarrow C\times E$ for $0\leq s\leq 1$. For any $p\in S$, since $\sigma$ preserves each fibre of $f\colon S\rightarrow B$,  the geodesic $\sigma_s(p)$ from $p$ to $\sigma(p)$ lies in the fibre of $f$ containing $p$. Now one sees that 
\[
\tilde \sigma_s(\tilde b, x) = (\tilde b, x+t_s(\tilde b))
\]
for some morphism $t_s\colon C\rightarrow E$. Thus $\tilde \sigma_s$ and hence $\sigma_s$ is holomorphic for each $0\leq s\leq 1$. It follows that $\sigma\in \Aut^0(S)$.
\end{proof}

\appendix

\section{}

\subsection{$G$ abelian }
Let us consider first the case where  $G=T\times \mu_r$ is  an abelian group.

We consider the following List I, encompassing 20 cases, corresponding to  all  minimal monodromies 
for $G$ abelian (see Lemma \ref{Mon} and Definition \ref{def:min-mon}). Here, 
$\epsilon$ is a generator of $\mu_r$ and 
the type of the monodromy $(\alpha_1,\beta_1,\ldots, \alpha_h,\beta_h; \gamma_1,\ldots, \gamma_k)$ is defined as the sequence $(h; m_1, \dots, m_k)$, where the $m_j$' s are
the orders of the local monodromies $t_j : = Mon (\ga_j)$.

\begin{enumerate}
    \item $T=(\mathbb Z/3) t$, $r=3$, and the image of the monodromies are:
        \begin{itemize}
            \item[(I-1)] $(\epsilon, 0; t, 2t)$, type: (1;3,3) BIG
            \item[(I-2)] $(\epsilon, t, 0,0)$,  type: (2;-)

            \item[(II-2)] $(\epsilon, 0, t,0)$,  type: (2;-)

            \item[(II-3)] $(\epsilon, t, t,0)$  or $(\epsilon, t, 2t,0)$,    type: (2;-)
        \end{itemize}
        
    \item $T=(\mathbb Z/2) t$, $r=4$, and the image of the monodromies are:
        \begin{itemize}
            \item[(I-1)] $(\epsilon, 0; t,t)$, type: (1;2,2) BIG
            \item[(I-2)] $(\epsilon, t, 0,0)$,  type: (2;-) 
             \item[(II-2)] $(\epsilon, 0, t,0)$,  type: (2;-)

            \item[(II-3)] $(\epsilon, t, t,0)$,   type: (2;-)
        \end{itemize}
        
    \item $T=(\mathbb Z/2) t \oplus (\mathbb Z/2) s$, $r=2$, and the image of the monodromies are:
        \begin{itemize}
            \item[(I-1)] $(\epsilon, 0; t,s,t +s)$,  type: (1;2,2,2) BIG
            \item[(I-3)] $(\epsilon, t; s,s)$, type: (1;2,2) BIG
            \item[(II)] $(\epsilon, t, s,0)$ or $(\epsilon, t, s,t)$, type: (2;-)
            \item[(II-1)] $(\epsilon, 0, t,s)$, type: (2;-)
            \item[(III)] $(\epsilon, t, t,0;s,s)$ or 
            $(\epsilon, 0, t,0;s,s)$, type: (2;2,2)
        \end{itemize}
        
    \item $T=(\mathbb Z/2) t$, $r=2$, and 
    image of the monodromies are:
        \begin{itemize}
            \item[(I-1)] $(\epsilon, 0; t,t)$,  type: (1;2,2)
            \item[(I-2)] $(\epsilon, t,0,0)$, type: (2;-) 
             \item[(II-2)] $(\epsilon, 0, t,0)$,  type: (2;-)
            \item[(II-3)] $(\epsilon, t, t,0)$,    type: (2;-)
        \end{itemize}
\end{enumerate}

\begin{theorem}\label{Magma}

Let $G=T\times \mu_r$ be an abelian group, and assume that the monodromy is one of List I. Then the following holds:

\begin{enumerate}

 \item If  $T=(\mathbb Z/3) t$, $r=3$ and  the monodromy 
is 

 i) $(\epsilon, t, 0,0)$ (Case I-2), or 

ii) $(\epsilon, t, a_2 ,0)$ ($a_2\in T\setminus\{0\}$, Case II-3), 

then $\Aut_{\mathbb Z}(S)=\mathbb Z/3$.

    \item If  $T=(\mathbb Z/2) t$, $r=4$ and  the monodromy 
is 

i) $(\epsilon, 0; t,t)$ (Case I-1, big), or 

ii) $(\epsilon, t, 0,0)$ (Case I-2), or

 iii) $(\epsilon, t, t,0)$ (Case II-3), 
 
 then $\Aut_{\mathbb Z}(S)=\mathbb Z/2$.

 \item If  $T=(\mathbb Z/2) t$, $r=2$ and  the monodromy 
is

 i) $(\epsilon, t, 0,0)$ (Case I-2) or 
 
 ii) $(\epsilon, t, t,0)$ (Case II-3) 
 
 then $\Aut_{\mathbb Z}(S)\subseteq \mathbb Z/2$.

\item In  all other cases $\Aut_{\mathbb Z}(S)$ is trivial.

\end{enumerate}

\end{theorem}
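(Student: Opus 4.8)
The plan is to reduce each of the twenty monodromies of List~I to a finite, mechanical check via Criterion~\ref{tips}, and then to perform these checks by machine. By Theorem~\ref{lem: act trivial 1}(2) every element of $\Aut_\ZZ(S)$ has the form $\psi=(\id_C,\psi_2)$ with $\psi_2\in Z(G)$, and Lemma~\ref{G} identifies $Z(G)$ in each case: $\ZZ/3$ for $T=(\ZZ/3)t,\,r=3$; $\ZZ/2$ for $T=(\ZZ/2)t,\,r=4$; $(\ZZ/2)^2$ for $T=(\ZZ/2)t,\,r=2$; and $(\ZZ/2)^2\times\mu_2$ in the non-cyclic case $T=(\ZZ/2)^2,\,r=2$. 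Moreover, for every monodromy of List~I with $h\ge 2$ the second-cohomology computation of Theorem~\ref{lem: act trivial 1}(4) already excludes the non-translation elements of $Z(G)$, so in those cases one is reduced to testing $\psi_2\in Z(G)\cap T$; this is what produces the a priori bound $\Aut_\ZZ(S)\subseteq\ZZ/2$ in statement~(3).

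By Remark~\ref{poincare} it suffices to control the action on $H_1(S,\ZZ)$ and on $H^2(S,\ZZ)$. When $T$ is cyclic --- that is, in cases (1), (2) and (4) of List~I --- Lemma~\ref{Tcyclic} shows that triviality on $H_1(S,\ZZ)$ already forces triviality on $H^2(S,\ZZ)$, so the whole problem lives on $H_1(S,\ZZ)$. By Criterion~\ref{tips} the action of such a $\psi$ on $H_1(S,\ZZ)$ is trivial precisely when the commutators $[\ga'_{j,1},\ga'_{i,1}]$ of the first component of a lift of $\psi_2$ with the first components of lifts to $\pi^{orb}$ of a minimal generating set of $G$ vanish in the coinvariant module $H_1(C,\ZZ)_G$; and when $\psi_2$ is a translation, Criterion~\ref{tips}(iv) reduces this to the single commutator of lifts of $\psi_2$ and the generator $\e$ of $\mu_r$.

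The computational heart is therefore to produce $H_1(C,\ZZ)_G$ explicitly for each monodromy. The idea is to take the standard presentation of $\pi^{orb}$, apply Reidemeister--Schreier rewriting with respect to the transversal $G\cong\pi^{orb}/\pi_C$ to obtain a presentation of $\pi_C=\Ker(Mon)$, abelianize to realise $H_1(C,\ZZ)$ as a $\ZZ[G]$-module, and form $H_1(C,\ZZ)_G=H_1(C,\ZZ)/\langle gv-v\rangle$. The relevant commutator words are rewritten in the Schreier generators and their images in $H_1(C,\ZZ)_G$ tested for vanishing; this is exactly what the accompanying MAGMA script carries out. The outcome is as follows: in cases (1-i,ii) and (2-i,ii,iii) all tested commutators vanish, so every nontrivial admissible $\psi_2$ yields a cohomologically trivial automorphism and $\Aut_\ZZ(S)$ equals $\ZZ/3$, respectively $\ZZ/2$ (using Lemma~\ref{Tcyclic} to pass from $H_1$ to $H^2$); in statement~(3) the non-translation element of $Z(G)$ is excluded by the genus argument above, leaving $\Aut_\ZZ(S)\subseteq\ZZ/2$; and in every remaining case of statement~(4) at least one commutator is nonzero for each nontrivial $\psi_2$, forcing $\Aut_\ZZ(S)=\{\id\}$.

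The main obstacle is exactly this last step. Once $|G|\ge 8$ (all the ``BIG'' cases) the Schreier transversal has $|G|$ elements, so the Reidemeister--Schreier presentation of $\pi_C$ carries of the order of $|G|(2h+k)$ generators with correspondingly many relations, and both the rewriting of the commutator words and the integral linear algebra needed to compute the coinvariants become far too bulky to carry out reliably by hand; this is why the verification is delegated to MAGMA. The one point not covered by the cyclic reduction is the non-cyclic group $(\ZZ/2)^2\times\mu_2$ of List~I (the case $T=(\ZZ/2)^2$), where Lemma~\ref{Tcyclic} is unavailable. But since there the claim is merely triviality of $\Aut_\ZZ(S)$, it is enough to rule out each nontrivial $\psi_2\in Z(G)$: those with a $\mu_2$-component are excluded, when $h\ge 2$, by the genus argument of Theorem~\ref{lem: act trivial 1}(4), and the remaining translation (and $h=1$) candidates by producing a single nonzero commutator class in $H_1(C,\ZZ)_G$, exactly as in the model computation of Proposition~\ref{2x2}.
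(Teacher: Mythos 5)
Your overall framework (reduce to candidates $\psi_2\in Z(G)$ via Theorem \ref{lem: act trivial 1}, test triviality on $H_1(S,\ZZ)$ through Criterion \ref{tips} and Reidemeister--Schreier coinvariant computations in MAGMA, then pass to $H^2$) is the same as the paper's, but two of your reductions are wrong in ways that change the answer. First, since every $G$ in List I is abelian, $Z(G)=G$: it is $\ZZ/3\times\mu_3$ of order $9$ in case (1) and $\ZZ/2\times\mu_4$ of order $8$ in case (2), not $\ZZ/3$ and $\ZZ/2$ as you state. The non-translations can only be discarded when $h\geq 2$; in the $h=1$ big cases they are genuine candidates, and this is not a technicality: in case (2-i) (monodromy $(\epsilon,0;t,t)$, $h=1$) the paper's computation shows that the unique nontrivial element of $\Aut_\ZZ(S)$ is the \emph{non-translation} $\epsilon^2\colon z\mapsto -z$, while the translation $t$ does not even act trivially on $H_1(S,\ZZ)$. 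So your claimed outcome for (2-i) ("every nontrivial admissible $\psi_2$ yields a cohomologically trivial automorphism") is false: with your candidate set you would either conclude that $\Aut_\ZZ(S)$ is trivial or attribute the $\ZZ/2$ to the wrong element.

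Second, Lemma \ref{Tcyclic} does not apply "whenever $T$ is cyclic": it requires $B$ of genus $1$ and a local monodromy generating $T$ (its proof needs the multiple fibre $|T|M_1$), so it is unavailable for all the $h=2$, $k=0$ monodromies (I-2, II-2, II-3), i.e.\ for most of List I, and in particular for all cases of statements (1), (3) and for (2-ii), (2-iii). For those cases the paper's passage from $H_1$-triviality to $\ZZ$-cohomological triviality is entirely different: one computes $\Tors H_1(S,\ZZ)$ by machine (the \texttt{Tors\_H1S} routine); when it vanishes — cases (1-i), (1-ii), (2-ii), (2-iii), cf.\ Section \ref{main-ex} — one gets $\Aut_\ZZ(S)=\Aut_\QQ(S)$, which by the genus argument equals $T$; when it does not vanish — statement (3), where $\Tors H_1(S,\ZZ)=\ZZ/2$ — no such conclusion is possible, and this is exactly why the theorem asserts only $\Aut_\ZZ(S)\subseteq\ZZ/2$ there. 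Your blanket use of Lemma \ref{Tcyclic} both leaves the equalities in (1) and (2-ii), (2-iii) unproven (the lemma you invoke does not cover them) and, taken at face value, would "prove" the equality $\Aut_\ZZ(S)=\ZZ/2$ in statement (3), which the paper deliberately does not claim; the torsion computation that replaces it is an essential ingredient missing from your proposal.
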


\begin{proof}
The MAGMA \cite{MAGMA} script below shows that there are non-trivial elements in $\Aut_{\QQ}(S)$ acting trivially on $H_1(S,\ZZ)$ (cf. Criterion \ref{tips}) only 
only for the  cases listed in (1), (2), (3) and for $G=\ZZ/2\times \mu_4$, Case (II-2).

In case (1-i) indeed, as explained in Subsection \ref{main-ex}, the cohomology of $S$ is torsion free, hence
$\Aut_{\QQ}(S)= \Aut_{\ZZ}(S)$: then we recall that if $h \geq 2$ (see (4) of Theorem \ref{thm2}) then only the translations are elements of $\Aut_{\QQ}(S)$.

The same occurs also for  (1-ii), (2-ii) and (2-iii).

Instead, for  (3-i) and (3-ii), every element of $G$ induces an automorphism acting trivially on $H_1(S,\ZZ)$ (which has torsion) 
but  the non-translations act non-trivially on 
$H^2(S,\mathbb Z)$, so $\Aut_{\mathbb Z}(S) \subseteq T=\mathbb Z/2$.

 Finally, if $T=(\mathbb Z/2) t$, $r=4$ and  the monodromy 
is (I-1) or (II-2) the scripts show that only the element $\epsilon^2:z\mapsto -z$ induces an automorphism acting trivially on $H_1(S,\mathbb Z)$.
In the latter case, we have a non-translation and $h=2$, so this automorphism acts non-trivially on $H^2(S,\ZZ)$.
In the former case, by
 Lemma \ref{Tcyclic} the automorphism acts trivially on 
$H^2(S,\mathbb Z)$ as well, so $\Aut_{\mathbb Z}(S)$ has order 2.
\end{proof}

\subsection{Sporadic case $G=(\mathbb Z/2)^2\rtimes \mu_4$}
Here we consider the case where $G=(\mathbb Z/2)^2\rtimes \mu_4$, and assume that the  monodromy 
images of the geometric basis elements $(\alpha_1,\beta_1,\ldots, \alpha_h,\beta_h; \gamma_1,\ldots, \gamma_k)$
are as follows, where $\epsilon$ is a generator of $\mu_4$ and $T=(\mathbb Z/2) t\oplus (\mathbb Z/2) s$.

We treat a List II consisting of  11 cases, including all the minimal monodromies for $h=1,2$ (see Lemma \ref{sporadic-mon} and Remark \ref{rem:list_sporadic})
\begin{enumerate}
\item[(IV-2)] $(\epsilon, 0; s,s)$,  type (1;2,2)
 
 \item[(V-1)]$(\epsilon, t; t+s)$,  type (1;2)

\item[(V*-2)] $(\e,0,0,t; s,s)$, type (2;2,2)

\item[(V**-2)]  $(\e,0,0,t; t+s,t+s)$, type (2;2,2) 

\item[(V***-1)]  $(\e,t,0,t; t+s)$, type (2;2)
 
\item[(VI)] $(\epsilon, 0,  t,s)$,  type (2;-)

\item[(VI*)] $(\epsilon, t+s, t,0)$,  type (2;-)

  \item[(VI**)] $(\epsilon, t+s, t,s)$,  type (2;-)

\item[(VIII)] $(\epsilon, 0, t,0)$,  type (2;-)

\item[(VII)] $(\epsilon, 0, t+s,0,s,0)$,  type (3;-)

\item[(VII*)] $(\epsilon, 0, t,0,s,0)$,  type (3;-)

\end{enumerate}

\begin{theorem}\label{Magma_Sporadic_cases}
Let $G=(\mathbb Z/2)^2\rtimes \mu_4$, and assume that the  monodromy is one of List II. Then $\Aut_{\mathbb Z}(S)$ is trivial.
\end{theorem}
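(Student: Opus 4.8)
The plan is to combine the structural constraints already established for $\Aut_\ZZ(S)$ with an explicit homological computation for each monodromy in List II, reducing the general case to the minimal ones through the simplification process.

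First I would record the candidates. By part (2) of Theorem \ref{lem: act trivial 1} we have $\Aut_\ZZ(S)\subseteq Z(G)$, and in the sporadic case Lemma \ref{G} (see \eqref{sporadic}) gives $Z(G)=(\ZZ/2)\tfrac12(1+i)\times\mu_2\cong(\ZZ/2)^2$. Thus it suffices to show that none of the three nontrivial elements of $Z(G)$ is cohomologically trivial: the translation $\tau$ by $\tfrac12(1+i)$, the involution $\iota\colon z\mapsto -z$ generating $\mu_2$, and the product $\tau\iota$. Since an element fails to lie in $\Aut_\ZZ(S)$ as soon as it acts nontrivially on $H_1(S,\ZZ)$ (it need not be tested on all of $H^*(S,\ZZ)$), the goal reduces to exhibiting, for each candidate $g$, a nonzero contribution of $g$ to $H_1(S,\ZZ)$.

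Next I would reduce to List II. Lemma \ref{sporadic-mon} shows that every sporadic monodromy is equivalent to one obtained from a monodromy in List II by a sequence of simplifications, and Lemma \ref{simplify}, in its $G$-equivariant form used in the proof of Theorem \ref{lem: act trivial 1}(5), yields a $G$-equivariant splitting $H_1(C,\ZZ)=H_1(C^0,\ZZ)\oplus W$, hence $H_1(C,\ZZ)_G=H_1(C^0,\ZZ)_G\oplus W_G$. Consequently a commutator that is nonzero in $H_1(C^0,\ZZ)_G$ for the simplified (minimal) monodromy stays nonzero in $H_1(C,\ZZ)_G$ for the original one, so it is enough to treat the finitely many monodromies of List II. For the cases with $h\ge 2$ (the genus-$2$ and genus-$3$ entries) part (4) of Theorem \ref{lem: act trivial 1} already rules out the two non-translation candidates $\iota,\tau\iota$, since they would force $b_2(S)=2$ and hence $h=1$; there only the translation $\tau$ survives, and I would dispose of it by Criterion \ref{tips}(iv), computing the commutator of lifts to $\pi^{orb}$ of $\tau$ and of the generator $\e$ of $\mu_4$ and checking it is nonzero in $H_1(C,\ZZ)_G$. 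Using $(\e-1)\tfrac12(1+i)=-1\in\Lambda$ from \eqref{crucial}, this is the expected source of nontriviality. For the remaining cases $h=1$, namely (IV-2) and (V-1), all three candidates must be examined.

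The core of the argument, and the main obstacle, is the integral computation for the surviving candidates. Knowing that $g$ acts trivially on the image $H_1(C,\ZZ)_G\oplus H_1(E,\ZZ)_G$ and on the quotient $G^{ab}$ does \emph{not} imply triviality on the extension $H_1(S,\ZZ)$ (Remark \ref{lifts}); one must track the action on the lifts to $H_1(S,\ZZ)$ of a system of generators of $G^{ab}$. For this I would, for each monodromy in List II, produce a presentation of $\pi_C=\pi_1(C)$ via the Reidemeister--Schreier method, express the relevant commutators \eqref{comm} of the chosen lifts in this presentation, and evaluate their classes in $H_1(C,\ZZ)_G$ as dictated by Criterion \ref{tips}(iii)--(iv). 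Because $T=(\ZZ/2)^2$ is \emph{not} cyclic here, Lemma \ref{Tcyclic} is unavailable, so I cannot shortcut the $H^2$-analysis; fortunately, to prove triviality of $\Aut_\ZZ(S)$ it suffices to detect a nonzero action already on $H_1(S,\ZZ)$, which the commutator computation does. These presentations and commutator evaluations for the non-abelian group $(\ZZ/2)^2\rtimes\mu_4$ over a base of genus up to $3$ are too intricate to carry out reliably by hand, so I expect, exactly as in Theorem \ref{Magma}, to delegate them to a MAGMA script that verifies in every case of List II, and for every nontrivial $g\in Z(G)$, that the corresponding commutator is nonzero in $H_1(C,\ZZ)_G$. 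This yields that no nontrivial element of $Z(G)$ is cohomologically trivial, whence $\Aut_\ZZ(S)$ is trivial.
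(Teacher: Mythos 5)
Your plan is, in substance, the paper's own proof: reduce to $Z(G)\cong(\ZZ/2)^2$ via Theorem \ref{thm2}(2) and Lemma \ref{G}, test triviality of the action on $H_1(S,\ZZ)$ by the commutator criterion of Criterion \ref{tips} implemented through Reidemeister--Schreier in MAGMA, and invoke Theorem \ref{thm2}(4) to kill non-translations when $h\geq 2$. The only difference is bookkeeping: the paper runs the machine test on all of $Z(G)$ in every case of List II and then applies part (4) to the elements that survive, whereas you apply part (4) first and reserve the computation for the translation $\tau$ (when $h\geq 2$) and for all three candidates (when $h=1$). Both orderings are legitimate and rest on the same two ingredients.

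However, your closing sentence would fail if executed literally. You ask the MAGMA script to verify, ``in every case of List II, and for every nontrivial $g\in Z(G)$, that the corresponding commutator is nonzero in $H_1(C,\ZZ)_G$'' --- and this is false. In cases (VI*) and (VI**) exactly one nontrivial element, namely the non-translation $z\mapsto -z+t+s$, respectively $z\mapsto -z$, has all its commutators zero in $H_1(C,\ZZ)_G$, i.e.\ it acts trivially on $H_1(S,\ZZ)$ (which is even torsion free in these two cases), so no $H_1$-computation can eliminate it. It is eliminated only by the $H^2$-argument: it is a non-translation and $h=2$, so by Theorem \ref{thm2}(4) it acts nontrivially on $H^2(S,\ZZ)$. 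The same remark invalidates your opening reduction (``the goal reduces to exhibiting, for each candidate $g$, a nonzero contribution of $g$ to $H_1(S,\ZZ)$''). Your middle paragraph does route every $h\geq2$ non-translation through part (4), and with that organization the proof is complete and agrees with the paper's; just be aware that this detour through rational $H^2$ is not a labor-saving option but a genuine necessity in (VI*) and (VI**), so the blanket $H_1$-claims at the beginning and end of your proposal must be corrected to match the plan you actually laid out.
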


\begin{proof}
    The MAGMA script below shows that in all cases  there are no non-trivial elements in $\Aut_{\mathbb Q}(S)$ acting trivially on $H_1(S,\mathbb Z)$, except in Case (VI*) and Case (VI**).
    
    In Case (VI*) and Case (VI**)  the element $z\mapsto -z+t+s$ (respectively $z\mapsto -z$) is the unique non-trivial element in $\Aut_{\mathbb Q}(S)$ acting trivially on $H_1(S,\mathbb Z)$, but  it is  a non-translation and $h=2$, so it acts non-trivially on $H^2(S,\ZZ)$  by (4) of Theorem \ref{thm2}.
\end{proof}

The above examples, although not providing a full classification, suggest that it may be difficult to find a 
case where 
$\Aut_{\mathbb Z}(S) \cong (\ZZ/2)^2.$

\section{MAGMA script}

The script can be run at \url{http://magma.maths.usyd.edu.au/calc/}.\\

\begin{code_magma}
/* Input: i) the group: G;  ii) the monodromy images of pi_orb(B)-> G: mon;  
iii) the genus of the quotient curve B: h;

Output: the number of elements of Z(G)=Aut_Q(S), acting trivially on H_1(S,Z)*/

// First of all, given the monodromy images, 
// we construct the group pi^orb and the monodromy pi^orb-->>G

Orbi:=function(gr,mon, h)
F:=FreeGroup(#mon);  Rel:={}; G:=Id(F);
for i in {1..h} do  G:=G*(F.(2*i-1)^-1,F.(2*i)^-1); end for;
for i in {2*h+1..#mon} do G:=G*F.(i); Include(~Rel,F.(i)^(Order(mon[i])));   end for;
Include(~Rel,G); P:=quo<F|Rel>;
return P, hom<P->gr|mon>;
end function;

// MapProd computes given two maps f,g:A->B the map product
// induced by the product on B

MapProd:=function(map1,map2)
seq:=[];
A:=Domain(map1); 
B:=Codomain(map1);
if Category(A) eq GrpPC then 
	n:=NPCgens(A);
	else n:=NumberOfGenerators(A); 
end if;
for i in [1..n] do Append(~seq, map1(A.i)*map2(A.i)); end for;
return hom<A->B|seq>;
end function;

TrivialActionH1:=function(G,mon, h)
// First of all we construct the group pi^orb and the monodromy pi^orb-->>G

PiOrb,f:=Orbi(G,mon,h);

// We compute a set of generators U for ker(f)=pi_1(C),
 //using the Reidemeister-Schreier method

R:=[PiOrb!1]; 
for g in {g: g in G| g ne G!1} do Append(~R,g@@f); end for;  
// R is a set of representative of the  cosets of ker(f)=pi_1(C)<PiOrb
U:={};
for r in R do
for t in Generators(PiOrb) do 
	h:=r*t;
	if exists(k){s:s in R | f(s) eq f(h)} then
		Include(~U,h*k^-1);
end if; end for; end for;
	
Pi1:=sub<PiOrb| U>; // Pi_1 of C
H1,t:=AbelianQuotient(Pi1); //H_1(C,Z)

/* The group of coinvariants is H_1(C,Z) modulo the subgroup generated by 
{g u g^-1u^-1},
where u is a generator of H_1(C,Z) and g are lifts of generators of G.
We do it in 2 steps, so the presentation given by MAGMA is simpler */

U_new:={}; //the relations for the coinvariants
for u in U  do 
for g in Generators(G) do
	Include(~U_new,t((g@@f)*u*(g@@f)^-1*u^-1));
end for; end for;

H1G, s:=quo<H1|U_new>;//H_1(C,Z)_G 
triv:={G!1};
H:={g: g in Center(G)| g ne G!1}; 

for g in H do 
	l1:=g@@f;  act:={};
	for gen in G do
		l2:=gen@@f; Include(~act,s(t((l1,l2))));
	end for; 
	// check if g in Z(G)=Aut_Q(S), acts trivially on  H_1(S,Z)
	if #act eq 1 then Include(~triv,g); end if;
end for; 

return #sub<G|triv>;
end function;

/* Input: i) the group: G;  ii) the monodromy images of pi_orb(B)-> G: mon;  
iii) the genus of the quotient curve B: h;
iv) the monodromy images of pi_orb(E)-> G: monE. 
Output: the torsion subgroup of H_1(S,Z) */

Tors_H1S:=function(G,mon1, h1, mon2, h2)
T1,f1:=Orbi(G,mon1, h1);
T2,f2:=Orbi(G,mon2, h2);
T1xT2,inT,proT:=DirectProduct([T1,T2]);
GxG,inG:=DirectProduct(G,G);
Diag:=MapProd(inG[1],inG[2])(G);
f:=MapProd(proT[1]*f1*inG[1],proT[2]*f2*inG[2]);
HH:=Rewrite(T1xT2,Diag@@f); // This is the fundamental group of S=(CxE)/G
return TorsionSubgroup(AbelianQuotient(HH)); //Tors(H_1(S,Z))
end function;
\end{code_magma}

We use the previous script, to check, for which monodromies of List I and List II, which elements act non-trivially on $H_1(S,\ZZ)$. If there are non-trivial elements acting non-trivially on $H_1(S,\ZZ)$, we compute also the torsion subgroup of $H_1(S,\ZZ)$.

\begin{itemize}
    \item  $G=\mathbb Z/3 \times \mu_3$

\begin{code_magma}
G:=SmallGroup(9,2); 
// Z/3xZ/3 t=G.1=(1,0), e=G.2=(0,1) 
monE:=[G.2,G.1*G.2,G.1^2*G.2]; //[e, (t,e), (2t,e)]

mon:=[G.2,G!1,G.1,G.1^2];  h:=1; // I-1, BIG
TrivialActionH1(G,mon,h);
> 1

mon:=[G.2,G.1,G!1,G!1]; h:=2; // I-2
TrivialActionH1(G,mon,h);
> 9 // whole group acts trivially on H_1(S,Z)
Tors_H1S(G,mon,h,  monE,0);
> Abelian Group of order 1

mon:=[G.2,G!1,G.1,G!1]; h:=2; // II-2
TrivialActionH1(G,mon,h);
> 1

mon:=[G.2,G.1,G.1,G!1]; h:=2; // II-3 v1
TrivialActionH1(G,mon,h);
> 9 // whole group acts trivially on H_1(S,Z)
Tors_H1S(G,mon,h,  monE,0);
> Abelian Group of order 1

mon:=[G.2,G.1,G.1^2,G!1]; h:=2; // II-3 v2
TrivialActionH1(G,mon,h);
> 9 // whole group acts trivially on H_1(S,Z)
Tors_H1S(G,mon,h,  monE,0);
> Abelian Group of order 1
\end{code_magma}

\item $G=\mathbb Z/2 \times \mu_4$
\begin{code_magma}
G:=SmallGroup(8,2);
 // Z/2xZ/4  t=G.2=(1,0), e=G.1=(0,1)
 monE:=[G.1,G.1*G.2, G.1^2*G.2]; //[e, (t,e), (t,e^2)] 
 
mon:=[G.1,G!1,G.2,G.2]; h:=1; // I-1, BIG
TrivialActionH1(G,mon,h);
> 2 // G.3=G.1^2= e^2
Tors_H1S(G,mon,h,  monE,0);
>  Z/2 + Z/2

mon:=[G.1,G.2,G!1,G!1]; h:=2;  // I-2
TrivialActionH1(G,mon,h);
> 8 // whole group acts trivially on H_1(S,Z)
Tors_H1S(G,mon,h,  monE,0);
> Abelian Group of order 1

mon:=[G.1,G!1,G.2,G!1]; h:=2; // II-2
TrivialActionH1(G,mon,h);
> 2 // G.3=G.1^2= e^2
Tors_H1S(G,mon,h,  monE,0);
>  Z/2 

mon:=[G.1,G.2,G.2,G!1]; h:=2; // II-3
TrivialActionH1(G,mon,h);
> 8 // whole group acts trivially on H_1(S,Z)
Tors_H1S(G,mon,h,  monE,0);
> Abelian Group of order 1
\end{code_magma}

\item $G=(\mathbb Z/2)^2 \times \mu_2$
\begin{code_magma}
G:=SmallGroup(8,5); 
// (Z/2xZ/2)xZ/2  t=G.1=(1,0,0), s=G.2=(0,1,0), e=G.3=(0,0,1) 
monE:=[G.3,G.1*G.3,G.2*G.3,G.1*G.2*G.3];//[e, (t,e), (s,e), (t+s,e)]

mon:=[G.3,G!1,G.1,G.2,G.1*G.2]; h:=1; // I-1 BIG
TrivialActionH1(G,mon,h);
> 1

mon:=[G.3,G.1,G.2,G.2]; h:=1; // I-3 BIG
TrivialActionH1(G,mon,h);
> 1

mon:=[G.3,G.1,G.2,G!1]; h:=2; // II v1
TrivialActionH1(G,mon,h);
> 1

mon:=[G.3,G.1,G.2,G.1]; h:=2; // II v2
TrivialActionH1(G,mon,h);
> 1

mon:=[G.3,G!1,G.1,G.2]; h:=2; // II-1
TrivialActionH1(G,mon,h);
> 1

mon:=[G.3,G.1,G.1,G!1,G.2,G.2]; h:=2; // III v1
TrivialActionH1(G,mon,h);
> 1

mon:=[G.3,G!1,G.1,G!1,G.2,G.2]; h:=2;// III v2
TrivialActionH1(G,mon,h);
> 1
\end{code_magma}

\item $G=\mathbb Z/2 \times \mu_2$
\begin{code_magma}
G:=SmallGroup(4,2);
// Z/2xZ/2 t=G.1=(1,0), e=G.2=(0,1)
monE:=[G.2,G.2,G.1*G.2,G.1*G.2]; // [e,e,(t,e),(t,e)]

mon:=[G.2,G!1,G.1,G.1]; h:=1; // I-1
TrivialActionH1(G,mon,h);
> 1

mon:=[G.2,G.1,G!1,G!1]; h:=2; // I-2
TrivialActionH1(G,mon,h);
> 4 // whole group acts trivially on H_1(S,Z)
Tors_H1S(G,mon,h,  monE,0);
>  Z/2

mon:=[G.2,G!1,G.1,G!1]; h:=2; // II-2
TrivialActionH1(G,mon,h);
> 1

mon:=[G.2,G.1,G.1,G!1]; h:=2; // II-3
TrivialActionH1(G,mon,h);
> 4 // whole group acts trivially on H_1(S,Z)
Tors_H1S(G,mon,h,  monE,0);
>  Z/2
\end{code_magma}

\item $G=(\mathbb Z/2\mathbb Z)^2\rtimes \mu_4$

\begin{code_magma}
G:=SmallGroup(16,3); 
// epsilon= G.1, t=G.2, s=G.2*G.3, t+s=G.3, epsilon^2=G.4
monE:=[G.2*G.4,G.2*G.1,G.1]; //[(t,e^2),(t,e),e]

mon:=[G.1,G!1,G.2*G.3,G.2*G.3]; h:=1; // IV-2
TrivialActionH1(G,mon,h);
> 1

mon:=[G.1,G.2,G.3]; h:=1; // V-1
TrivialActionH1(G,mon,h);
> 1

mon:=[G.1,G!1,G!1,G.2,G.2*G.3,G.2*G.3]; h:=2; // V*-2
TrivialActionH1(G,mon,h);
> 1

mon:=[G.1,G!1,G!1,G.2,G.3,G.3]; h:=2; // V**-2
TrivialActionH1(G,mon,h);
> 1

mon:=[G.1,G.2,G!1,G.2,G.3]; h:=2; // V***-1
TrivialActionH1(G,mon,h);
> 1

mon:=[G.1,G!1,G.2,G.2*G.3]; h:=2; // VI
TrivialActionH1(G,mon,h);
> 1

mon:=[G.1,G.3,G.2,G!1]; h:=2; // VI*
TrivialActionH1(G,mon,h);
> 2 // G.3*G.4 = z-> -z+t+s acts trivially on H1(S,Z)
Tors_H1S(G,mon,h,  monE,0);
> Abelian Group of order 1

mon:=[G.1,G.3,G.2,G.2*G.3]; h:=2; // VI**
TrivialActionH1(G,mon,h);
>2 // G.4 = z-> -z acts trivially on H1(S,Z)
Tors_H1S(G,mon,h,  monE,0);
> Abelian Group of order 1

mon:=[G.1,G!1,G.2,G!1]; h:=2; // VIII
TrivialActionH1(G,mon,h);
> 1

mon:=[G.1,G!1,G.3,G!1, G.2*G.3,G!1]; h:=3; // VII
TrivialActionH1(G,mon,h);
> 1

mon:=[G.1,G!1,G.2,G!1, G.2*G.3,G!1]; h:=3; // VII*
TrivialActionH1(G,mon,h);
> 1 
\end{code_magma}

\end{itemize}

\end{document}